\newtheorem{thm}{Theorem}[section]
\newtheorem{theorem}{Theorem}[section]
\newtheorem{lem}[thm]{Lemma}
\newtheorem{prop}[thm]{Proposition}
\newtheorem{conj}[thm]{Conjecture}
\newtheorem{cor}[thm]{Corollary}
\newtheorem{corollary}[thm]{Corollary}
\newtheorem{exam}[thm]{Example}
\newtheorem{example}[thm]{Example}
\theoremstyle{definition}
\newtheorem{df}[thm]{Definition}
\newtheorem{definition}[thm]{Definition}
\theoremstyle{remark}
\newtheorem{rem}[thm]{Remark}
\numberwithin{equation}{section}
\title{Laurent Phenomenon Sequences}
\author{Joshua Alman, Cesar Cuenca, Jiaoyang Huang}
\address{Department of  Mathematics, 
MIT, Cambridge, MA, 02139}
\email{jalman@mit.edu}
\email{cuenca@mit.edu}
\email{jiaoyang@mit.edu}
\date{\today}
\begin{document}
\maketitle

\newcommand{\C}{\mathbb C}
\newcommand{\R}{\mathbb R}
\newcommand{\Z}{\mathbb Z}
\newcommand{\Q}{\mathbb Q}
\newcommand{\N}{\mathbb N}
\newcommand{\F}{\mathcal F}
\newcommand{\PP}{\mathcal P}

\begin{abstract}
In this paper, we undertake a systematic study of recurrences $x_{m+n}x_m = P(x_{m+1}, \ldots, x_{m+n-1})$ which exhibit the Laurent phenomenon. Some of the most famous among these sequences come from the Somos and the Gale-Robinson recurrences. Our approach is based on finding period 1 seeds of Laurent phenomenon algebras of Lam-Pylyavskyy. We completely classify polynomials $P$ that generate period 1 seeds in the cases of $n=2,3$ and of mutual binomial seeds. We also find several other interesting families of polynomials $P$ whose generated sequences exhibit the Laurent phenomenon. Our classification for binomial seeds is a direct generalization of a result by Fordy and Marsh, that employs a new combinatorial gadget we call a double quiver.
\end{abstract}

\section{Introduction}\label{intro}
The goal of this paper is to understand the Laurent phenomenon (\cite{fz}) appearing in Somos type recurrence relations, i.e., sequences $x_0,x_1,x_2,\ldots$ defined by recurrences of the form
\begin{align} \label{recurr}
x_mx_{m+n}=P(x_{m+1},x_{m+2},\ldots, x_{m+n-1}), \qquad m=0,1,2,\ldots
\end{align}
where $P$ is a polynomial. The prototypical example of such a sequence is the Somos-$n$ sequence, given by the recurrence
\begin{align*}
x_mx_{m+n}=\sum_{1 \leq i \leq \frac{n}{2}} x_{m+i}x_{m+n-i}
\end{align*}
Another key example is the Gale-Robinson sequence, which is given by the recurrence
\begin{align} \label{GR}
x_{m+n}x_{m}=\alpha x_{m+r}x_{m+n-r}+\beta
x_{m+p}x_{m+n-p}+\gamma x_{m+q}x_{m+n-q},
\end{align}
for some $p,q,r>0$ with $p+q+r=n$.

It is clear that terms of these sequences can be written as rational functions of the first $n$ terms. Remarkably, in a Somos-$n$ sequence, for $1 \leq n \leq 7$, or any Gale-Robinson sequence, each term can, in fact, be written as a Laurent polynomial in the first $n$ terms. This Laurent phenomenon for Gale-Robinson sequences was first proven in \cite{fz}. They initated a study of Somos-type recurrences related to cluster algebras.

In \cite{fm}, these types of sequences are studied as exchange relations in cluster mutation-periodic quivers. If mutating at a vertex $v$ in a quiver $Q$ results in a rotation of $Q$, then the exchange polynomial $P$ associated with $v$ yields a sequence of the form (\ref{recurr}) exhibiting the Laurent phenomenon. It is found in \cite{fm} that 2-term Gale Robinson sequences, which are of the form (\ref{GR}) with $\gamma=0$, are exactly the polynomials $P$ we can obtain in this way.

However, there are limitations to what the exchange polynomials can be in a cluster algebra; they need to be binomials which come from a quiver. Gale-Robinson sequences and other natural examples show that the Laurent phenomenon can hold when $P$ has other forms. Lam and Pylyavskyy introduced in \cite{lp} a generalization of cluster algebras that removes these constraints called Laurent phenomenon algebras, or LP algebras. They showed that LP algebras exhibit the Laurent phenomenon, which will imply that a period 1 LP algebra yields a Laurent phenomenon sequence.

In this paper, we study LP algebras arising from period 1 seeds to find more far-reaching results than \cite{fm}. We prove classification results for period 1 seeds when $n=2,3$. For $n=2$, we find, by comparing our seeds with the classification of Laurent phenomenon sequences by Speyer and Musiker \cite{musiker}, that a polynomial generates a period 1 LP seed if and only if it generates a Laurent phenomenon sequence. We also classify mutual binomial period 1 seeds. As exchange polynomials in a cluster algebra are all binomials, this result will generalize the classification theorem in \cite{fm} by taking advantage of the lessened constraints of LP algebras. In fact, our classification is described using a generalization of quivers that we introduce, called double quivers, which operates within the machinery of LP algebras. We also give large families of polynomials that generate period 1 seeds. For many of these, to the best of our knowledge, the Laurent phenomenon had not been proven. Similar to \cite[\S 9]{fm}, we also investigate conserved quantities, k-invariants and (multi)linearizations of these families, which yields insights into their integrability.

The remainder of the paper is organized as follows. In Section 2, we introduce the relevant notions of LP algebras. We give an algorithm that, given a polynomial $P$, finds the unique candidate for a period 1 LP algebra, and would show that (\ref{recurr}) has the Laurent phenomenon. We also link to our implementation of the algorithm in Sage. In Section 3, we summarize our results. In Sections 4 and 5, we prove our results about mutual binomial seeds and small $n$, respectively. In Section 6, we give the period 1 seeds corresponding to our families of examples. Finally, in Section 7 we investigate the conserved quantities and integrability of some of these recurrences.

\section{Laurent Phenomenon Algebras}\label{lpalgebras}

Before we state our main results, we introduce period 1 Laurent phenomenon algebra seeds and their important properties.

\subsection{Seeds and Mutations}
In this subsection, we define Laurent phenomenon (LP) algebras and related notions from \cite{lp}. 

Let $\F$ be the field of rational functions in $n$ independent variables over $\Q$.
A {\it{seed}} $t$ is a pair $\bf (x, P)$ where:

\begin{itemize}
    \item $\bf{x}$ $= \{x_0, \ldots, x_{n-1}\}$ is a trascendence basis for $\F$ over $\Q$.
	\item $\mathbf{P} = \{P_0, \ldots, P_{n-1}\}$ is a collection of polynomials in $\PP = \Z[x_0, \ldots, x_{n-1}]$ satisfying:
	
	(LP1) $P_i \in\PP$ is irreducible and is not divisible by any $x_j$.
	 
	(LP2) $P_i$ does not depend on $x_i$.
\end{itemize}

Equivalently, if we denote by $\PP_i$ the polynomials in $\PP$ that satisfy (LP1) and (LP2), then we say that a seed $\bf (x, P)$ consists of a collection of pairs $(x_i, P_i)$, $0\leq i\leq n-1$, such that $P_i\in\PP_i$ for all $i$.

Borrowing terminology from the theory of cluster algebras, the set $\{ x_0, \ldots, x_{n-1} \}$ is called a \emph{cluster}, each of $x_0, \ldots, x_{n-1}$ is called a \emph{cluster variable}, and the polynomials $P_0, \ldots, P_{n-1}$ are the associated \emph{exchange polynomials}. 

We now define mutation in LP algebras. For $k \in \{ 0, \ldots, n-1 \}$, we say that a seed $\bf(x', P')$ is obtained from $\bf (x, P)$ by {\it mutation at $k$}, which we denote $\mu_k(\mathbf{x}, \mathbf{P}) = (\mathbf{x'}, \mathbf{P'})$, if $t' = \bf(x', P')$ comes from $t = \bf(x, P)$ via the following sequence of steps:

\begin{enumerate}
\item Let $\mathcal{L}(t) = \Z[x_0 ^{\pm1}, \ldots, x_{n-1} ^{\pm1}]$ be the Laurent polynomial ring in the cluster variables. Define the \emph{exchange Laurent polynomials} $\{ \widehat{P}_0, \ldots, \widehat{P}_{n-1} \} \subset \mathcal{L}(t)$ to be the unique set of Laurent polynomials satisfying:

\begin{itemize}
\item For each $j \in \{0,\ldots, n-1\}$ there are $a_1, \ldots, a_{j-1},a_{j+1},\ldots,a_{n-1} \in \Z_{\leq 0}$ such that $\widehat{P}_j = x_1^{a_1} \ldots x_{j-1}^{a_{j-1}} x_{j+1}^{a_{j+1}}\ldots x_{n-1}^{a_{n-1}} P_j$

\item For each $i,j \in \{0,\ldots,n-1\}$ with $i \neq j$, if we let $\mathcal{L}_j(t) = \Z[x_0 ^{\pm1}, \ldots, x_{j-1}^{\pm 1}, x^{\pm 1}, x_{j+1}^{\pm 1}, \ldots x_{n-1} ^{\pm1}]$, then we have $\widehat{P}_i | _{x_j \leftarrow P_j/x} \in \mathcal{L}_j(t)$ and $\widehat{P}_i|_{x_j \leftarrow P_j/x}$ is not divisible by $P_j$ in $\mathcal{L}_j(t)$.
\end{itemize}

\item The new cluster $\mathbf{x'} = \{x'_0, \ldots, x'_{n-1}\}$ is given by:

$$x_i' = \begin{cases}
x_i &\text{if } i \neq k, \\
\widehat{P}_k/x_k &\text{if } i = k.
\end{cases}$$

\item Define the polynomial

$$G_j = P_j \left|_{x_k \leftarrow \frac{\widehat{P}_k|_{x_j \leftarrow 0}}{x_k'}.}\right. $$

\item Define $H_j$ to be the result of removing all common factors with $\widehat{P}_k|_{x_j \leftarrow 0}$ from $G_j$ (in the unique factorization domain $\Z[x_0, \ldots, x_{k-1}, x_{k+1}, \ldots, x_{j-1}, x_{j+1}, \ldots x_{n-1}]$).

\item Define the new exchange polynomial $P'_j = M_j H_j$, where $M_j$ is the unique Laurent monomial in $x'_0, \ldots, x'_{n-1}$ for which $M_j H_j$ is not divisible by any Laurent monomial.

\item The new seed is given by $\mu_k(\mathbf{x}, \mathbf{P}) = (\mathbf{x'}, \mathbf{P'}) = (\{x'_0, \ldots, x'_{n-1} \}, \{ P'_0, \ldots, P'_{n-1} \})$.
\end{enumerate}

\begin{rem}
If $P_j$ does not depend on $x_k$, then $G_j = H_j = P_j$ and $M_j = 1$, implying that $P_j' = P_j$.
\end{rem}

All the necessary existence and uniqueness conditions to show that the above mutation gives a unique valid seed can be found in \cite[\S 2]{lp}. We will often abuse notation and write:
$$\mu_0(\{x_0, \ldots, x_{n-1}\}, \{P_0, \ldots, P_{n-1}\}) = (\{x_1, \ldots, x_{n}\}, \{P'_1, \ldots, P'_{n}\}),$$
where $x_n = x'_0$ and $P_n = P'_0$, since $x'_i = x_i$ for all $0<i\leq n-1$. More generally, we will write:
$$\mu_i(\{x_i, \ldots, x_{n+i-1}\}, \{P_i, \ldots, P_{n+i-1}\}) = (\{x_{i+1}, \ldots, x_{n+i}\}, \{P'_{i+1}, \ldots, P'_{n+i}\}),$$

where $x_{n+i} = x_i'$ and $P_{n+i}' = P_i'$. For any seed $t$, the \emph{Laurent phenomenon algebra} $\mathcal{A}(t)$ is the commutative subring of $\Q$ generated by the cluster variables of the seeds that can be obtained after a finite sequence of mutations from $t$. We call $t$ the \emph{initial seed} of $\mathcal{A}(t)$. The importance of this definition has much to do with the next theorem.

\begin{theorem}{\cite[Theorem 5.1]{lp}} \label{LPLP}
Let $\mathcal{A}$ be a Laurent phenomenon algebra and $t = \bf(x, P)$ a seed of $\mathcal{A}$. If $\mathbf{x} = \{x_0, \ldots, x_{n-1}\}$, then any cluster variable of $\mathcal{A}$ belongs to the Laurent polynomial ring $\mathcal{L}(t) = \Z[x_0^{\pm 1}, \ldots, x_{n-1}^{\pm 1}]$.
\end{theorem}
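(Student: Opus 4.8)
The plan is to follow the strategy that Fomin and Zelevinsky used for the Laurent phenomenon in cluster algebras, in the form of their ``caterpillar lemma,'' adapted to the LP setting exactly as in \cite{lp}. Since every cluster variable of $\mathcal{A}$ is produced from $t$ by a finite sequence of mutations $\mu_{k_1} \cdots \mu_{k_\ell}$, I would induct on the length $\ell$ of such a sequence. The base cases are immediate: the variables of $t$ itself lie in $\mathcal{L}(t)$ by definition, and after a single mutation $\mu_k$ the only new variable is $x_k' = \widehat{P}_k / x_k$, which lies in $\mathcal{L}(t)$ because $\widehat{P}_k$ is by construction an element of $\mathcal{L}(t)$. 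The content of the theorem is therefore entirely in the inductive step.

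The inductive step is the caterpillar lemma. Rather than pushing a single mutation sequence forward, the efficient formulation fixes the target variable $z$ and a vertex $k$, and compares the expansions of $z$ in the seeds $t$, its neighbor $t_k = \mu_k(t)$, and a further neighbor obtained by one more mutation. Using that $\mu_k$ is an involution and that mutations at distinct vertices satisfy the appropriate commutation relations (both established in \cite[\S 2]{lp}), one reduces the claim for a length-$\ell$ sequence to the claim for shorter sequences together with a single local statement: if $z$ is a Laurent polynomial in the cluster of $t_k$, then the back-substitution $x_k' \leftarrow \widehat{P}_k/x_k$ returns a Laurent polynomial in $\mathbf{x}$.

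The main obstacle, and the technical heart of the argument, is controlling the denominators in this local statement. Writing $z$ as a Laurent polynomial in the cluster of $t_k$ and substituting $x_k' = \widehat{P}_k/x_k$ manifestly produces a Laurent polynomial in $x_0, \ldots, x_{n-1}$, except that powers of $\widehat{P}_k$ (equivalently, of the irreducible exchange polynomial $P_k$) may appear in the denominator, and one must show these cancel. This is precisely where the defining properties of LP mutation enter: condition (LP1) makes each $P_k$ irreducible, the non-divisibility clause in the definition of the exchange Laurent polynomials guarantees that $\widehat{P}_i|_{x_k \leftarrow P_k/x}$ is coprime to $P_k$, and the removal of common factors in the construction of $H_j$ (Step 4) ensures the new exchange polynomials carry no hidden factor of $P_k$. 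Assembling these coprimality facts in the unique factorization domain $\Z[x_0, \ldots, x_{n-1}]$ forces any power of $P_k$ occurring in the denominator to divide the numerator, so it cancels; this is the step I expect to require the most care.

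Finally, one checks that integrality of coefficients is preserved throughout. Every operation in the mutation rule --- clearing the monomials $M_j$, extracting $H_j$ by removing common factors, and the division $\widehat{P}_k/x_k$ --- stays within $\Z[x_0^{\pm1}, \ldots, x_{n-1}^{\pm1}]$ rather than merely $\Q[x_0^{\pm1},\ldots,x_{n-1}^{\pm1}]$, since each is an exact division in a UFD with unit or monomial cofactors. Combining the inductive step with the base case then shows that every cluster variable reachable from $t$ lies in $\mathcal{L}(t)$, completing the induction and the proof.
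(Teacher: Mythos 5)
This statement is not proved in the paper at all: it is quoted verbatim as \cite[Theorem 5.1]{lp}, and the authors simply defer to Lam--Pylyavskyy for its proof. So the only meaningful comparison is between your outline and the argument in \cite{lp}, which is indeed a caterpillar-lemma argument in the style of Fomin--Zelevinsky. You have correctly identified the strategy, but the way you describe the inductive step contains a genuine gap.

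The reduction you propose --- ``the claim for a length-$\ell$ sequence follows from shorter sequences together with the single local statement: if $z$ is a Laurent polynomial in the cluster of $t_k$, then back-substituting $x_k' \leftarrow \widehat{P}_k/x_k$ returns a Laurent polynomial in $\mathbf{x}$'' --- cannot work, because that local statement is false. Take $z = 1/x_k'$, a perfectly good Laurent polynomial in the cluster of $t_k$; back-substitution gives $x_k/\widehat{P}_k$, which is not Laurent in $\mathbf{x}$ unless $\widehat{P}_k$ is a monomial. If the local statement were true, the naive one-step induction would suffice and no caterpillar lemma would be needed at all. The actual content of the caterpillar lemma is precisely to avoid this: one fixes the target variable $z$, establishes by induction that it is \emph{simultaneously} Laurent in the clusters of several adjacent seeds near the head of the caterpillar, writes the resulting two or three expressions for $z$ over $\mathcal{L}(t)$, and then shows that the possible denominators arising from the different routes (powers of a cluster variable $x_k$ on one hand, powers of the irreducible exchange Laurent polynomial $\widehat{P}_k$ on the other) are coprime in the UFD $\Z[x_0,\ldots,x_{n-1}]$, which forces the denominator to be a unit. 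The coprimality facts you list (irreducibility from (LP1), the non-divisibility clause in the definition of $\widehat{P}_i$, the removal of common factors in forming $H_j$) are the right ingredients, but they must be applied to compare distinct expansions of the same $z$, not to a single back-substitution. A secondary inaccuracy: mutations at distinct vertices of an LP seed do not commute in general (only involutivity $\mu_k^2 = \mathrm{id}$ holds, \cite[Proposition 2.10]{lp}), and the caterpillar argument does not rely on any commutation relations, so that part of your reduction should be dropped as well.
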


\subsection{Period 1 Seeds}

In this paper, we are primarily interested in period 1 seeds that will be defined shortly.

\begin{definition}
For any polynomial $P \in \Q[x_{-1}, x_0, x_1, x_2, \ldots]$, the \emph{upshift} of $P$ is the polynomial $$Q = P|_{x_i \leftarrow x_{i+1} ~ \forall i} \in \Q[x_0, x_1, x_2, \ldots].$$
If $P$ does not depend on $x_{-1}$, then the \emph{downshift} is defined analogously: $$R = P|_{x_i \leftarrow x_{i-1} ~ \forall i} \in \Q[x_0, x_1, \ldots].$$
\end{definition}

\begin{example}
The upshift and downshift of $x_2x_3^2 + x_7 - 3x_9$
are $x_3x_4^2 + x_8 - 3x_{10}$ and $x_1x_2^2 + x_6 - 3x_8$, respectively.
\end{example}

\begin{definition}
Let $t = (\{ x_0, \ldots, x_{n-1}\}, \{P_0, \ldots, P_{n-1}\})$ be a seed and $\mu_0(t) = (\{ x_1, \ldots, x_{n}\}, \{P'_1, \ldots, P'_{n}\})$ be its mutation at $x_0$ ($x_n = x_0'$ and $P_n' = P_0'$).
Then $t$ is a {\em period 1 seed} if $P_i$ is the downshift of $P_{i+1}'$ for all $0 \leq i < n-1$, and $P_{n-1}$ is the downshift of $P'_0$ (or equivalently of $P_0$).
\end{definition}

Period 1 seeds are interesting in light of Theorem \ref{LPLP}, as they provide the machinery to prove that some recurrence sequences satisfy the Laurent phenomenon:

\begin{corollary} \label{laur}
Let $P \in \Z[x_1, \ldots, x_{n-1}]$ be any irreducible polynomial, not divisible by any $x_j$. If there exists a period 1 seed $t = (\{x_0, \ldots, x_{n-1} \}, \{ P_0, \ldots, P_{n-1}\})$ with $P_0 = P$, then the sequence $\{x_i\}_{i \geq 0}$ of rational functions of $x_0, \ldots, x_{n-1}$, defined by
$$x_{m+n} = \frac{P(x_{m+1}, \ldots, x_{m+n-1})}{x_m} \textrm{ for all } m\geq 0,$$
consists entirely of Laurent polynomials.
\end{corollary}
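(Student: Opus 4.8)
The plan is to connect the recurrence in the statement directly to the mutation dynamics of the period 1 seed $t$, and then invoke the Laurent phenomenon for LP algebras (Theorem \ref{LPLP}). The key observation is that a period 1 seed is designed precisely so that iterated mutation at the first index reproduces the same seed, shifted by one, so that the cluster variables produced by mutation are exactly the terms $x_n, x_{n+1}, x_{n+2}, \ldots$ of the recurrence.

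First I would unwind the definition of mutation at $x_0$ applied to $t = (\{x_0, \ldots, x_{n-1}\}, \{P_0, \ldots, P_{n-1}\})$. By the hypothesis that $P = P_0$ is irreducible and not divisible by any $x_j$ (so it satisfies (LP1), and (LP2) holds since $P_0 \in \Z[x_1, \ldots, x_{n-1}]$ does not involve $x_0$), the exchange Laurent polynomial $\widehat{P}_0$ agrees with $P_0$ up to a Laurent monomial in $x_1, \ldots, x_{n-1}$ with nonpositive exponents. I would argue that, since $P_0$ is not divisible by any $x_j$ and the recurrence is the ``bare'' form $x_0 x_n = P_0(x_1, \ldots, x_{n-1})$, in fact $\widehat{P}_0 = P_0$, so that mutation produces the new cluster variable $x_0' = \widehat{P}_0/x_0 = P_0(x_1, \ldots, x_{n-1})/x_0$. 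Setting $x_n := x_0'$, this is exactly the $m=0$ instance of the recurrence $x_n = P(x_1, \ldots, x_{n-1})/x_0$.

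Next I would use the period 1 condition to show the seed repeats. By definition of a period 1 seed, $P_i$ is the downshift of $P_{i+1}'$ for $0 \le i < n-1$ and $P_{n-1}$ is the downshift of $P_0'$; equivalently, each $P_{i+1}'$ is the upshift of $P_i$, so the mutated seed $\mu_0(t) = (\{x_1, \ldots, x_n\}, \{P_1', \ldots, P_n'\})$ is literally the index-shift (upshift) of the original seed $t$. Thus applying $\mu_i$ repeatedly, I obtain a whole chain of seeds, the $m$-th of which is $(\{x_{m}, \ldots, x_{m+n-1}\}, \{P_{m}, \ldots, P_{m+n-1}\})$ with exchange polynomials that are upshifts of the originals; the new cluster variable created at each step is $x_{m+n} = P(x_{m+1}, \ldots, x_{m+n-1})/x_{m}$, matching the recurrence for every $m \ge 0$. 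This is the induction that identifies the sequence $\{x_i\}$ with the cluster variables appearing along this particular mutation path in the LP algebra $\mathcal{A}(t)$.

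Finally, since every $x_{m+n}$ is a cluster variable of $\mathcal{A}(t)$ obtained after finitely many mutations from the initial seed $t$ whose cluster is $\{x_0, \ldots, x_{n-1}\}$, Theorem \ref{LPLP} immediately gives that each $x_{m+n} \in \mathcal{L}(t) = \Z[x_0^{\pm 1}, \ldots, x_{n-1}^{\pm 1}]$, i.e. is a Laurent polynomial in the initial terms. The main obstacle I anticipate is the careful bookkeeping in the second step: one must verify that $\widehat{P}_0 = P_0$ and, more delicately, that the period 1 condition genuinely forces the \emph{entire} mutated seed (all $n$ exchange polynomials, not merely the newly created one) to be the upshift of $t$, so that the induction closes and the mutation path stays ``period 1'' at every stage rather than drifting after the first step. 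Verifying that the upshift/downshift matching in the definition is exactly the compatibility needed for step (3)--(5) of the mutation to reproduce shifted copies of the $P_j$ is where the bulk of the routine but necessary checking lies.
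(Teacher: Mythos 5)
Your proposal takes essentially the same route as the paper, whose entire proof is the one-line observation that the sequence $\{x_i\}_{i\geq 0}$ consists of the cluster variables obtained by applying the mutations $\mu_0, \mu_1, \ldots$ in order to $t$, followed by an appeal to Theorem \ref{LPLP}. The obstacle you flag---that one needs $\widehat{P}_0 = P_0$ (and its analogues at every later step) for the newly created cluster variable to coincide exactly with $P(x_{m+1},\ldots,x_{m+n-1})/x_m$---is genuine, but the paper's proof of this corollary does not address it either; that issue is deferred to Lemma \ref{hat} and Conjecture \ref{hatconj}, and your proposed justification of it from irreducibility and non-divisibility by the $x_j$ alone would not suffice.
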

\begin{proof}
The sequence $\{x_i\}_{i \geq 0}$ consists of the cluster variables we get by applying the mutations $\mu_0, \mu_1, \ldots$ in order to $t$.
\end{proof}

\begin{example}
For $n = 3$, the polynomial $P = x_1x_2+1$ is in the period 1 seed
$\{(x_0, x_1x_2 + 1), (x_1, x_0 + x_2), (x_2, x_0x_1 + 1)\}$ and generates the sequence
\begin{align*}&x_0, x_1, x_2, \frac{x_1x_2 + 1}{x_0}, \frac{x_1x_2^2 + x_0 + x_2}{x_0x_1},
\frac{(x_1x_2 + 1)(x_1x_2^2 + x_0 + x_2) + x_0^2x_1}{x_0^2x_1},\\
&\frac{(x_1x_2^2 + x_0 + x_2)^2 + x_0^2x_1(x_0 + x_2)}{x_0^2x_1^2x_2}, \ldots \end{align*}
\end{example}

\begin{corollary}
If $P \in \Z[x_1, \ldots, x_{n-1}]$ satisfies the conditions in Corollary \ref{laur}, then the sequence $\{x_i\}_{i \geq 0}$ defined by
$$x_{m+n} = \frac{P(x_{m+1}, \ldots, x_{m+n-1})}{x_m} \textrm{ for all $m\geq 0$},$$
with initial values $x_i=1$, for all $0\leq i<n$,
consists entirely of integers. If the coefficients of $P$ are positive, then the sequence consists entirely of positive integers.
\end{corollary}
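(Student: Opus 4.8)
The plan is to deduce the final corollary directly from Corollary~\ref{laur} by specializing the Laurent-polynomial conclusion to the all-ones initial data. First I would invoke Corollary~\ref{laur}: since $P \in \Z[x_1, \ldots, x_{n-1}]$ satisfies its hypotheses (irreducible, not divisible by any $x_j$, and sitting in a period~1 seed with $P_0 = P$), every term $x_m$ of the sequence defined by $x_{m+n} = P(x_{m+1}, \ldots, x_{m+n-1})/x_m$ is a Laurent polynomial in $x_0, \ldots, x_{n-1}$ with \emph{integer} coefficients, i.e. $x_m \in \Z[x_0^{\pm 1}, \ldots, x_{n-1}^{\pm 1}]$. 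The key observation is that specializing each variable $x_i \leftarrow 1$ is a ring homomorphism $\Z[x_0^{\pm 1}, \ldots, x_{n-1}^{\pm 1}] \to \Z$, so the image of any integer Laurent polynomial under this evaluation is an integer.

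Next I would verify that this specialization is compatible with the recurrence, so that evaluating the abstract cluster variables at $x_i = 1$ actually produces the integer sequence generated by the same recurrence with initial data $x_i = 1$. Because the evaluation map is a ring homomorphism and the recurrence $x_{m+n} x_m = P(x_{m+1}, \ldots, x_{m+n-1})$ is a polynomial identity among the cluster variables (valid in the field $\F$), applying the homomorphism term by term shows that the evaluated sequence satisfies the same recurrence with $x_0 = \cdots = x_{n-1} = 1$. Here one must note that the denominators never vanish under the specialization: each $x_m$ is already a genuine Laurent polynomial by Corollary~\ref{laur}, so its value at the all-ones point is well defined and no division by zero can occur. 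Thus the numerical sequence is exactly the evaluation of the Laurent-polynomial sequence, and every term lies in $\Z$.

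For the positivity statement, I would argue that if all coefficients of $P$ are positive, then by induction every $x_m$, as a Laurent polynomial, has positive (hence nonnegative) integer coefficients: the numerator $P(x_{m+1}, \ldots, x_{m+n-1})$ is a positive-coefficient combination of the preceding terms, and dividing by $x_m$ merely shifts exponents without introducing signs, since we already know the quotient is a Laurent polynomial. Evaluating a Laurent polynomial with positive coefficients at the all-ones point gives a strictly positive integer (the sum of its coefficients), so the sequence consists of positive integers.

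The only genuinely delicate point is the second paragraph: one must be careful that the evaluation-at-one map interacts correctly with the \emph{a priori} non-polynomial expressions appearing in the recurrence. The recurrence is stated as $x_{m+n} = P(\ldots)/x_m$, and naively substituting $x_i = 1$ into such a quotient requires knowing the quotient is defined at that point. The resolution, which I expect to be the main thing to state carefully rather than a deep obstacle, is precisely that Corollary~\ref{laur} has \emph{already} established each $x_m$ is a Laurent polynomial, so the evaluation is legitimate and commutes with the recurrence; integrality and positivity then follow formally from properties of the evaluation homomorphism.
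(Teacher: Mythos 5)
The paper states this corollary without proof, treating it as an immediate consequence of Corollary~\ref{laur}; your first two paragraphs (integrality via the evaluation homomorphism $\Z[x_0^{\pm 1},\ldots,x_{n-1}^{\pm 1}]\to\Z$ at the all-ones point, plus the observation that this evaluation commutes with the polynomial identity $x_{m+n}x_m=P(x_{m+1},\ldots,x_{m+n-1})$) are exactly the intended argument and are correct.

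Your positivity argument, however, contains a genuine error. You claim that each $x_m$, as a Laurent polynomial, has positive coefficients, justified by saying that ``dividing by $x_m$ merely shifts exponents without introducing signs.'' That would be true if $x_m$ were a Laurent \emph{monomial}, but it is not: already the first new term in the paper's own example is $x_3=(x_1x_2+1)/x_0$, a two-term Laurent polynomial. Dividing one positive-coefficient (Laurent) polynomial by another, even when the quotient is known to be a Laurent polynomial, can introduce negative coefficients --- compare $(x^3+1)/(x+1)=x^2-x+1$. Positivity of the Laurent coefficients of cluster-type variables is in fact a deep theorem in the cluster algebra setting and is not known for general LP algebras, so this intermediate claim cannot be waved through. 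Fortunately the corollary does not need it: argue instead by induction on the \emph{numerical values}. With $x_0=\cdots=x_{n-1}=1>0$ and $P$ a nonzero polynomial with positive coefficients, $P$ evaluated at positive arguments is positive, so $x_{m+n}=P(x_{m+1},\ldots,x_{m+n-1})/x_m$ is a quotient of two positive rationals, hence positive; combined with the integrality you already established, every term is a positive integer. (This also disposes of the nonvanishing-denominator issue in the positive case; in the general integer-coefficient case that issue is implicitly assumed away by the statement, as you correctly flag.)
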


\begin{definition}
For a period 1 seed $t = (\{x_0, \ldots, x_{n-1} \}, \{ P_0, \ldots, P_{n-1}\})$, we will say that $t$ is the period 1 seed {\em generated by} $P_0$, or that $P_0$ {\em generates} $t$. We call $P_0$ a {\em period 1 polynomial}, or say that $P_0$ is {\em 1 periodic}.
\end{definition}

At this point, it is worth noting that the converse of Corollary \ref{laur} is not true. For instance, consider $n = 3$ and $P(x_1, x_2) = x_1 + x_2 + 1$. After reading the next section, the reader should be able to easily confirm that $P$ does not generate a period 1 seed. However, the sequence generated by $P$ is periodic and satisfies the Laurent phenomenon:
\begin{align*}&x_1, x_2, x_3, \frac{x_2 + x_3 + 1}{x_1}, \frac{x_1 x_3 + x_1 + x_2 + x_3 + 1}{x_0 x_1}, \frac{x_1 x_2 + x_2^2 + x_1 x_3 + x_2 x_3 + x_1 +2 x_2 + x_3 + 1}{x_1 x_2 x_3}, \\
&\frac{x_1 x_3 + x_1 + x_2 + x_3 + 1}{x_0 x_1}, \frac{x_2 + x_3 + 1}{x_1}, x_1, x_2, x_3, \ldots \end{align*}

\subsection{Generation of period 1 seeds}

In this section, we propose a method for obtaining period 1 seeds generated by particular polynomials, using a variant of the method in \cite{fz}.

Given a polynomial $P\in\PP_0 = \Z[x_1, \ldots, x_{n-1}]$, we define a map $\tau = \tau_{P} : \PP \to \PP$, which takes polynomials in
$\PP_i$ to polynomials in $\PP_{i-1}$, for all $i > 0$.
If $Q\in\PP_i$, then $\tau_P(Q)\in\PP_{i-1}$ is computed according to the following algorithm:

\begin{enumerate}
    \item Let $\displaystyle G = G(x_1, \ldots, \widehat{x}_i, \ldots, x_i) =
	Q\left|_{x_0 \leftarrow \frac{P|_{x_i = 0}}{x_{n+1}}} \right. \in\PP[x_{n+1}^{\pm 1}]$.

	\item If $d$ is the factor of $G$ shared with $P\big|_{x_i = 0}$ i.e., $d = \gcd(G, (P|_{x_i = 0})^k)$ in $\Z[x_1, \ldots, \widehat{x}_i, \ldots, x_{n-1}]$ for some sufficiently large $k\in\N$, then let $H = G/d$.
	\item Finally, define $\tau_P(Q)$ to be the downshift of $MH$, where $M \in \mathcal{L}(x_1, \ldots, \widehat{x}_i, \ldots, x_{n})$
	is such that $MH \in\Z[x_1, \ldots, \widehat{x}_i, \ldots, x_{n}]$ and $MH$ is not divisible by any $x_j$.
\end{enumerate}

\begin{rem}
If $Q$ does not depend on $x_0$, then $H = G = Q$, $M = 1$ and so $\tau_P(Q)$ is simply the downshift of $Q$.
\end{rem}

It is not immediately clear that $\tau$ maps polynomials from $\PP_{i}$ to $\PP_{i-1}$ for all $i$.
The two propositions below show that this is the case.

\begin{prop}\label{nodepend}
If $Q\in\PP_{i}$, then $R = \tau_{P}(Q)$ does not depend on $x_{i-1}$.
\end{prop}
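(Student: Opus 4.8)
The plan is to reduce the statement to a single observation: the variable $x_i$ is never introduced at any stage of the algorithm computing $\tau_P(Q)$, and the downshift in Step 3 converts the absence of $x_i$ into the absence of $x_{i-1}$. Concretely, since $R = \tau_P(Q)$ is by definition the downshift of $MH$, and the downshift substitutes $x_j \leftarrow x_{j-1}$ for every $j$ (in particular $x_i \leftarrow x_{i-1}$), it suffices to prove that $MH$ does not depend on $x_i$. I would then verify this by walking through the three steps of the algorithm and checking at each one that $x_i$ cannot appear.

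First I would handle Step 1. By hypothesis $Q \in \PP_i$, so $Q$ does not depend on $x_i$; that is, $Q$ is a polynomial in $x_0, x_1, \ldots, \widehat{x}_i, \ldots, x_{n-1}$. Likewise $P|_{x_i = 0}$ does not depend on $x_i$ by construction. Hence the substitution $x_0 \leftarrow (P|_{x_i=0})/x_{n+1}$ replaces $x_0$ by a rational expression free of $x_i$, and since $Q$ itself contains no $x_i$, the resulting $G$ is free of $x_i$ as well. Note that $i \in \{1, \ldots, n-1\}$ because $i > 0$ and $Q \in \PP_i$, so $x_i$ is genuinely distinct from the auxiliary variable $x_{n+1}$ (and from $x_n$); this is what guarantees that the denominator introduced by the substitution never collides with $x_i$.

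Next, in Step 2, $d = \gcd(G, (P|_{x_i=0})^k)$ is computed in $\Z[x_1, \ldots, \widehat{x}_i, \ldots, x_{n-1}]$, a ring that already excludes $x_i$, and both arguments of the gcd are free of $x_i$; therefore $d$ and $H = G/d$ are free of $x_i$ too. Finally, in Step 3, the monomial $M$ is taken in $\mathcal{L}(x_1, \ldots, \widehat{x}_i, \ldots, x_n)$, which by definition does not involve $x_i$. Consequently $MH$ involves no $x_i$, and its downshift $R$ involves no $x_{i-1}$, as desired.

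The one point that deserves care — and the place where I expect the only real content to lie — is Step 3: I must be sure that the prescribed $M$ can indeed be chosen to avoid $x_i$, i.e.\ that such a unique monomial exists in the restricted Laurent ring. The role of $M$ is solely to clear the denominator coming from the $x_{n+1}$ (equivalently $x_n$) substitution and to strip off any remaining monomial factors so that $MH$ becomes a monomial-primitive polynomial. Since $H$ is already free of $x_i$, it has no $x_i$ in its denominator to clear and no $x_i$ monomial factor to remove; introducing a power of $x_i$ into $M$ would only create divisibility by $x_i$, contradicting the requirement that $MH$ be divisible by no $x_j$. Hence the unique $M$ demanded by the algorithm genuinely lies in $\mathcal{L}(x_1, \ldots, \widehat{x}_i, \ldots, x_n)$, completing the argument. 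The remaining conditions needed for $R \in \PP_{i-1}$ (irreducibility and non-divisibility by the $x_j$) are not claimed here and are presumably addressed in the companion proposition.
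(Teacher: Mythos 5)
Your proof is correct and follows essentially the same route as the paper's: observe that $Q$, $P|_{x_i=0}$, $H$, and $M$ are all free of $x_i$, so $MH$ is free of $x_i$ and its downshift is free of $x_{i-1}$. Your treatment is somewhat more detailed (in particular the justification that the adjusting monomial $M$ can be chosen without $x_i$), but the underlying argument is the same.
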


\begin{proof}
If $P_i$ does not depend on $x_1$, the statement follows because $P_{i-1} = \tau(P_i)$ is the downshift of $P_i$.
If $P_i$ depends on $x_1$, then in the computation of $\tau(P_i)$, we define $G$ as
the Laurent polynomial resulting from replacing $x_0$ in $Q$ by an expression wherein we made the substitution $x_i = 0$.
In particular, $G$ does not depend on $x_i$. Then, $H$ does not contain $x_i$ and neither does $M$, by definition.
Hence, $P_{i-1} = \tau(P_i)$, which is the downshift of $MH$, does not contain $x_{i-1}$.
\end{proof}

\begin{prop}\label{irreducible}
If $Q\in\PP_i$, then $R = \tau_P(Q)$ is irreducible in $\PP$ and is not divisible by any of the $x_j$.
\end{prop}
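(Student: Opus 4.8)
The plan is to peel off the final downshift first: the downshift is just a relabeling isomorphism of polynomial rings, so it preserves both irreducibility and non-divisibility by variables, and it suffices to show that $MH$ (the polynomial produced in step (3) before downshifting) is irreducible in $\PP$ and divisible by no variable. The second property is built into the definition of $M$, so the real content is irreducibility. First I would dispose of the trivial case: if $e := \deg_{x_0} Q = 0$, then $Q$ does not depend on $x_0$ and the Remark gives that $\tau_P(Q)$ is the downshift of $Q$, which is irreducible; so assume $e \geq 1$. Throughout set $\bar P = P|_{x_i = 0}$, write $t = x_{n+1}$, and let $y$ denote the remaining variables $x_1, \ldots, \widehat{x_i}, \ldots, x_{n-1}$, so that $Q \in \Z[y][x_0]$, $\bar P \in \Z[y]$, and $G = Q|_{x_0 \leftarrow \bar P/t} \in \Z[y][t^{\pm 1}]$.

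The key observation is that the substitution $x_0 \mapsto \bar P/t$ is invertible. Since $\bar P$ is a unit of $\Q(y)$, the assignment $x_0 \mapsto \bar P\,t^{-1}$ extends to an isomorphism of one-variable Laurent polynomial rings $\phi \colon \Q(y)[x_0^{\pm 1}] \xrightarrow{\ \sim\ } \Q(y)[t^{\pm 1}]$ over the field $\Q(y)$, with inverse $t \mapsto \bar P\,x_0^{-1}$. Next I would upgrade the irreducibility of $Q$ to this setting: because $Q$ is irreducible over $\Z$ and has positive $x_0$-degree, it is primitive as a polynomial in $x_0$ over $\Z[y]$, so Gauss's lemma makes $Q$ irreducible in $\Q(y)[x_0]$; since $Q$ is not divisible by $x_0$ (its constant-in-$x_0$ term is nonzero by (LP1)), it remains prime after inverting $x_0$, i.e. in $\Q(y)[x_0^{\pm 1}]$. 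Applying $\phi$ then shows that $G = \phi(Q)$ is prime in $\Q(y)[t^{\pm 1}]$.

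It remains to transfer this back to the integral picture and match it with the factors stripped off by the algorithm. Clearing denominators, $\widehat G := t^e G = \sum_{j=0}^{e} Q_j\,\bar P^{\,j}\,t^{\,e-j} \in \Z[y][t]$, where $Q = \sum_j Q_j x_0^j$. Since $\widehat G$ is associate in $\Q(y)[t^{\pm 1}]$ to the prime $G$, in the UFD $\Z[y][t]$ its factorization consists of a power $t^a$, its $t$-content $c = \gcd_j(Q_j \bar P^{\,j}) \in \Z[y]$, and exactly one irreducible factor $R_0$ that genuinely involves $t$ and is coprime to $t$, occurring with multiplicity one; that is, $\widehat G = \pm\, c\, t^a\, R_0$. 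The crucial point is that every irreducible factor of $c$ divides $\bar P$: such a factor divides $Q_0$ and every $Q_j \bar P^{\,j}$, so if it did not divide $\bar P$ it would divide all the $Q_j$, contradicting the primitivity of $Q$. Hence $c \mid \bar P^{\,k}$ for large $k$, which identifies $c$ with the factor $d = \gcd(G, \bar P^{\,k})$ of step (2) (note $t$ is a unit and $R_0$ contributes no $\Z[y]$-factor). Therefore $H = G/d$ equals $R_0$ up to a power of $t$ and a sign, and since $R_0$ is irreducible, of positive degree in $t$, and coprime to $t$, it is divisible by no variable; thus the normalization $MH$ of step (3) equals $\pm R_0$. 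Downshifting gives that $R = \tau_P(Q)$ is irreducible and divisible by no variable.

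The hard part will be the content computation in the last paragraph: it is precisely what guarantees that the factor $d$ removed in step (2) consists of exactly the spurious $\bar P$-factors and nothing essential, leaving a single surviving irreducible $R_0$. The other point requiring care is the promotion of the irreducibility of $Q$ from $\Z[y][x_0]$ up to $\Q(y)[x_0^{\pm 1}]$, since both the passage to the fraction field (Gauss's lemma) and the inversion of $x_0$ (localization at the prime $x_0$) must be checked to preserve primality before $\phi$ can be applied.
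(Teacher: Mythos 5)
Your proof is correct and follows essentially the same route as the paper's: both rest on the invertibility of the substitution $x_0 \mapsto P|_{x_i=0}/x_{n}$, and the paper's two cases --- a nonunit factor of $H$ independent of $x_{n}$ would divide every $f_k$ and hence $Q$, while a factor depending on $x_{n}$ pulls back under the inverse substitution to a proper factor of $Q$ --- correspond exactly to your content computation and your isomorphism $\phi$. Your write-up is more careful than the paper's (Gauss's lemma, localization at $x_0$, and the identification of the stripped factor $d$ with the $t$-content of $\widehat G$ are all left implicit there), but the underlying idea is the same.
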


\begin{proof}
From the definition of $\tau$, it is clear that $R = \tau(Q)$ is not divisible by any $x_j$. It then suffices to show $R$ is irreducible.
This is clear if $Q$ does not depend on $x_0$, so assume $Q$ depends on $x_0$.
Write 
\[ Q = \sum_k{f_k x_0^k}, \]
where $f_k\in\Z[x_1, \ldots, \widehat{x}_i, \ldots, x_{n-1}]$ for all $k$.
Then,
\[ G = Q\big|_{x_0\leftarrow \frac{P|_{x_i = 0}}{x_{n}}} =
\sum_k f_k \left(\frac{P|_{x_i = 0}}{x_{n}}\right)^k,\]
and $H$ is $G$ divided by all common factors it shares with $P_0|_{x_i\leftarrow 0}$. Finally, $R$ is the downshift of $MH$ for some Laurent monomial $M$.
As $M$ is a unit in $\PP$, it will suffice to show $H$ is irreducible.
Let $d$ be a nonunit factor of $H$. From the definition of $H$, $d$ is not a factor of $P|_{x_i = 0}$.

If $d$ is independent of $x_{n}$ then $d \mid f_k$ for all $k$, which implies $d \mid Q$, contradicting the irreducibility of $Q$.

If $d$ depends on $x_{n}$, write $d = d(x_{n})$, so,
\[ d\left(\frac{P_0|_{x_i\leftarrow 0}}{x_{n}}\right) \text{ divides } G\left(\frac{P_0|_{x_i\leftarrow 0}}{x_{n}}\right)=Q(x_{n}). \]
This again contradicts that $Q$ is irreducible.
\end{proof}

\vspace{.2in}

Given an irreducible polynomial $P = P(x_1, \ldots, x_{n-1})\in\PP$, we generate a seed $\bf (x, P)$ by letting
$P_0 = P$, $P_{n-1}$ be the downshift of $P$, and recursively defining $P_i = \tau_P(P_{i+1})$ for $i = n-1, n-2, \ldots, 1$.
From Propositions \ref{nodepend} and \ref{irreducible}, it is clear that $\bf (x, P)$ is a valid seed.
For example, for $n = 3$, the polynomial $P = x_1x_2 + x_3^2$ generates the seed
$\{(x_0, x_1x_2 + x_3^2), (x_1, x_2^3 + x_0x_3^2), (x_2, x_0^2 + x_1x_3), (x_3, x_0x_1 + x_2^2)\}$.
The following proposition gives a sufficient condition for asserting that $\bf (x, P)$ is a period 1 seed.

\begin{prop}\label{period1}
Let $\widehat{P}_0$ be the exchange Laurent polynomial of $P_0$ for the generated seed $\bf (x, P)$.
If $P_0 = \tau_P(P_1)$ and $\widehat{P}_0 = P_0 = P(x_1, \ldots, x_{n-1})$, then $\bf (x, P)$ is a period 1 seed.
In particular, $P$ generates a Laurent phenomenon sequence.
\end{prop}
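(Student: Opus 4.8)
The plan is to verify directly the two defining conditions of a period 1 seed, by identifying the output of the mutation $\mu_0$ with the map $\tau_P$ used to build the seed. Write $\mu_0(\mathbf{x},\mathbf{P}) = (\{x_1,\dots,x_n\},\{P'_1,\dots,P'_n\})$ with $x_n = x'_0 = \widehat{P}_0/x_0$. First I would record the easy boundary case. Since $P_0 = P \in \Z[x_1,\dots,x_{n-1}]$ does not depend on $x_0$, the Remark following the mutation definition gives $P'_0 = P_0 = P$; hence the downshift of $P'_0$ equals the downshift of $P$, which by construction is exactly $P_{n-1}$. This disposes of the second period 1 condition, that $P_{n-1}$ be the downshift of $P'_0$.

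The heart of the argument is a term-by-term comparison of Steps 3--5 of the mutation at $k=0$ with the three steps of the algorithm defining $\tau_P$. Fix $j \in \{1,\dots,n-1\}$. By Step 3, $G_j = P_j|_{x_0 \leftarrow \widehat{P}_0|_{x_j\leftarrow 0}/x'_0}$. Here I would invoke the hypothesis $\widehat{P}_0 = P_0 = P(x_1,\dots,x_{n-1})$, which forces $\widehat{P}_0|_{x_j \leftarrow 0} = P|_{x_j = 0}$ and $x'_0 = x_n$, so that $G_j$ coincides with the Laurent polynomial $G$ produced in the first step of computing $\tau_P(P_j)$. The removal of common factors with $\widehat{P}_0|_{x_j\leftarrow 0} = P|_{x_j=0}$ in Step 4 is literally the passage from $G$ to $H$ in the second step of $\tau_P$, and the normalization by the Laurent monomial $M_j$ in Step 5 matches the normalization by $M$ in the third step. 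Consequently $P'_j = M_j H_j$ equals the polynomial $MH$, and since $\tau_P(P_j)$ is by definition the downshift of $MH$, we conclude that $\tau_P(P_j)$ is the downshift of $P'_j$ for every $j \in \{1,\dots,n-1\}$.

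With this identity in hand the first period 1 condition is immediate. The seed is constructed so that $P_{n-1}$ is the downshift of $P$ and $P_i = \tau_P(P_{i+1})$ for $1 \le i \le n-2$, and the extra hypothesis $P_0 = \tau_P(P_1)$ extends the latter relation down to $i=0$. Combining with the identity above, $P_i = \tau_P(P_{i+1})$ is the downshift of $P'_{i+1}$ for all $0 \le i < n-1$, which is exactly what the definition of a period 1 seed requires. Having verified both conditions, $\mathbf{(x,P)}$ is a period 1 seed, and Corollary \ref{laur} then yields that $P$ generates a Laurent phenomenon sequence.

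The step I expect to be the main obstacle is the precise matching in the middle paragraph: one must check that the exchange Laurent polynomial $\widehat{P}_0$ entering the mutation is not merely proportional to $P$ by a monomial but literally equal to it, which is precisely where the hypothesis $\widehat{P}_0 = P_0 = P$ is used, and that the factor-removal and monomial-normalization conventions in $\tau_P$ and in the mutation genuinely agree (in particular that $H_j$, and hence $M_j$, is free of $x_j$, as guaranteed by the reasoning behind Proposition \ref{nodepend}). All remaining manipulations are routine bookkeeping of up- and downshifts.
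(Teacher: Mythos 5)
Your proposal is correct and follows essentially the same route as the paper: the paper's proof consists precisely of the observations that $P_{n-1}$ is the downshift of $P_0$ by construction and that, once $\widehat{P}_0 = P_0$, the steps defining $\tau_P$ and $\mu_0$ coincide, which is exactly the term-by-term matching you carry out in your middle paragraph. You have simply unpacked in detail the identification that the paper states in one line.
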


\begin{proof}
We remarked above that $\bf (x, P)$ is a valid seed. It is also clear that $P_{n-1}$ is the downshift of $P_0$.
Finally, observe that if $\widehat{P}_0 = P_0$, then the definitions of $\tau_P$ and $\mu_0$ coincide.
Therefore, the seed $\bf (x, P)$ is a period 1 seed, as desired.
\end{proof}

If $t = \bf(x, P)$ is a seed generated by $P\in\PP_0$ and is such that $P_0 = \tau_P(P_1)$, we say $t$ has {\it pseudoperiod} $1$. Proposition \ref{period1} can then be rephrased as saying that if $\widehat{P}_0 = P_0$, then $t$ has period 1.
The following conjecture, in conjunction with Proposition \ref{period1}, would show that period and pseudoperiod are equivalent definitions in this context.

\begin{conj}\label{hatconj}
Let $\widehat{P}_0$ be the exchange Laurent polynomial of $P_0$ for the generated seed $\bf (x, P)$. Then $\widehat{P}_0 = P_0$.
\end{conj}

In the rest of this paper, we classify certain families of polynomials that generate pseudoperiod 1 seeds.
In addition, we find many examples of pseudoperiod 1 seeds.
In all cases, we can show that the seeds are, indeed, period 1 seeds, using the Lemma below and Proposition \ref{period1}.

For simplicity of terminology, in cases where the Lemma below is satisfied, we will simply say period 1 instead of pseudoperiod 1.

\begin{lem}\label{hat}
$\widehat{P}_0 = P_0$ if either of the two conditions holds
\begin{enumerate}
	\item $P_j$ depends on $x_0$ whenever $P_0$ depends on $x_j$.
	\item All polynomials $P_j$, $0\leq j\leq n-1$, have the same number $d$ of terms.
\end{enumerate}
\end{lem}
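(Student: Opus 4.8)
The plan is to understand precisely what $\widehat{P}_0 = P_0$ means and then show that each of the two conditions forces this equality. Recall from the mutation definition that the exchange Laurent polynomial $\widehat{P}_0$ is obtained from $P_0$ by multiplying by a Laurent monomial $x_1^{a_1}\cdots x_{n-1}^{a_{n-1}}$ with exponents $a_j \in \Z_{\leq 0}$, chosen so that the second bullet in step (1) of mutation holds: for each $i \neq 0$, the substitution $\widehat{P}_0|_{x_j \leftarrow P_j/x}$ lands in $\mathcal{L}_j(t)$ and is not divisible by $P_j$ there. Since the monomial factor has nonpositive exponents, $\widehat{P}_0 = P_0$ is equivalent to saying that no negative powers are needed, i.e.\ all $a_j = 0$. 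So the goal reduces to showing: under either condition, for every $j$ with $1 \leq j \leq n-1$, multiplying $P_0$ by $x_j^{-1}$ is \emph{not} forced by the divisibility/integrality requirement.

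For condition (1), I would examine each index $j$ separately. If $P_0$ does not depend on $x_j$, then the substitution $x_j \leftarrow P_j/x$ does nothing to $P_0$, so no factor of $x_j$ is needed and $a_j = 0$ trivially. The interesting case is when $P_0$ depends on $x_j$; here condition (1) guarantees that $P_j$ depends on $x_0$. The key observation I expect to use is that, by (LP2), $P_j$ does not depend on $x_j$, and since $P_j$ depends on $x_0$ while we are performing the substitution $x_j \leftarrow P_j/x$ into $P_0$, the variable $x_j$ in $P_0$ gets replaced by something involving $x$ in the denominator but crucially already involving $x_0$ in the numerator, so that the result is honestly a Laurent polynomial in $\mathcal{L}_j(t)$ without needing to clear any power of $x_j$ from $P_0$ itself. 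I would argue that the integrality condition is automatically met because $P_0$ depends on $x_j$ only to nonnegative powers (it is a genuine polynomial), and substituting $x_j \mapsto P_j/x$ raises $P_j/x$ to nonnegative powers, which stays in $\mathcal{L}_j(t)$. The non-divisibility-by-$P_j$ requirement must then be checked, and this is where I anticipate the real work: I need to confirm that $P_0|_{x_j \leftarrow P_j/x}$ is not divisible by $P_j$ in $\mathcal{L}_j(t)$, which should follow from irreducibility of $P_0$ and $P_j$ (LP1) together with the fact that $P_j$ depends on $x_0$ but the denominators introduced are only powers of $x$.

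For condition (2), the strategy is different and relies on a counting/symmetry argument. If all $P_j$ have the same number $d$ of terms, I would invoke the structure of the $\tau_P$ construction: the seed is generated so that each $P_j = \tau_P(P_{j+1})$, and the operation $\tau_P$ can only preserve or decrease the number of terms (the step where we divide $G$ by the common factor $d$ with $P|_{x_i=0}$, and strip off monomials, cannot increase the term count, and the substitution step can cause cancellation). The equality of term counts across all $P_j$ should force that no cancellation and no nontrivial factor-stripping occurs at any stage, which in turn means the exchange Laurent polynomial $\widehat{P}_0$ coincides with $P_0$ because the monomial normalization $M_j$ and the denominator-clearing are all trivial. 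Concretely, I would show that if some $a_j < 0$ were required, then passing from $P_0$ to $\widehat{P}_0$ and through the mutation would strictly change the number of terms somewhere in the cycle, contradicting the uniform count $d$.

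The main obstacle I foresee is condition (2): making the term-counting argument rigorous requires tracking how $\tau_P$ and the mutation interact with the number of monomials, and in particular ruling out the possibility of a ``balanced'' cancellation that decreases and then increases the count back to $d$. I would want a clean invariant — perhaps showing that the term count is monotonic along the cycle $P_0 \to P_{n-1} \to \cdots \to P_1 \to P_0$ under the downshift/$\tau_P$ operations, so that uniformity forces constancy at every step and hence triviality of all the Laurent-monomial corrections. Condition (1) I expect to be more mechanical, essentially unwinding the definitions and using irreducibility, with the only subtlety being the non-divisibility check, which should reduce to the observation that $P_0$ and $P_j$ are distinct irreducibles.
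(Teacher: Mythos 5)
Your reduction of $\widehat{P}_0 = P_0$ to ``all the exponents $a_j$ vanish'' is right, but the proposal has two genuine gaps. First, you dismiss the case where $P_0$ does not depend on $x_j$ as trivial (``the substitution does nothing, so $a_j = 0$ trivially''). It is not: the defining property of $\widehat{P}_0$ still demands that $\widehat{P}_0|_{x_j \leftarrow P_j/x}$, which in this case is just $P_0$, be \emph{not divisible} by $P_j$. Since both are irreducible and not divisible by any variable, this amounts to showing $P_j \neq P_0$ for $j > 0$, and this is where the paper's proof does most of its work: it tracks the minimal and maximal indices $m, M$ on which $P_0$ depends through the chain of downshifts and applications of $\tau_P$, and shows $P_j$ either loses dependence on $x_M$ or acquires dependence on some $x_k$ with $k < m$. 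Nothing in your sketch addresses this. Second, you miss the reduction that makes the dependent case easy: since $P_j/x \equiv 0 \pmod{P_j}$, divisibility of $P_0|_{x_j \leftarrow P_j/x}$ by $P_j$ is equivalent to divisibility of $P_0|_{x_j = 0}$ by $P_j$. With that in hand, condition (1) in the dependent case is immediate — $P_0|_{x_j=0}$ is nonzero (as $x_j \nmid P_0$) and free of $x_0$ (as $P_0$ is), while $P_j$ involves $x_0$ by hypothesis, so $P_j$ cannot divide it. Your appeal to ``irreducibility of $P_0$ and $P_j$ together with the fact that $P_j$ depends on $x_0$'' gestures at the right ingredient but never produces this argument, and you explicitly flag it as the part you have not done.

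For condition (2), your term-counting strategy — showing the number of monomials is monotone along the cycle $P_0 \to P_{n-1} \to \cdots \to P_0$ and that uniformity forces no cancellation anywhere — does not connect to the condition actually being verified, which is again non-divisibility of $P_0|_{x_j=0}$ by $P_j$ for each $j$. The paper's argument is a direct comparison at a single step: when $P_0$ depends on $x_j$, the specialization $P_0|_{x_j=0}$ has at most $d-1$ terms while $P_j$ has $d$, so $P_j$ cannot divide it; when $P_0$ does not depend on $x_j$, the same $P_j \neq P_0$ structural argument as in case (1) is rerun, with the term count substituting for the $x_0$-dependence at the one place it was used. As written, your cycle-invariant idea would also need to rule out a cancellation that decreases and then restores the term count (you acknowledge this), and even if completed it would establish something about the $\tau_P$ process rather than the divisibility statement that characterizes $\widehat{P}_0$.
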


\begin{proof}
From the construction of the $P_j$, it is clear that
$P_0\big|_{x_j\leftarrow P_j/x} \in\Z[x_0^{\pm 1}, \ldots, x_{j-1}^{\pm 1}, x^{\pm 1}, x_{j+1}^{\pm 1}, \ldots, x_{n-1}^{\pm 1}]$.
It then suffices to show that $P_0\big|_{x_j\leftarrow P_j/x}$ is not divisible by $P_j$, or equivalently
that $P_0\big|_{x_j = 0}$ is not divisible by $P_j$.

(1) If $P_0$ depends on $x_j$, then $P_j$ depends on $x_0$ by assumption.
From Proposition \ref{nodepend}, $P_0$ does not depend on $x_0$ and therefore neither does $P_0\big|_{x_j = 0}$.
Then $P_j$ cannot divide $P_0\big|_{x_j = 0}$.

If $P_0$ does not depend on $x_j$, then $P_0\big|_{x_j = 0} = P_0$.
As both $P_0$ and $P_j$ are irreducible and not divisible by any $x_k$, we only need that $P_j \neq P_0$ for $j > 0$.
Let $m, M$ be the minimum and maximum indices $i$ such that $P_0$ depends on $x_i$.
We claim that $P_j$ either does not depend on $x_M$ or it depends on some $x_k$ with $k < m$; this immediately implies $P_j \neq P_0$ for $j > 0$.
If there is no intermediate polynomial $P_{j'}$ with $j' > j$ that depends on $x_0$, then $P_s$ is the downshift of $P_{s+1}$ for all $s \geq j$. Since the maximum index upon which $P_{n-1} = P(x_0, \ldots, x_{n-2})$ depends is $M - 1$, then the maximum index upon which $P_j$ depends is also smaller than $M$; in particular $P_j$ does not depend on $x_M$.
If there is some intermediate polynomial $P_{j'}$ with $j' > j$ that depends on $x_0$, let $j_0$ be the smallest such index $j_0 > j$ (so $P_s$ is the downshift of $P_{s+1}$ for all $j\leq s < j_0$).
Recall tht the polynomial $P_{j_0 - 1}$ comes from
\[ P_{j_0}\left|_{x_0 \leftarrow \frac{P|_{x_{j_0} = 0}}{x_{n}}} \right. . \]
Hence, $P_{j_0 - 1}$ depends on $x_{m-1}$ unless $P_{j_0}\big|_{x_0 = 0}$ is divisible by $P = P_0$.
Since $P_{j_0}$ depends on $x_0$, $P_0$ depends on $x_{j_0}$. From Proposition \ref{nodepend}, $P_{j_0}$ does not depend on $x_{j_0}$; therefore $P_0$ cannot divide $P_{j_0}|_{x_0 = 0}$.
Therefore $P_{j_0-1}$ depends on $x_{m-1}$. The polynomial $P_j$, which is the result of $j_0 - j - 1$ downshifts from $P_{j_0 - 1}$, then depends on $x_{m'}$ for some $m' < m$.

(2) If $P_0$ depends on $x_j$, then $P_0\big|_{x_j = 0}$ has at most $d-1$ terms. Thus $P_j$, which has $d$ terms, cannot divide it.

If $P_0$ does not depend on $x_j$, then $P_0|_{x_j\leftarrow P_j/x} = P_0$. As both $P_0$ and $P_j$ are irreducible and not divisible by any $x_k$, we only need to show $P_j \neq P_0$ for $j > 0$.
The argument is the same as in part (1) except for the reason why $P_0$ does not divide $P_{j_0}|_{x_0 = 0}$.
In this case, it is because $P_{j_0}|_{x_0 = 0}$ has at most $d-1$ terms (as $j_0$ was defined as an index for which $P_{j_0}$ depends on $x_0$) and $P_0$ has $d$ terms.
\end{proof}

\begin{rem}
One can see that if $P$ generates some period 1 seed, then such seed must be the one described in Proposition \ref{period1}. If we begin with $P \in \PP$ and follow the process mentioned above (recursively obtain the intermediate polynomials $P_j$, $0<j<n-1$), we may have that one of the conditions in Proposition \ref{period1} is not satisfied; in that case, $P$ is not a period 1 polynomial.
\end{rem}

We next use \cite[Proposition 2.10]{lp}, which says that if $\bf (x', F') = \mu_i(x, F)$, then $\bf (x, F) = \mu_i(x', F')$, to devise an analogue of $\tau$ that instead takes polynomials from $\PP_{i-1}$ to polynomials in $\PP_i$.

We define the mapping $\kappa = \kappa_P$, that is the inverse of $\tau$ as follows.

Given a polynomial $P\in\PP_0 = \Z[x_1, \ldots, x_{n-1}]$, let $P' = P(x_0, x_1, \ldots, x_{n-2})\in\PP_{n-1}$ and
$\kappa: \PP \to \PP$ a map which takes polynomials from $\PP_i$ to polynomials in $\PP_{i+1}$ for all $i\geq 0$.
If $Q\in\PP_i$, then $\kappa_P(Q)\in\PP_{i+1}$ is computed according to the following rules.

\begin{enumerate}
	\item Let $\displaystyle G' = G'(x_{-1}, x_1, \ldots, \widehat{x}_i, \ldots, x_{n-1}) = Q\big|_{x_{n-1} \leftarrow \frac{P'|_{x_i = 0}}{x_{-1}}}\in\PP[x_{-1}^{\pm 1}]$.
    \item If $d'$ be the factor of $G'$ shared with $P'\big|_{x_i = 0}$, i.e., $d' = \gcd(G', (P'|_{x_i = 0})^k)$ in $\Z[x_1, \ldots, \widehat{x}_i, \ldots, x_{n-1}]$ for some sufficiently large $k\in\N$, then let $H' = G'/d'$. 
	\item Finally, let $R = \kappa_P(Q)$ be the upshift of $M'H'$, where $M' \in \mathcal{L}(x_{-1}, x_1, \ldots, \widehat{x}_i, \ldots, x_{n-1})$ is such that $M'H' \in\Z[x_{-1}, x_1, \ldots, \widehat{x}_i, \ldots, x_{n-1}]$ and is not divisible by any $x_j$.
\end{enumerate}

\begin{rem}
If $Q$ does not depend on $x_{n-1}$, then $H' = G' = Q$, $M' = 1$ and so $\kappa_P(Q)$ is simply the upshift of $Q$.
\end{rem}

The proof that $\kappa$ is a well defined map comes from the analogous statements of
Propositions \ref{nodepend} and \ref{irreducible} to $\kappa$.
Given an irreducible polynomial $P$, choose $0 < k < n-1$. We generate a seed $\bf (x, P)$ by letting
$P_0 = P$, $P_{n-1} = P'$ be the downshift of $P$ and recursively defining $P_i = \tau_P(P_{i+1})$ for all $k < i < n$
and $P_i = \kappa_P(P_{i-1})$ for all $0 < i < k$. A refinement of Proposition \ref{period1} is then

\begin{prop}\label{refinement}
Let $\widehat{P}_0$ be the exchange Laurent polynomial of $P_0$ for the generated seed $\bf (x, P)$.
If $P_k = \kappa(P_{k-1})$, or equivalently $P_{k-1} = \tau(P_k)$, and $\widehat{P}_0 = P_0$, then
$\bf (x, P)$ is a period 1 seed.
\end{prop}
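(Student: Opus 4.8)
The plan is to reduce Proposition~\ref{refinement} to Proposition~\ref{period1}, since the conclusions are identical and the only difference lies in how the intermediate polynomials $P_1, \ldots, P_{n-2}$ are generated. In Proposition~\ref{period1}, all of them come from repeated application of $\tau_P$ downward from $P_{n-1}$; here, the polynomials $P_{k+1}, \ldots, P_{n-1}$ are still obtained by $\tau_P$, but $P_1, \ldots, P_{k-1}$ are instead built upward from $P_0$ using $\kappa_P$. So the essential thing I would establish is that these two generation procedures produce the \emph{same} seed, after which Proposition~\ref{period1} applies verbatim. Concretely, I would invoke the fact (cited in the excerpt from \cite[Proposition 2.10]{lp}) that mutation is an involution, which is precisely what guarantees that $\kappa_P$ is a genuine inverse to $\tau_P$ on the relevant polynomial spaces.

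First I would verify that the seed $\bf(x,P)$ generated in Proposition~\ref{refinement} is a valid seed: this is immediate from the analogues of Propositions~\ref{nodepend} and \ref{irreducible} for $\kappa_P$, which the excerpt asserts hold by symmetry, so each $P_i$ lies in $\PP_i$. Next, the two explicit assumptions of the proposition are that $P_k = \kappa_P(P_{k-1})$ (equivalently $P_{k-1} = \tau_P(P_k)$) and that $\widehat{P}_0 = P_0$. The first of these is exactly the ``gluing'' condition: the downward chain $\tau_P$ produces $P_{k-1}$ from $P_k$, while the upward chain $\kappa_P$ produces $P_k$ from $P_{k-1}$, and the hypothesis says these two chains agree at the junction $k-1 \leftrightarrow k$. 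I would then argue that since $\kappa_P$ inverts $\tau_P$, the single relation $P_{k-1} = \tau_P(P_k)$ propagates: applying $\kappa_P$ repeatedly to $P_0$ to build $P_1, \ldots, P_{k-1}$ yields the same polynomials as applying $\tau_P$ repeatedly downward would, so the entire seed coincides with the one generated purely by $\tau_P$ as in Proposition~\ref{period1}.

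Once this identification is made, the rest follows directly. The condition $P_0 = \tau_P(P_1)$ required by Proposition~\ref{period1} is exactly the statement $P_k = \kappa_P(P_{k-1})$ transported down the chain (or read off at the top from the fact that $P_1 = \kappa_P(P_0)$ inverts to $P_0 = \tau_P(P_1)$), and $\widehat{P}_0 = P_0$ is assumed outright. Therefore the seed is a period 1 seed by Proposition~\ref{period1}, and in particular $P$ generates a Laurent phenomenon sequence by Corollary~\ref{laur}.

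I expect the main obstacle to be making the equivalence $P_k = \kappa_P(P_{k-1}) \iff P_{k-1} = \tau_P(P_k)$ fully rigorous and confirming that it alone forces the two generation procedures to coincide everywhere, rather than just at the single index $k$. This rests entirely on the involutivity of mutation and the precise sense in which $\kappa_P$ was defined as the inverse of $\tau_P$; one must check that the monomial-normalization and factor-removal steps in the two algorithms are genuinely reciprocal, so that composing $\kappa_P$ and $\tau_P$ returns the identity on irreducible polynomials not divisible by any $x_j$. Granting the $\kappa_P$-analogues of Propositions~\ref{nodepend} and \ref{irreducible} together with \cite[Proposition 2.10]{lp}, this reciprocity should be a formal consequence, and the proposition reduces cleanly to its predecessor.
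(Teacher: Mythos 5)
Your reduction to Proposition~\ref{period1} via the involutivity of mutation (\cite[Proposition 2.10]{lp}) is correct and is exactly the argument the paper intends; the paper in fact states Proposition~\ref{refinement} without proof, immediately after introducing $\kappa_P$ as the inverse of $\tau_P$. The gluing hypothesis $P_{k-1}=\tau_P(P_k)$ together with the construction $P_i=\kappa_P(P_{i-1})$ for $0<i<k$ propagates, via the inverse property, to $P_{i-1}=\tau_P(P_i)$ for all $i$, so the seed coincides with the purely $\tau_P$-generated one and Proposition~\ref{period1} applies verbatim.
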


\begin{rem}
We have implemented the above algorithm (with $k=\lfloor n/2\rfloor$) in Sage at \url{http://sage.lacim.uqam.ca/home/pub/23/}. This can be used to test whether a given polynomial $P$ is period 1.
\end{rem}

\section{Statements of results and conjectures}\label{results}

In this section, we present our main results. Their proofs will be presented in the remaining sections. In the first subsection, we give our classification theorems, while in the second subsection, we give a proposition asserting that several large families of polynomials are 1 periodic.

\subsection{Classification theorems}

We first classify all period 1 polynomials when $n=2,3$.

\begin{thm}\label{n2thm}
For $n = 2$, the only period 1 polynomials $P$ are
\begin{enumerate}
    \item Irreducible polynomials that are monic and palindromic, i.e., that satisfy $x^{\deg(P)}\cdot P(\frac{1}{x}) = P(x)$.
    \item Irreducible polynomials of even degree that are monic and antipalindromic, i.e., that satisfy $x^{\deg(P)}\cdot P(\frac{1}{x}) = -P(x)$.
	\item Monic irreducible polynomials of degree $2$.
\end{enumerate}
\end{thm}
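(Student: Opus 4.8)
For $n=2$ the exchange polynomial $P=P_0$ lies in $\PP_0=\Z[x_1]$, so it is a single-variable polynomial; write $P=\sum_{k=0}^{D}c_k x_1^{k}$ with $D=\deg P$ and $a:=P(0)=c_0\neq 0$ (as $P$ is not divisible by $x_1$). The generated seed has only two exchange polynomials, $P_0=P(x_1)$ and its downshift $P_1=P(x_0)$. Because $P_0$ depends on $x_1$ precisely when $P_1$ depends on $x_0$, hypothesis (1) of Lemma~\ref{hat} is automatic, so $\widehat P_0=P_0$ and, by Proposition~\ref{period1}, the seed is period $1$ if and only if it is pseudoperiod $1$, i.e.\ iff $\tau_P(P_1)=P_0$. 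The entire classification therefore reduces to a functional equation for the single polynomial $P$.

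The plan is to first put $\tau_P(P_1)$ in closed form. Feeding $Q=P_1=P(x_0)$ through the algorithm defining $\tau_P$, the substitution $x_0\leftarrow P|_{x_1=0}/x_2=a/x_2$ gives $G=P(a/x_2)=\sum_k c_k a^{k}x_2^{-k}$; clearing denominators with the monomial $x_2^{D}$ produces the reversed, $a$-twisted polynomial $\sum_{m} c_{D-m}a^{D-m}x_2^{m}$; the gcd step divides out the constant $d$ that this shares with $(P|_{x_1=0})^{k}=a^{k}$; and the final downshift returns
\[
\tau_P(P_1)=\frac{1}{d}\,x_1^{D}\,P\!\left(\frac{a}{x_1}\right)=\frac{1}{d}\sum_{m=0}^{D}c_{D-m}\,a^{D-m}\,x_1^{m}.
\]
Thus $\tau_P(P_1)$ is, up to the normalizing constant $d$, the coefficient reversal of $P$ twisted by powers of $a=P(0)$.

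Period $1$ now reads $c_{D-m}\,a^{D-m}=d\,c_m$ for all $0\le m\le D$. Comparing the two extreme coefficients $m=0$ and $m=D$ (where $c_0=a$ and, after normalizing $P$ to be monic, $c_D=1$) forces $d=a$ together with $a^{D-2}=1$; this is the step that singles out degree $2$. For $D=2$ the constraint $a^{D-2}=1$ is vacuous, the single middle equation holds automatically, and every monic irreducible quadratic is period $1$, which is case~(3). For $D\ge 3$ the relation $a^{D-2}=1$ over $\Z$ leaves only $a=1$, or $a=-1$ when $D$ is even. Substituting $a=1$ into $c_{D-m}=c_m$ gives exactly the monic palindromic polynomials $x^{D}P(1/x)=P(x)$ of case~(1), while $a=-1$ with $D$ even, once the sign is absorbed into the normalization, yields precisely the even-degree antipalindromic polynomials of case~(2). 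Irreducibility and non-divisibility by $x_1$ are the standing seed hypotheses and so pass to each family for free, and the converse---that each listed family satisfies $c_{D-m}a^{D-m}=a\,c_m$---is a one-line substitution.

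The step I expect to be the real obstacle is the honest justification of the displayed closed form, specifically controlling the gcd/content factor $d$ and its sign so that the extremal comparison cleanly yields $d=a$ and $a^{D-2}=1$. In particular one must verify that the content removed in the gcd step is exactly $a$ in the non-unit (degree-$2$) regime and trivial in the unit regime, and track the overall sign carefully enough to separate the palindromic case $a=1$ from the antipalindromic case $a=-1$. Everything downstream of that is elementary coefficient bookkeeping.
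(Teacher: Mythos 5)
Your proposal is correct and takes essentially the same route as the paper: both reduce period~$1$ to the single functional equation $P(x_1)=a^{-1}x_1^{\deg P}\,P(a/x_1)$ obtained by pushing $P_1=P(x_0)$ through $\tau_P$, and then classify by equating coefficients, splitting into the degree-$2$ case versus the constraint $a^{\deg P-2}=1$. The gcd factor you flag as the remaining obstacle is immediate and equals $a$ in every regime --- the coefficient of $x^0$ in $G=\sum_k c_k a^k x^{-k}$ is $c_0=a$ itself while every other coefficient is a multiple of $a$, so the content of $G$ is exactly $a$ up to sign; with $d=a$ fixed, the relation at $m=D$ then yields $c_D=1$ directly, so monicity comes out as a conclusion rather than a normalization.
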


\begin{rem}
Gregg Musiker showed in \cite{musiker} that the only polynomials $P$ that generate Laurent phenomenon sequences are the ones in the above theorem. Thus Theorem \ref{n2thm} shows that when $n=2$, period 1 polynomials are exactly the polynomials that generate Laurent phenomenon sequences.
\end{rem}

\begin{thm}\label{n3thm}
For $n = 3$, the only period 1 polynomials $P$ are
\begin{enumerate}
    \item $P = x_1x_2 + ax_1 + ax_2$, for any $a\in\Z, a\neq 0$,
    \item $P = x_1x_2 + ax_1 - ax_2$, for any $a\in\Z, a\neq 0$,
    \item $P = x_1 - x_2 - 1$,
    \item $P = -x_1 + x_2 - 1$,
    \item $P = x_1x_2 + ax_1 + ax_2 + b$, for any $a, b\in\Z$, not both of which are $0$,
    \item $P = x_1^2 + x_2^2 + ax_1x_2 + bx_1 + bx_2 + c$, for any $a, b, c\in\Z$,
    \item $P = -x_1^2 -x_2^2 + ax_1x_2+b$, for any $a,b\in\Z$,
    \item $P = \pm x_1x_2+a$, for any $a\in\Z, a\neq 0$,
    \item $P = 1+x_1^mx_2^n+\sum_{0<i<m\atop 0<j<n}C_{i,j}(x_1^ix_2^j+x_1^{m-i}x_2^{n-j})$, for any $C_{i, j} \in \Z$, $m, n \in\N_{>0}$,
    \item $P = -1+(-1)^{m+1}x_1^mx_2^n+\sum_{0<i<m\atop 0<j<n}C_{i,j}(x_1^ix_2^j+(-1)^{m+j+i}x_1^{m-i}x_2^{n-j})$ for any $C_{i, j}\in\Z$, $m, n\in\N_{>0}$, $m\equiv n\mod 2$.
\end{enumerate}
\end{thm}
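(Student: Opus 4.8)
The plan is to classify all period 1 polynomials for $n=3$ by running the generation algorithm of Section 2 in reverse: rather than guessing the list and verifying, I would start from the constraint that defines period 1 seeds and deduce the possible shapes of $P = P_0$. For $n=3$ a seed has the form $(\{x_0,x_1,x_2\},\{P_0,P_1,P_2\})$ where $P_0 = P(x_1,x_2)$, $P_2$ is the downshift $P(x_0,x_1)$, and $P_1 = \tau_P(P_2)$. By Proposition \ref{period1} and the remark following Lemma \ref{hat}, the seed is period 1 precisely when the recursively generated $P_1$ satisfies $P_0 = \tau_P(P_1)$ (pseudoperiod 1) together with $\widehat{P}_0 = P_0$; moreover such a seed, if it exists, is unique. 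So the entire problem reduces to understanding the single equation $P_0 = \tau_P(\tau_P(P_2))$, i.e.\ applying $\tau_P$ twice to the downshift of $P$ must return $P$.

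The key computational step is to make the operator $\tau_P$ explicit in the $n=3$ case. Writing $P = P(x_1,x_2)$ and expanding $P_2 = P(x_0,x_1) = \sum_k f_k(x_1) x_0^k$, the algorithm for $\tau_P$ replaces $x_0$ by $P|_{x_2=0}/x_3 = P(x_1,0)/x_3$, cancels the factors shared with $P|_{x_2=0} = P(x_1,0)$, and downshifts. I would first handle the generic obstruction: for the double application to close up, the substitution $x_0 \leftarrow P(x_1,0)/x_3$ followed by cancellation must interact compatibly with the symmetric structure forced by the downshift, and this strongly constrains how $P(x_1,x_2)$ depends on $x_1$ versus $x_2$. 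I would organize the casework by the Newton polygon of $P$ — specifically by which monomials $x_1^i x_2^j$ appear — separating the degenerate cases where $P$ is linear or fails to depend on one variable (giving families (3)--(4) and (8)) from the genuinely bivariate cases. The symmetry condition $\widehat P_0=P_0$ from Lemma \ref{hat} will typically force a palindromic-type relation between the coefficients of $x_1^i x_2^j$ and those of $x_1^{m-i} x_2^{n-j}$, which is exactly the shape visible in families (9) and (10), with the alternating signs in (10) arising from the antipalindromic branch analogous to case (2) of Theorem \ref{n2thm}.

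The main obstacle I anticipate is controlling the cancellation step (step 2 of the $\tau_P$ algorithm): I must determine precisely when $G$ shares a nontrivial factor $d$ with $P(x_1,0)$, because that cancellation is what can lower degrees and break the naive symmetry, and it is the source of the constant-term constraints (the $1$ and $-1$ leading the expansions in (9) and (10), and the normalization $m\equiv n \bmod 2$ in (10)). Concretely, I expect that for $\tau_P$ applied twice to close up, either $P(x_1,0)$ must be a unit (forcing the constant term to be $\pm 1$ and no pure $x_1$ or pure $x_2$ terms, yielding the monomial-edge families (9)--(10)), or the degree is low enough that only finitely many coefficient patterns survive (yielding families (1)--(2) and (5)--(8)). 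I would therefore split into the subcase $P(x_1,0)$ is constant and the subcase it is nonconstant, treating each by matching coefficients after the double $\tau_P$; the bounded-degree subcases reduce to a finite linear-algebra verification over the coefficients, while the unit subcase reduces to showing that the palindromic/antipalindromic coefficient symmetry is both necessary and sufficient. Finally, for every surviving family I would confirm $\widehat P_0 = P_0$ via Lemma \ref{hat}, checking condition (1) or (2) to upgrade pseudoperiod 1 to genuine period 1.
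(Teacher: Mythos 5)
Your reduction of the problem to the single closure condition $P_0=\tau_P(\tau_P(P_2))$, your plan to make $\tau_P$ explicit by writing $P(x_0,x_1)=\sum_k f_k(x_1)x_0^k$ and substituting $x_0\leftarrow P(x_1,0)/x_3$, and your dichotomy between the case where $P(x_1,0)$ is a unit (leading to families (8)--(10)) and the remaining cases all match the skeleton of the paper's argument. However, there is a genuine gap at the pivot of your plan: you assert that in the non-unit case ``the degree is low enough that only finitely many coefficient patterns survive,'' but you give no mechanism that bounds the degrees, and this bound is precisely the hardest and most essential step of the proof. A priori the $x_0$-degree $m$ of $P(x_0,x_1)$, the $x_0$-degree $n$ of the intermediate polynomial $Q$, and the degrees of the restrictions $P(x_0,0)$, $P(0,x_0)$ could all be arbitrarily large even when $P(x_1,0)$ is nonconstant, so the claimed ``finite linear-algebra verification'' has no finite search space to run over. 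The paper obtains the bound by a careful bookkeeping of degrees through \emph{both} applications of $\tau$: setting $x_2=0$ in the formulas for $Q$ and for the regenerated $P$ gives $Q(x_0,0)=f_m(x_0)P^m(x_0,0)/d(x_0)$ and $P(x_0,0)=g_n(x_0)P^n(0,x_0)/t(x_0)$, and then the divisibility constraints $d(x_0)\mid P(0,x_0)$ and $t(x_0)\mid Q(0,x_0)$ (which come from examining the top coefficient in $x_2$, i.e.\ from controlling exactly the cancellation step you flagged as the main obstacle) yield
\[
2\;\geq\;\deg g_n+\deg f_m+(m-1)\bigl(\deg P(x_0,0)-1\bigr)+(n-1)\bigl(\deg P(0,x_0)-1\bigr).
\]
It is this inequality that forces $\deg P(x_0,0),\deg P(0,x_0)\leq 2$ and caps $m$ and $n$, producing the five cases the paper then analyzes (with two further refinement lemmas on $\deg t$ and on when $m=\deg P(x_0,0)$). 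Your proposal correctly identifies where the difficulty lives --- the interaction of the cancellation factor with the degree count --- but without deriving a bound of this type the classification does not terminate; you would need to supply this degree inequality (or an equivalent) before the coefficient-matching in families (1)--(7) becomes a finite check.
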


\begin{rem}
The arbitrary coefficients and exponents in Theorem \ref{n3thm} must be such that $P$ is irreducible and not divisible by any $x_j$.
\end{rem}

Our final classification theorem comes from our own definition of \emph{Double Quivers}. The family of polynomials we found includes those that are classified by the main theorem in \cite{fm}.

\begin{thm}\label{binomialthm}
The binomial $P$ generates a period 1 seed which corresponds to a double quiver if and only if it is of the form \[ P = \prod_{1\leq i\leq n}{x_i^{a_i}} + \prod_{1\leq i\leq n}{x_i^{b_i}}, \] where $a_i, b_i\in\Z_{\geq 0}$ are such that $a_i = 0 \Longleftrightarrow a_{n-i} = 0$ and $b_j = 0 \Longleftrightarrow b_{n-j} = 0$.
\end{thm}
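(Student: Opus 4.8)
\textbf{Proof proposal for Theorem~\ref{binomialthm}.}

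The plan is to prove both directions by explicitly tracking how the map $\tau_P$ acts on binomials and translating the algebraic conditions of Proposition~\ref{period1} (together with Lemma~\ref{hat}) into the combinatorial symmetry condition $a_i=0 \Leftrightarrow a_{n-i}=0$ and $b_j=0 \Leftrightarrow b_{n-j}=0$. The key observation driving everything is that each exchange polynomial $P_j$ in a mutual binomial seed is itself a binomial, so it has exactly $d=2$ terms; by part~(2) of Lemma~\ref{hat} this automatically guarantees $\widehat{P}_0=P_0$, so that pseudoperiod~1 upgrades to genuine period~1 for free. This reduces the entire theorem to understanding when the recursively defined sequence $P_{n-1},P_{n-2},\ldots,P_1$ produced by $\tau_P$ stays binomial and closes up with $P_0=\tau_P(P_1)$. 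I would first normalize by dividing through by the monomial $\gcd$, writing $P=\mathbf{x}^{\mathbf a}+\mathbf{x}^{\mathbf b}$ with $\min(a_i,b_i)=0$ for each $i$, which is forced by (LP1).

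For the forward direction, I would assume $P$ generates a period~1 seed corresponding to a double quiver and derive the symmetry. The first step is to compute $\tau_P$ on a binomial explicitly: if $Q=\mathbf{x}^{\mathbf c}+\mathbf{x}^{\mathbf d}\in\PP_i$, then substituting $x_0\leftarrow (P|_{x_i=0})/x_n$, cancelling the common factor with $P|_{x_i=0}$, and taking the downshift should produce another binomial whose exponent vectors are governed by an affine-linear bookkeeping of the exponents of $P$. The heart of the argument is that the double-quiver data encodes precisely the exponent matrix of the seed, and the mutation-periodicity requirement $P_0=\tau_P(P_1)$ forces a palindromic relation among the columns of this matrix; this is where the condition $a_i=0\Leftrightarrow a_{n-i}=0$ emerges, in direct analogy with the skew-symmetry-plus-periodicity constraint that Fordy and Marsh obtain for quivers in \cite{fm}. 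The conditions (LP2), i.e.\ $P_i$ does not depend on $x_i$, must be checked to hold throughout the iteration and will pin down the remaining vanishing pattern.

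For the reverse direction, I would take $P$ of the stated symmetric form and verify directly that the generated seed is period~1. Using the explicit formula for $\tau_P$ on binomials from the forward direction, I would show by induction that each $P_j$ is a binomial of the form $\prod_i x_i^{a_i^{(j)}}+\prod_i x_i^{b_i^{(j)}}$ whose exponent vectors are obtained from $\mathbf a,\mathbf b$ by the expected shift-and-reflect operation, and that the symmetry hypothesis is exactly what makes the common-factor cancellation in step~(2) of the $\tau_P$ algorithm behave uniformly, so that after $n-1$ applications one recovers $P_0=\tau_P(P_1)$. Because every $P_j$ has two terms, Lemma~\ref{hat}(2) then certifies period~1. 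I expect the main obstacle to be the bookkeeping in step~(2): controlling exactly which monomial factor $d=\gcd(G,(P|_{x_i=0})^k)$ is removed at each stage, since whether the substituted term shares a factor with $P|_{x_i=0}$ depends delicately on the support of $\mathbf a$ and $\mathbf b$ relative to the index $i$. Getting this cancellation pattern to match the symmetry condition in both directions simultaneously—and verifying that the resulting combinatorial object is genuinely a ``double quiver'' as we define it—is the crux, and I would organize it by setting up the double quiver so that its adjacency data is manifestly the exponent difference $\mathbf a-\mathbf b$ and then showing mutation-periodicity of the quiver is equivalent to the palindromic vanishing condition.
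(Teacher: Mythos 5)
Your overall frame is sound and is, in substance, parallel to the paper's: the paper also reduces everything to tracking the exponent data of a binomial seed, packaged as the B-matrix of a double quiver whose mutation rule (\ref{bmatrixmutation}) is exactly the ``affine-linear bookkeeping of the exponents'' you describe, and it likewise relies on the fact that a seed all of whose exchange polynomials are binomials satisfies condition (2) of Lemma \ref{hat}, so pseudoperiod $1$ upgrades to period $1$. The genuine gap is in the forward direction, at the one place where the theorem requires real work. You assert that the palindromic support condition ``emerges in direct analogy with the skew-symmetry-plus-periodicity constraint'' of \cite{fm}. That analogy does not transfer: double quivers are precisely the setting in which the B-matrix is \emph{not} assumed skew-symmetric (or even sign-skew-symmetric), so the Fordy--Marsh derivation of $b_{i,0}=-b_{0,n-i}$ is unavailable. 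What the paper actually does (Theorem \ref{mutualdoublequiver}) is telescope the periodicity equations (\ref{ex-3})--(\ref{ex-1}) into (\ref{ex1}) and (\ref{ex2}) and prove $b_{0,i}=-b_{n-i,0}$ and $b_{i,0}=-b_{0,n-i}$ by induction on $i$; the inductive step hinges on the mutuality hypothesis at vertex $0$, which forces the two indicator functions in (\ref{ex4}) to agree and makes the correction terms $\epsilon$ cancel in symmetric pairs. Your proposal mentions mutuality only in passing and never locates where it is used; without it the induction fails, and the statement being proved is really a classification of \emph{mutual} binomial seeds (canonical mutual double quivers), as the surrounding text of the paper makes explicit.

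A related point that would repair part of your outline: the gcd bookkeeping in step (2) of $\tau_P$, which you correctly flag as delicate, is exactly what mutuality controls. If some $P_i$ depends on $x_0$ while $P=P_0$ does not depend on $x_i$, then the substitution $x_0 \leftarrow P|_{x_i=0}/x_{n+1}$ inserts the full binomial $P$, no cancellation with $P|_{x_i=0}$ occurs, and $\tau_P(P_i)$ acquires more than two terms, so the seed ceases to be binomial; this is the mutability condition $b_{i,0}\neq 0 \Rightarrow b_{0,i}\neq 0$, and passing to the canonical (mutual at $0$) double quiver is what turns it into condition (1) of Theorem \ref{mutualdoublequiver}. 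If you make this case analysis explicit and then run the paper's telescoping induction on the exponent vectors, your polynomial-level argument becomes a faithful translation of the B-matrix proof; as written, the key inductive cancellation is missing.
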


Finally, the families of polynomials we have found, and that we present in the next subsection, give rise to the following conjectures:

\begin{conj}\label{multilinearconj}
If $P$ is a multilinear polynomial with positive coefficients that generates a period 1 seed, then $P(x_1, x_2, \ldots, x_n) = P(x_n, x_{n-1}, \ldots, x_1)$.
\end{conj}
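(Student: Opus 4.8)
The plan is to attack Conjecture~\ref{multilinearconj} by exploiting the structure of the map $\tau_P$ that generates the seed, together with the symmetry that multilinearity imposes on the exchange polynomials. The underlying intuition is that the recurrence $x_{m+n}x_m = P(x_{m+1},\ldots,x_{m+n-1})$ is reversible: running it backwards should be governed by the reversed polynomial $\widetilde{P}(x_1,\ldots,x_n) := P(x_n,\ldots,x_1)$, and period 1 forces these two directions to be compatible. Concretely, I would first set up the reversal involution $\sigma$ on $\mathcal{P}$ sending $x_i \mapsto x_{n-i}$ and show that $\sigma$ intertwines the forward map $\tau_P$ with the backward map $\kappa_{\widetilde{P}}$ (up to the appropriate up/down shift), using the inverse relationship between $\tau$ and $\kappa$ recorded before Proposition~\ref{refinement} and the fact that $\mu_i$ is an involution (\cite[Proposition 2.10]{lp}). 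If $P$ generates a period 1 seed, then by the Remark following Lemma~\ref{hat} this seed is unique and determined by $P$; applying $\sigma$ to the whole seed would then show that $\widetilde{P}$ also generates a period 1 seed, namely the reflected one.

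The core step is then to argue that, among multilinear polynomials with positive coefficients, the seed generated by $P$ and the seed generated by $\widetilde{P}$ must in fact coincide, which forces $P = \widetilde{P}$. Here is where positivity and multilinearity do the real work. Multilinearity means each $P_i$ has degree at most $1$ in every variable, so in the computation of $\tau_P$ the substitution $x_0 \leftarrow P|_{x_i=0}/x_{n}$ followed by clearing denominators and removing common factors with $P|_{x_i=0}$ keeps every intermediate polynomial multilinear; I would verify this is preserved under the whole recursion, so the entire generated seed consists of multilinear polynomials. Positivity then guarantees that no cancellation occurs when forming $G$, $H$, and $M$ in the definition of $\tau_P$, so that the number of terms and the support of each $P_i$ are controlled combinatorially rather than being subject to the usual gcd-and-cancel subtleties. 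In particular positivity should let me conclude that the $\gcd$ step in $\tau_P$ is trivial (the common factor $d$ is a unit), so $\tau_P$ reduces to ``substitute, clear denominators, and downshift,'' which is a much more rigid operation and commutes cleanly with $\sigma$.

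With that rigidity in hand, the remaining step is to compare the two period 1 seeds. Both $(\mathbf{x},\mathbf{P})$ generated by $P$ and its $\sigma$-image generated by $\widetilde{P}$ have $P_0$ equal to $P$ and $\widetilde{P}$ respectively, and both have all the symmetry-constraints (LP1), (LP2) of Section~\ref{lpalgebras}; I would show that the uniqueness in the Remark after Lemma~\ref{hat}, applied in the forward direction, pins down $P_0$ from the downshifted tail $P_{n-1},\ldots,P_1$, while the reversed seed pins down $\widetilde{P_0}$ from the reversed tail. Matching the two then yields $P = \widetilde{P}$, i.e.\ $P(x_1,\ldots,x_n) = P(x_n,\ldots,x_1)$.

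I expect the main obstacle to be the second paragraph: proving that positivity genuinely forces the $\gcd$/cancellation step in $\tau_P$ to be trivial and that multilinearity is preserved throughout the recursion. It is easy to believe that positive coefficients prevent cancellation in a single multiplication, but the map $\tau_P$ involves a rational substitution $x_0 \leftarrow P|_{x_i=0}/x_{n}$ and then a division by a common factor, and one must rule out the possibility that $P|_{x_i=0}$ shares a nontrivial factor with $G$ that only appears after substitution. I would handle this by tracking the $x_{n}$-degree and the positivity of coefficients through each of the steps (1)--(3) in the definition of $\tau_P$, showing that a shared factor $d$ depending on $x_{n}$ would force a cancellation incompatible with all coefficients being positive, and that a shared factor independent of $x_{n}$ would contradict irreducibility exactly as in the proof of Proposition~\ref{irreducible}. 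If positivity turns out to be too weak to kill cancellation in full generality, a fallback is to prove the weaker statement that the supports (not the coefficients) of $P$ and $\widetilde{P}$ agree, and then use positivity only at the very end to upgrade support-symmetry to honest equality of polynomials.
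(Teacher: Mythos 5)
First, a point of orientation: the statement you are proving is stated in the paper only as Conjecture~\ref{multilinearconj}; the authors give no proof, so your attempt must stand on its own. It does not, for two reasons. The first is the claim in your second paragraph that positivity forces the gcd step in $\tau_P$ to be trivial. This is refuted by the paper's own examples: the extreme polynomial $P = x_1x_{n-1} + A\sum_{i=1}^{n-1}x_i + B$ with $A,B>0$ (family (3) of Theorem~\ref{families}) is multilinear with positive coefficients, yet its intermediate polynomials $P_i = x_{i-1}+x_{i+1}+A$ are linear. For $n=4$, applying $\tau_P$ to $P_3 = x_0(x_2+3)+3x_1+3x_2+2$ gives $G = (3x_1+3x_2+2)\cdot\frac{x_2+3+x_4}{x_4}$, so the common factor $d = P|_{x_3=0} = 3x_1+3x_2+2$ is a non-unit; moreover, before this division $G$ contains an $x_2^2$ term, so multilinearity of the intermediate polynomials is itself a consequence of the nontrivial cancellation you are trying to rule out, not something that survives without it. Positive coefficients prevent terms from cancelling inside a sum, but they do nothing to prevent $G$ from having an honest polynomial factor in common with $P|_{x_i=0}$.

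The second, more structural gap is the final step. Your first paragraph essentially re-derives the paper's Reflection Lemma (Lemma~\ref{reflemma}): if $P$ is period 1 then so is $\widetilde P(x_1,\dots,x_{n-1}) := P(x_{n-1},\dots,x_1)$. But concluding from this that the two seeds coincide, hence $P=\widetilde P$, is a non sequitur. The Remark after Lemma~\ref{hat} gives uniqueness of the seed as a function of its generating polynomial $P_0$, not a way of recovering $P_0$ from the tail that would identify the two seeds; two distinct polynomials can each generate their own period 1 seed, and reflection can genuinely exchange them. The paper's flip-symmetric binomials exhibit exactly this: for $n=8$, both $x_1^3x_7^2+x_2x_4^3x_6$ and its reflection $x_1^2x_7^3+x_2x_4^3x_6$ are distinct period 1 polynomials with positive coefficients (they fail the conjecture's hypotheses only by not being multilinear). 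So any correct proof must use multilinearity in an essential, quantitative way to exclude such reflection-swapped pairs, and your outline never supplies that step --- ``matching the two'' is asserted rather than argued. A more promising route would be to first establish your fallback claim about supports and then analyze, term by term, how the substitution $x_0 \leftarrow P|_{x_i=0}/x_n$ constrains the exponent vectors of a multilinear $P$ under the period 1 condition.
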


\begin{conj}\label{linearconj}
If $n$ is odd, no linear polynomial with positive coefficients generates a period 1 seed.
If $n$ is even, the only linear polynomial $P$ with positive coefficients that generates a period 1 seed is $x_{n/2} + 1$.
\end{conj}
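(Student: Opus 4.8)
The plan is to run the generation algorithm of Section \ref{lpalgebras} on a general linear $P$ and track how the total degree of the intermediate exchange polynomials evolves, exploiting positivity to forbid cancellation. Write $P = c_0 + \sum_{i=1}^{n-1} c_i x_i$ with every nonzero $c_i$ a positive integer, let $S = \{\, i \ge 1 : c_i \neq 0 \,\}$ be the support, and set $\ell = \min S$. Since $P$ is not divisible by any $x_j$ we have $S \neq \emptyset$, and since each exchange polynomial is irreducible and non-constant, $\deg P_j \ge 1$ for all $j$. By Proposition \ref{period1}, $P$ is period $1$ only if $P_0 = \tau_P(P_1)$, where $P_{n-1}$ is the downshift of $P$ and $P_i = \tau_P(P_{i+1})$; in particular the chain of intermediate polynomials must close up at the degree-$1$ polynomial $P_0 = P$.

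First I would follow the downshifts. By the Remark after the definition of $\tau_P$, as long as an intermediate polynomial does not depend on $x_0$ the map $\tau_P$ is simply the downshift, so $P_{n-1}, P_{n-2}, \dots$ are successive downshifts of $P$, each of degree $1$, until the minimal-index variable reaches $x_0$. This happens at $P_{n-\ell}$, whose form is the $\ell$-fold downshift $c_0 + c_\ell x_0 + \sum_{i>\ell} c_i x_{i-\ell}$. The next step $P_{n-\ell-1} = \tau_P(P_{n-\ell})$ substitutes $x_0 \leftarrow L/x_n$ with $L = P|_{x_{n-\ell}=0}$, and the numerator to be considered is $\sum_k f_k L^k x_n^{\,e-k}$, where $P_{n-\ell} = \sum_k f_k x_0^k$ and $e = \deg_{x_0} P_{n-\ell} = 1$. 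The decisive dichotomy is whether $L$ is constant. Since $\ell \in S$, one checks $L$ is constant iff $S = \{n-\ell\}$, which forces $n-\ell = \ell$, i.e. $S = \{n/2\}$ with $n$ even. Otherwise $L$ has positive degree and two subcases arise: if $|S|\ge 2$ then $f_0$ has degree $1$, so the term $f_0 x_n$ pushes the numerator to total degree $2$ and the degree jumps immediately; if $S = \{\ell\}$ with $\ell \neq n/2$, the degree stays at $1$ for several steps but the $x_n$-term produced at each $\tau_P$ strictly widens the support, forcing a degree jump within the $n-\ell$ remaining steps before the cycle can close.

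The heart of the argument is to show that once the total degree exceeds $1$ it can never return to $1$, so that $\tau_P(P_1) \neq P_0$ and the period-$1$ condition fails in every case except $S = \{n/2\}$. Here positivity is essential: because all coefficients produced by $\tau_P$ remain positive, the top-degree part of the numerator is a sum of monomials with no cancellation, so the leading total degree is genuinely nondecreasing under the substitution. The only reductions come from the clearing monomial $M$ and from a common factor $d \mid L^k$, and the latter occurs only when $L \mid P_{n-\ell}|_{x_0=0}$; I would make the bookkeeping precise by following the growth of $Q(t,t,\dots,t)$ as $t\to\infty$, whose leading exponent is $\deg Q$ and is manifestly monotone under these positive operations. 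Showing that the monomial clearing and the common-factor division cannot conspire to collapse the degree back to $1$ is the main obstacle, and it is exactly where the positivity hypothesis does its work (without it, the non-palindromic examples of Theorem \ref{n3thm} survive).

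Granting this, $n$ odd admits no linear period-$1$ polynomial with positive coefficients, and for $n$ even we are reduced to $S = \{n/2\}$, i.e. $P = c_0 + c\,x_{n/2}$ with $c_0, c > 0$. For such $P$ the only step with $x_0$-dependence uses the constant $L = P|_{x_{n/2}=0} = c_0$, so every $P_j$ has the form $c'\,x_{j'} + c''$ with exactly two terms, and a short explicit computation of $\tau_P(P_1)$ shows that $P_0 = \tau_P(P_1)$ forces $c_0 = c = 1$. Finally, for $P = x_{n/2}+1$ all $P_j$ have exactly two terms, so Lemma \ref{hat}(2) gives $\widehat{P}_0 = P_0$ and Proposition \ref{period1} upgrades pseudoperiod $1$ to genuine period $1$, confirming that $x_{n/2}+1$ is the unique linear polynomial with positive coefficients generating a period $1$ seed.
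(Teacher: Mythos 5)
First, note that the statement you are proving is Conjecture~\ref{linearconj}: the paper offers no proof of it, so there is nothing to compare your argument against, and a correct proof here would be a genuine contribution rather than a reconstruction.

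That said, your proposal has a real gap at its central step, and the heuristic you lean on to fill it is false. You reduce everything to the claim that, because all coefficients stay positive, the total degree of the intermediate polynomials cannot drop back to $1$ once it has exceeded $1$ --- you say the leading degree is ``genuinely nondecreasing under the substitution'' and that only the clearing monomial $M$ and the common factor $d$ could reduce it, with positivity preventing a collapse. But the paper's own extreme polynomials (family (3) of Theorem~\ref{families}) refute this mechanism: for $P=x_1x_{n-1}+A\sum_i x_i+B$ with $A,B>0$ the seed has $P_{n-1}$ of total degree $2$ and $P_{n-2}=\tau_P(P_{n-1})=x_{n-3}+x_{n-1}+A$ of total degree $1$. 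Concretely, for $n=4$ the numerator of $G$ in the step $P_3\mapsto P_2$ factors as $\bigl(3(x_1+x_2)+2\bigr)\bigl(x_2+x_4+3\bigr)$, and dividing out $d=P\big|_{x_3=0}$ collapses the degree from $2$ to $1$ even though every coefficient in sight is positive. So positivity alone does not forbid the common-factor division from restoring degree $1$; any proof must instead show that the specific polynomials produced from a \emph{linear} $P_0$ never acquire $P\big|_{x_i=0}$ as a factor of the whole top-degree part, which is exactly the bookkeeping you defer. The evaluation-at-$(t,\dots,t)$ monotonicity argument fails for the same reason. The parts of your sketch that do hold up are the reduction of the downshift phase, the observation that $L=P\big|_{x_{n-\ell}=0}$ is constant iff $S=\{n/2\}$ with $n$ even, and the endgame: for $P=c_0+c\,x_{n/2}$ one computes $P_{n/2-1}=x_{n-1}+c$, whence closing the cycle forces $c_0=c=1$, and $x_{n/2}+1$ is indeed period $1$ as the $(n/2)$-expansion (Lemma~\ref{explemma}) of the $n=2$ polynomial $x_1+1$ from Theorem~\ref{n2thm}, with Lemma~\ref{hat}(2) supplying $\widehat{P}_0=P_0$. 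The case analysis for $|S|=1$, $\ell>n/2$ also ends by a support mismatch rather than the degree jump you describe, but that is a minor repair compared with the unproved degree-confinement claim.
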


\begin{conj}\label{symconj}
The only symmetric polynomials $P$ with positive coefficients that generate period 1 seeds are either of the form \[ P = \sum_{i=1}^{n-1}{x_i^2} + M(x_1, \ldots, x_{n-1}), \] where $M$ is any multilinear symmetric polynomial, or of the form \[ P = \sum_{1\leq i < j \leq n-1}{x_ix_j} + A\sum_{i=1}^{n-1}{x_i} + B, \] for odd $n$.
\end{conj}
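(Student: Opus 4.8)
The plan is to prove the two inclusions separately, organizing everything around the Newton support of $P$, which must be invariant under the action of $S_{n-1}$ permuting $x_1, \ldots, x_{n-1}$. I would first dispatch \emph{sufficiency}: that each $P$ of the two stated shapes generates a period 1 seed. Since a symmetric $P$ that involves one variable involves all of $x_1,\dots,x_{n-1}$, generating the seed $(\mathbf{x},\mathbf{P})$ by the recursion $P_i=\tau_P(P_{i+1})$ and checking one of the two criteria of Lemma \ref{hat} should yield $\widehat{P}_0 = P_0$; for the square-containing shape I expect criterion (1) (each intermediate $P_j$ depends on $x_0$) and for the degree-two multilinear shape criterion (2) (all $P_j$ share the same number of terms). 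Combined with Proposition \ref{period1} (or \ref{refinement}) this gives period 1. These are exactly the explicit seeds one would list as in Section 6, so sufficiency is a finite, if tedious, verification.

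The substance is \emph{necessity}. Fix a symmetric $P=\sum_\alpha c_\alpha x^\alpha$ with every $c_\alpha>0$ that generates a period 1 seed, and split on whether $P$ is multilinear. The engine of the argument is a single structural lemma: \emph{if some monomial of $P$ has a variable to a power $\ge 2$, then the only such monomials are the pure squares $x_i^2$, and each occurs with coefficient exactly $1$.} Granting this, $S_{n-1}$-invariance forces the non-multilinear part of $P$ to be precisely $\sum_{i=1}^{n-1}x_i^2$ while the remaining square-free part is an arbitrary symmetric multilinear polynomial $M$; this is the first form. If instead $P$ is multilinear, one is reduced to the symmetric case of Conjecture \ref{multilinearconj}: a degree-growth estimate for the recurrence should bound $\deg P \le 2$, leaving $\sum_{i<j}x_ix_j + A\sum_i x_i + B$, and a parity analysis of the reversal $i\mapsto n-i$ should force $n$ odd (for even $n$ the fixed middle index $n/2$ obstructs closure of the $\tau_P$-recursion, exactly the phenomenon behind Conjecture \ref{linearconj}); this is the second form.

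To prove the structural lemma I would track how $\tau_P$ and $\kappa_P$ act on supports. Writing the neighbour $P_1=\kappa_P(P_0)=\sum_k f_k x_0^k$, the period 1 identity $P_0 = \tau_P(P_1)$ says that substituting $x_0\leftarrow P|_{x_i=0}/x_n$, clearing denominators, and dividing out the gcd with $P|_{x_i=0}$ returns $P$ itself. The positivity of all $c_\alpha$ is crucial here, since it forbids the cancellations that could otherwise mask how the exponent of the freshly introduced variable grows; I expect that a variable carried to power $\ge 2$ inside a monomial that is not a pure square survives this round trip only at strictly larger degree, which is impossible for a fixed point. The normalization to coefficient $1$ should then drop out of a fixed-point relation on the leading coefficient of the same computation, mirroring the explicit cases (6) and (7) of Theorem \ref{n3thm} at $n=3$.

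The main obstacle --- and the reason this is stated as a conjecture --- is precisely the control of $\tau_P$ on a general $S_{n-1}$-invariant support: the gcd-removal step makes the supports of the intermediate $P_1,\dots,P_{n-1}$ hard to predict, and the multilinear sub-case coincides with the still-open Conjecture \ref{multilinearconj}. The leverage I would lean on that is special to the symmetric setting is the rigidity of an $S_{n-1}$-invariant Newton polytope: full symmetric invariance, together with whatever crude degree bound the period 1 recursion supplies, leaves only a short list of candidate polytopes, and I would try to enumerate these and rule out everything except the two families above.
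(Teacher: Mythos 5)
The statement you are trying to prove is stated in the paper as Conjecture~\ref{symconj}; the paper offers no proof of it, so there is nothing to compare your argument against on the necessity side. Your sufficiency discussion is fine and matches what the paper actually does: the first form is family (1) of Theorem~\ref{families} and the second is family (6), both established in Section~\ref{zoo} by exhibiting the intermediate polynomials and invoking Lemma~\ref{hat} together with Proposition~\ref{period1}. But necessity --- the word ``only'' in the statement --- is the entire content of the conjecture, and your proposal does not prove it. The ``structural lemma'' (a variable appearing to power $\geq 2$ forces the non-multilinear part to be exactly $\sum_i x_i^2$ with unit coefficients) is the crux, and you only say you ``expect'' it; the mechanism by which a non-pure-square high-power monomial ``survives only at strictly larger degree'' under the $\tau_P$ round trip is precisely the hard computation, because the gcd-removal and monomial-adjustment steps in $\tau_P$ can shift degrees in ways that are not controlled by positivity alone. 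Likewise the degree bound $\deg P \leq 2$ in the multilinear case and the parity argument for $n$ are asserted, not derived.

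Two specific points in the plan are also off. First, leaning on Conjecture~\ref{multilinearconj} buys you nothing here: that conjecture asserts the reversal symmetry $P(x_1,\ldots,x_{n-1}) = P(x_{n-1},\ldots,x_1)$ as a \emph{necessary} condition, and a fully $S_{n-1}$-symmetric polynomial satisfies it automatically, so it cannot be used to exclude candidates such as $E_3 + A E_1 + B$; you would need a genuinely new argument to rule out higher elementary symmetric functions. Second, your parity heuristic is backwards relative to Conjecture~\ref{linearconj}: there the even-$n$ case is the one admitting a period 1 linear polynomial ($x_{n/2}+1$), so ``the fixed middle index obstructs closure for even $n$'' is not the phenomenon behind that conjecture and cannot be imported to show $E_2 + AE_1 + B$ fails for even $n$. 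To your credit, you identify correctly why this remains open --- the unpredictability of the supports of $P_1,\ldots,P_{n-1}$ after gcd removal --- but identifying the obstacle is not the same as overcoming it, and as written the proposal is a research plan rather than a proof.
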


\subsection{Families of period 1 polynomials}

\begin{thm} \label{families}
The following families of polynomials $P$ are 1 periodic.

\begin{enumerate}
    \item {\it Symmetric with second powers polynomial.}
\begin{align*}
P = S + A_1E_1 + \ldots A_{n-1}E_{n-1} + A,
\end{align*}
for any coefficients $A, A_1, \ldots, A_{n-1}\in\Z$, where $E_k = \sum_{1\leq i_1 < \ldots < i_k\leq n-1}{x_{i_1}\ldots x_{i_k}}$ for all $1\leq k \leq n$ and $S = \sum_{i=1}^{n-1}{x_i^2}$.

For example, $P = x_1^2 + x_2^2 + 2x_1x_2 + 5$ when $n = 3$.
    \item {\it Sink-type binomial.}
\begin{align*}
P = x_1^{a_1}x_2^{a_2}\ldots x_{n-1}^{a_{n-1}}+1,
\end{align*}
where $a_i=0 \Longleftrightarrow a_{n-i}=0$ for all $i$. For example, $P = x_1^2x_3^3x_5 + 1$ when $n = 6$.
    \item {\it Extreme polynomial.} 
\begin{align*}
P = x_1x_{n-1} + A\cdot\sum_{i=1}^{n-1}{x_i} + B,
\end{align*}
for any coefficients $A, B\in\Z$. For example, $P = x_1x_3 + 3(x_1 + x_2 + x_3) + 2$ when $n = 4$.
    \item {\it Singleton polynomial.} If $n\in\N$ is even, let $P$ is a single variable monic irreducible polynomial that is palindromic ($x^{\deg(P)}\cdot P(1/x) = P(x)$), or antipalindromic ($x^{\deg(P)}\cdot P(1/x) = - P(x)$), or $P = x_{n/2}^2 + Ax_{n/2} + B$ for any $A, B\in\Z$. For example, $P = x_2^2 + 2x_2 - 7$ when $n=3$.
    \item {\it Chain polynomial.} If $n\in\N, n>2$ is odd,
\begin{align*}
P = \sum_{i=1}^{n-2}{x_ix_{i+1}} + A\cdot\sum_{i=1}^{n-1}{x_i} + B,
\end{align*}
for any coefficients $A, B\in\Z$. For example, $P = x_1x_2 + x_2x_3 + x_3x_4 + 2(x_1+x_2+x_3+x_4) + 3$ when $n = 5$.
    \item {\it Multilinear symmetric polynomial.} If $n\in\N, n > 2$, 
\begin{align*}
P = E_2 + A\cdot E_1 + B,
\end{align*}
for any coefficients $A, B\in\Z$, where the $E_i$ are the elementary symmetric polynomials. For example,  $P = x_1x_2 + x_2x_3 + x_1x_4 + x_2x_3 + x_2x_4 + x_3x_4 - 3(x_1+x_2+x_3+x_4) + 1$ when $n = 5$.
    \item {\it $r$-Jumping polynomial.} If $r, n\in\N$ are such that $n\geq 2r+1$ and $n \equiv 1 \pmod{r}$,
\begin{align*}
P = \sum_{i = 0}^{\frac{n-1}{r} - 1}{x_{ri + 1}\cdot x_{ri + r}} + A,
\end{align*}
for any $A\in\Z$. For example, $P = x_1x_3 + x_4x_6$ when $n = 7$.
    \item {\it $r$-Hopping polynomial.} If $r, n\in\N$ be such that $n\geq 2r + 2$ and $n \equiv 1 \pmod{r}$,
\begin{align*}
P = \sum_{i = 0}^{\frac{n-1}{r} - 1}{x_{ri + 1}\cdot x_{ri + r}} + A\cdot\sum_{i=0}^{\frac{n-1}{r}-2}{x_{ri+r}\cdot x_{ri+r+1}} + B,
\end{align*}
for any $A, B\in\Z$. For example, $P = x_1x_3 - 2x_3x_4 + x_4x_6 + 3$ when $n = 7$.

{\bf Note:} The $r$-Jumping polynomials are special cases of the $r$-Hopping polynomials (when $A = 0$). We distinguish them because we found a conserved quantity for sequences generated by $r$-Jumping polynomials, but not by $r$-Hopping polynomials (see Section \ref{conserved}).
    \item {\it Flip-symmetric binomial.} If $L, R \subset [n-1]$ are disjoint subsets such that $i\in L \Longleftrightarrow n-i\in L$ and $i\in R \Longleftrightarrow n-i\in R$, and if $a: L\cup R\rightarrow\N$ is any map into the positive integers, then,
\begin{align*}
P = \prod_{i\in L}{x_i^{a(i)}} + \prod_{i\in R}{x_i^{a(i)}}.
\end{align*}
For example, $P = x_1^3x_7^2 + x_4^3x_2x_6$ when $n = 8$.

{\bf Note:} The Somos-4 and Somos-5 polynomials ($x_1x_3 + x_2^2$ and $x_1x_4 + x_2x_3$) are particular cases of flip-symmetric binomials. The family (2) of sink-type polynomials are also particular cases of flip-symmetric polynomials (when $R = \emptyset$).
    \item {\it Balanced polynomial.} If $L, R \subset [n-1]$ are disjoint subsets such that $i\in L \Longleftrightarrow n-i\in L$ and $i\in R \Longleftrightarrow n-i\in R$, and $a: L\cup R\rightarrow\N$ is any map into the positive integers. Then for any $m>1$, write $\displaystyle M_1=\prod_{i\in L}{x_i^{a(i)}}, M_2=\prod_{i\in R}{x_i^{b(i)}}$ and,
\begin{align*}
P = M_1^m + M_2^m + \sum_{i=1}^{\lfloor \frac{m}{2}\rfloor} {A_i\cdot\left(M_1^{i}M_2^{m-i} + M_1^{m-i}M_2^{i}\right)},
\end{align*}
for arbitrary coefficients $A_i\in\Z$, $1\leq i\leq \lfloor\frac{m}{2}\rfloor$. For example, $P = x_3^{12}x_6^4 + x_2^4x_7^8 + 2(x_2x_3^9x_6^3x_7^2 + x_2^3x_3^3x_6x_7^6)+3x_2^2x_3^6x_6^2x_7^4$ when $n = 9$.
     \item {\it Vector sum polynomial.} For $a_1, \ldots, a_{n-1}\in\N$ and a finite set $B$ of vectors $(b_1, \ldots, b_{n-1})\in\N^{n-1}$ such that $0 < b_i < a_i$ for all $i$, then,
\begin{align*}
P = 1 + x_1^{a_1}\ldots x_{n-1}^{a_{n-1}} + \sum_{b\in B}{(C_b\cdot x_1^{b_1}\ldots x_{n-1}^{b_{n-1}} + C_b\cdot x_1^{a_1 - b_1}\ldots x_{n-1}^{a_{n-1}-b_{n-1}})},
\end{align*}
for arbitrary coefficients $C_b\in\N$. For example, $P = 1 + x_1^3x_2^2x_3^4x_4^2 + 2x_1x_2x_3^2x_4 + 2x_1^2x_2x_3^2x_4$ when $n = 5$.
    \item {\it Little Pi polynomial.} For $k, n\in\N$ such that $n > 2k$ and $n \neq 3k$, then
\begin{align*}    
P = Ax_k + Ax_{n-k} + x_{2k}x_{n - 2k},
\end{align*}
for any $A\in\Z$. For example, $P = 2x_2 + 2x_5 + x_4x_3$ when $n = 7$, $k = 2$.
    \item {\it Pi polynomial.} For $k, n, a_1, b_1, a_2, b_2\in\N$ such that $n > 2k$, $n \neq 3k$ and $a_1+b_1=a_2+b_2$, then
\begin{align*}    
P = Ax_k^{a_1}x_{b_1}^b + Bx_k^{a_2}x_{n-k}^{b_2} + x_{2k}x_{n - 2k}
\end{align*}
for any $A, B\in\Z$. For example, $P = -2x_2^1x_6^4 + 3x_2^2x_6^3 + x_4^2$ when $n = 8$, $k = 2$.
\end{enumerate}
\end{thm}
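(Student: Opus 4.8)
The plan is to establish all thirteen families through a single mechanism and then specialize. For a polynomial $P = P_0$ drawn from any family, I generate the candidate seed $(\mathbf{x}, \mathbf{P})$ by the algorithm of Section~\ref{lpalgebras}: set $P_{n-1}$ to be the downshift of $P_0$, define $P_i = \tau_P(P_{i+1})$ recursively for $i = n-2, \dots, 1$, and then invoke Proposition~\ref{period1}. This reduces the theorem, for each family, to two verifications: the closure (pseudoperiod $1$) identity $P_0 = \tau_P(P_1)$, and the equality $\widehat{P}_0 = P_0$. The second is never checked by hand; instead I appeal to Lemma~\ref{hat}. Condition~(2) of that lemma handles every family whose chain polynomials all share a common term count --- in particular the binomial families, sink-type~(2) and flip-symmetric~(9), where that count is $2$ --- and condition~(1), that $P_j$ depends on $x_0$ exactly when $P_0$ depends on $x_j$, handles the remaining families. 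Thus the real content lies in computing the exchange-polynomial chain and confirming it closes up.

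The per-family work proceeds as follows. First, using the explicit shape of $P$, I postulate closed forms for $P_1, \dots, P_{n-1}$, with the worked example attached to each family fixing the pattern. Second, rather than verifying all $n-1$ applications of $\tau_P$ separately, I verify a single generic step $P_i = \tau_P(P_{i+1})$ and close an induction: concretely, substitute $x_0 \leftarrow (P|_{x_i=0})/x_n$ into $P_{i+1}$, divide out the factor shared with $P|_{x_i=0}$, clear the resulting Laurent monomial, and check that the postulated $P_i$ reappears after a downshift. Irreducibility of each $P_i$ and nondivisibility by the $x_j$ are then automatic from Propositions~\ref{nodepend} and~\ref{irreducible}. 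For the many symmetric families --- symmetric-with-second-powers~(1), extreme~(3), chain~(5), multilinear-symmetric~(6), and the flip-symmetric and balanced binomials~(9),~(10), whose index sets obey $i \in L \Leftrightarrow n-i \in L$ --- I would halve this labor via Proposition~\ref{refinement}: generating the chain downward by $\tau_P$ from the top and upward by $\kappa_P$ from the bottom, the palindromic symmetry $P(x_1, \dots, x_{n-1}) = P(x_{n-1}, \dots, x_1)$ forces the two halves to be mirror images, so one need only confirm they agree at the middle index $k = \lfloor n/2 \rfloor$. The singleton family~(4) is the easiest, since each $P_i$ is univariate and the analysis collapses to the $n=2$ computation underlying Theorem~\ref{n2thm}.

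The main obstacle is the common-factor (gcd) step inside $\tau_P$, which must cancel \emph{exactly} so that the closure identity $P_0 = \tau_P(P_1)$ reproduces $P$ with its prescribed coefficients and introduces no spurious factors or monomials. For the families carrying general coefficient data --- the vector-sum polynomial~(12), the balanced polynomial~(10), and the Pi polynomial~(13) --- the substitution $x_0 \leftarrow (P|_{x_i=0})/x_n$ produces denominators whose numerators must factor through $P|_{x_i=0}$ in precisely the right multiplicity; I expect proving that the gcd removes neither too much nor too little, uniformly in the parameters $C_b$, $A_i$, $a_i$, $b_i$, to be the delicate point. A closely related subtlety is showing, for each parametrized family, that the stated constraints (for instance $a_i = 0 \Leftrightarrow a_{n-i} = 0$, or $a_1 + b_1 = a_2 + b_2$ for the Pi polynomial) are exactly what makes the chain periodic, so that every hypothesis is genuinely used. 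Once these gcd identities are in hand, the appeal to Lemma~\ref{hat} for $\widehat{P}_0 = P_0$ and the application of Proposition~\ref{period1} are routine bookkeeping.
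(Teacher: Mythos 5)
Your strategy is exactly the paper's: build the seed by iterating $\tau_P$ downward from $P_{n-1}$ (and, where convenient, $\kappa_P$ upward), check the closure identity, and then obtain $\widehat{P}_0 = P_0$ from Lemma \ref{hat} so that Proposition \ref{period1} applies. The paper even confirms your division of labor for Lemma \ref{hat}: condition (1) covers most families and condition (2) the rest --- though the paper's list of families needing condition (2) is the flip-symmetric family (9) \emph{and the balanced family (10)}, not sink-type (2); you should check whether condition (1) is actually available for (10), whose intermediate polynomials inherit the variable-dependence pattern of the flip-symmetric chain.

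The gap is that essentially all of the content of the paper's proof is the part you defer: the explicit closed forms of $P_1, \ldots, P_{n-2}$ for each family, which occupy the whole of Section \ref{zoo}. Two features of that computation resist your ``verify one generic step and induct'' plan. First, the chains are not homogeneous: at most indices $i$ the polynomial $P_{i+1}$ does not depend on $x_0$, so $\tau_P$ is a bare downshift, and genuine mutations occur only at a handful of family-specific breakpoints (e.g.\ $i \in \{k, 2k, n-2k, n-k\}$ for the Pi families, $i \equiv 0 \pmod{k}$ in the expansion lemma, $i \in \{p,q,r,n-p,n-q,n-r\}$ for Gale--Robinson). A single generic step cannot cover both regimes, and for the Little Pi and Pi families the paper needs four separate cases according to the ordering of $k, 2k, n-2k, n-k$, with visibly different intermediate polynomials in each. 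Second, for the jumping, hopping and Pi families the intermediate polynomials are not small perturbations of $P$ --- they are sums of several monomially-weighted shifted copies $F_a$ of $P$ --- so postulating the closed form from the worked example is itself the hard step, not bookkeeping. Your identification of the gcd cancellation inside $\tau_P$ as the delicate point is correct, but without the explicit formulas there is nothing yet to cancel, so as written the argument establishes the framework of the paper's proof but not its substance.
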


\begin{rem}
In each case of the theorem above, we omitted saying that the coefficients and exponents are such that $P$ is irreducible and not divisible by any $x_j$.
The following important corollary will also follow easily from the proof of Theorem \ref{families} and Theorem \ref{hat}.
\end{rem}
\begin{rem}
Recently, Hone and Ward found independently the Laurent phenomenon for extreme polynomials (family (3) in Theorem \ref{families}). They do a thorough study of this family of polynomials in \cite{hw}.
\end{rem}

\begin{cor}\label{laurentproperty}
All polynomials $P$ from Theorem \ref{families} generate Laurent phenomenon sequences.
\end{cor}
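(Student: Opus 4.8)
The plan is to deduce the corollary from Theorem~\ref{families} together with the pseudoperiod-to-period machinery developed in Section~\ref{lpalgebras}. The key observation is that Corollary~\ref{laur} already supplies the implication we want: if $P$ generates a genuine period $1$ seed, then the recurrence $x_{m+n} = P(x_{m+1},\ldots,x_{m+n-1})/x_m$ produces only Laurent polynomials. Thus it suffices to upgrade the conclusion of Theorem~\ref{families}---that each listed family is $1$ periodic in the sense of pseudoperiod $1$, i.e. that $P_0 = \tau_P(P_1)$ holds for the generated seed---to the statement that the seed has honest period $1$. By Proposition~\ref{period1} (or its refinement, Proposition~\ref{refinement}), this upgrade requires exactly the equality $\widehat{P}_0 = P_0$ of the exchange Laurent polynomial with the exchange polynomial.

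First I would recall the explicit intermediate exchange polynomials $P_1,\ldots,P_{n-1}$ that are computed during the proof of Theorem~\ref{families} for each family. With these in hand, the task reduces to verifying, family by family, that one of the two sufficient conditions of Lemma~\ref{hat} is met. For the binomial families---the sink-type binomials (family (2)) and the flip-symmetric binomials (family (9))---every intermediate $P_j$ is itself a binomial, so condition~(2) of Lemma~\ref{hat} applies immediately with $d = 2$. The same strategy handles the balanced and vector-sum families (families (10) and (11)), where the number of terms is preserved under $\tau_P$ and $\kappa_P$ by the symmetry built into their definitions. For the families with a distinguished monomial structure together with a constant term (the symmetric-with-second-powers, extreme, chain, multilinear symmetric, and Pi-type families), I would instead verify condition~(1): that $P_j$ depends on $x_0$ whenever $P_0$ depends on $x_j$, reading the dependence off the support of the computed $P_j$.

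Once $\widehat{P}_0 = P_0$ is established in every case, Proposition~\ref{period1} certifies that the generated seed is a period $1$ seed, and Corollary~\ref{laur} then yields the Laurent phenomenon, completing the proof.

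I expect the main obstacle to be the bookkeeping in the second paragraph: unlike the uniform binomial case, the families with constant terms and only partial variable dependence---in particular the $r$-jumping, $r$-hopping, and little Pi / Pi families, where $P_0$ depends on only a sparse set of variables---force a careful analysis of the dependence graph to confirm condition~(1) of Lemma~\ref{hat}. The verification is not conceptually hard, but it is precisely where the structural information produced in the proof of Theorem~\ref{families} is genuinely needed, and one must be attentive to the boundary families where neither condition is obviously satisfied, so that the term-counting argument of Lemma~\ref{hat}(2) must be invoked in place of the dependence argument.
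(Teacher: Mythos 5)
Your proposal matches the paper's argument: the paper likewise combines the explicit intermediate polynomials written down in Section \ref{zoo} for each family with Lemma \ref{hat} (condition (1) for most families, condition (2) for the flip-symmetric and balanced families of Subsections \ref{zooex9} and \ref{zooex10}), and then invokes Proposition \ref{period1} and Corollary \ref{laur}. The only difference is cosmetic: you also route families (2) and (11) through the term-counting condition (2), which works equally well since their intermediate polynomials all have the same number of terms.
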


\begin{conj}
Let $k, n, a_1, b_1, a_2, b_2\in\N$ be such that $n > 2k$, $n\neq 3k$ and $a_1+b_1=a_2+b_2$. Consider the polynomial
\begin{align*}    
P = (Ax_k^{a_1}x_{n-k}^{b_1} + Bx_k^{a_2}x_{n-k}^{b_2})\cdot M + x_{2k}x_{n - 2k},
\end{align*}
for any $A, B\in\Z$ and monomial $\displaystyle M = \prod_{i=1\atop i\neq 2k,n-2k}^{n-1}{x_i^{c_i}}$, where $c_i = 0 \Longleftrightarrow c_{n-i} = 0$ for all $i$.

Then $P$ is a period 1 polynomial and generates a Laurent phenomenon sequence.
\end{conj}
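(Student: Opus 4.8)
The plan is to follow the seed-generation algorithm of Section~\ref{lpalgebras}. Starting from $P_0 = P$ and its downshift $P_{n-1}$, I would compute the intermediate exchange polynomials by applying $\tau_P$ from the top and $\kappa_P$ from the bottom, meeting in the middle, exactly as in Proposition~\ref{refinement}. Having produced the candidate seed, I would verify the pseudoperiod~1 condition $P_0 = \tau_P(P_1)$, and then promote pseudoperiod~1 to genuine period~1 by establishing $\widehat{P}_0 = P_0$ via Lemma~\ref{hat}; Corollary~\ref{laur} then delivers the Laurent phenomenon. This is the same scheme that proves the Pi polynomial (family~(13) of Theorem~\ref{families}, which is the case $M = 1$) and the flip-symmetric binomials (family~(9)), so the real task is to show the argument survives the insertion of the monomial factor $M$.

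The core is to track the three-term structure $P = T_1 + T_2 + T_3$, with $T_1 = A x_k^{a_1} x_{n-k}^{b_1} M$, $T_2 = B x_k^{a_2} x_{n-k}^{b_2} M$, and $T_3 = x_{2k} x_{n-2k}$, under the maps $\tau_P$ and $\kappa_P$. The decisive sub-computations are the specializations $P|_{x_i = 0}$ entering step~(1) of $\tau_P$: for $i \in \{k, n-k\}$ this generically annihilates both $T_1$ and $T_2$ and leaves the monomial $T_3$, whereas for $i \in \{2k, n-2k\}$ it leaves the binomial $(A x_k^{a_1} x_{n-k}^{b_1} + B x_k^{a_2} x_{n-k}^{b_2}) M$. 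The hypotheses $n > 2k$ and $n \neq 3k$ are precisely what force the outer index pair $\{k, n-k\}$ and the inner pair $\{2k, n-2k\}$ to be disjoint, since each of the coincidences $k = n-2k$ and $2k = n-k$ amounts to $n = 3k$; the only admissible collapse is $2k = n - 2k$ at $n = 4k$, where $T_3 = x_{2k}^2$. The flip-symmetry condition $c_i = 0 \Longleftrightarrow c_{n-i} = 0$ on $M$ makes the entire configuration invariant under the index reversal $i \mapsto n - i$, and this symmetry is what should match the polynomials produced from the top by $\tau_P$ against those produced from the bottom by $\kappa_P$, yielding pseudoperiod~1; it generalizes the flip-symmetry already imposed on families~(9) and~(10).

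To obtain $\widehat{P}_0 = P_0$ I would first attempt part~(2) of Lemma~\ref{hat}: if the number of terms of $P$ --- three when $A, B \neq 0$ and $(a_1, b_1) \neq (a_2, b_2)$, fewer otherwise --- is preserved by every mutation, then all the $P_j$ share this term count and the conclusion is immediate. Where preservation of the term count is not transparent, I would fall back on part~(1), checking directly from the explicit intermediate polynomials that $P_j$ depends on $x_0$ whenever $P_0$ depends on $x_j$, with Proposition~\ref{nodepend} controlling the remaining dependencies.

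The hard part will be the common-factor removal in step~(2) of $\tau_P$ and the monomial normalization in step~(3), now carried out with the extra factor $M$ present. One must show that after the substitution $x_0 \leftarrow (P|_{x_i = 0})/x_n$ into $P_{i+1}$, the factors shared with $P|_{x_i = 0}$ are only the expected monomial ones, so that no genuine polynomial cancellation disturbs the three-term shape or alters the term count; here the constraint $a_1 + b_1 = a_2 + b_2$, which equalizes the total degree of $T_1$ and $T_2$ in $x_k$ and $x_{n-k}$, together with the disjointness enforced by $n \neq 3k$, are the inputs that should exclude such accidents. The genuine difficulty --- and the reason the statement remains a conjecture --- is to carry this through uniformly for all exponent patterns $a_1, b_1, a_2, b_2$, all coefficients $A, B$, and every admissible monomial $M$, including the degenerate cases (some exponent zero, $n = 4k$, or $A = 0$ or $B = 0$) where the specializations $P|_{x_i=0}$ and the resulting term counts behave differently.
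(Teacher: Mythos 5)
The statement you are addressing is stated in the paper as a \emph{conjecture}: the authors give no proof of it, so there is nothing in the paper to match your argument against. Your proposal correctly situates the statement as the Pi polynomial family (family (13) of Theorem \ref{families}) multiplied by a flip-symmetric monomial $M$, and the machinery you invoke --- generate the seed with $\tau_P$ and $\kappa_P$ meeting in the middle as in Proposition \ref{refinement}, check pseudoperiod $1$, then pass to genuine period $1$ via Lemma \ref{hat} and conclude by Proposition \ref{period1} and Corollary \ref{laur} --- is exactly the scheme the paper uses for every family it does prove. Your case analysis of the specializations $P|_{x_i=0}$ and of the index coincidences forced by $n=3k$ and $n=4k$ is also sound.

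The genuine gap is that the proposal is a plan rather than a proof: you never write down the candidate intermediate polynomials $P_i$ for general $M$, never verify $\tau_P(P_{i+1})=P_i$ or $P_0=\tau_P(P_1)$, and never establish $\widehat{P}_0=P_0$. You acknowledge this yourself, and it is precisely the part that the paper also leaves open. Note that for the Pi polynomial the paper's explicit seeds require a four-way case split on the relative order of $k,2k,n-2k,n-k$, with each intermediate polynomial depending delicately on $a_1,b_1,a_2,b_2$; inserting $M$ multiplies this bookkeeping considerably, and the claim that ``no genuine polynomial cancellation disturbs the three-term shape'' in step (2) of $\tau_P$ is exactly the assertion that needs proof. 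One concrete correction to your fallback strategy: part (2) of Lemma \ref{hat} cannot succeed here even when $M=1$, since already in Case 1 of the Pi seed the intermediate polynomial $P_{2k}=x_0x_{3k}^{a_2+b_2}+x_k^{a_1+b_1}x_{4k}$ is a binomial while $P_0$ is a trinomial, so the exchange polynomials do not all have the same number of terms; you must rely on condition (1) of that lemma, i.e.\ verify directly from the (yet-to-be-computed) intermediate polynomials that $P_j$ depends on $x_0$ whenever $P_0$ depends on $x_j$.
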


We also will prove the following lemmas that can be applied to known period 1 polynomials to yield new ones:

\begin{lem}\label{explemma} {\bf (Expansion Lemma)}
If $F = F(x_1,x_2,\ldots, x_{n-1})$ generates a period 1 seed,
then for any $k \in \N$, so does the polynomial $G = G(x_1,x_2,\ldots,x_{nk-1}) = F(x_{k},x_{2k},\ldots,x_{(n-1)k})$. We call $G$ the $k$-expansion of $F$.
\end{lem}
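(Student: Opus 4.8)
The plan is to show that the seed generated by the $k$-expansion $G$ is, up to a simple relabeling of variables, an interleaving of $k$ disjoint copies of the seed generated by $F$, one copy for each residue class modulo $k$. Write $N = nk$ for the size of the $G$-seed, and for each $s \in \{0, 1, \ldots, k-1\}$ let $E_s$ denote the injective relabeling of variables $x_i \mapsto x_{ik+s}$ (extended to the fresh auxiliary index by $x_n \mapsto x_{nk}$). Each $E_s$ is a ring isomorphism of $\Q[x_0, x_1, \ldots]$ onto the subring generated by the variables whose index is $\equiv s \pmod k$, so it commutes with taking gcd's, with polynomial division, and with the monomial-clearing operations used in the definition of $\tau$. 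Denoting by $P^F_0, \ldots, P^F_{n-1}$ the exchange polynomials of the $F$-seed and by $P^G_0, \ldots, P^G_{N-1}$ those of the $G$-seed, the central claim I would prove is
\[ P^G_{jk+s} = E_s(P^F_j) \qquad (0 \le j \le n-1, \ 0 \le s \le k-1). \]

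The key structural fact driving the induction is a commutation identity between the maps $\tau$ for $G$ and for $F$. First, since $E_{s+1}(P^F_j)$ involves no variable of index $0$ for $s \ge 0$, the map $\tau_G$ acts on it as a bare downshift, and a downshift sends $E_{s+1}$ to $E_s$; this realizes the $k-1$ trivial steps inside each block. Second, at a block boundary I would prove the identity
\[ \tau_G \circ E_0 = E_{k-1} \circ \tau_F, \]
valid on the relevant polynomials. This follows by tracing the three steps in the definition of $\tau$: the substitution $x_0 \leftarrow (G|_{x_{jk}=0})/x_{nk}$ equals $E_0$ applied to the substitution $x_0 \leftarrow (P^F_0|_{x_j=0})/x_n$ (using $G = E_0(F)$ and $E_0(x_n) = x_{nk}$); the gcd-removal and monomial-clearing steps commute with the isomorphism $E_0$; and the final single downshift converts an $E_0$-image into the $E_{k-1}$-image of the downshift, because $x_{ik} \mapsto x_{ik-1} = x_{(i-1)k+(k-1)}$. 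Starting from the base case $P^G_{N-1} = E_{k-1}(P^F_{n-1})$ (the downshift of $G$), a downward induction on $jk+s$ then yields the displayed formula; the block-boundary steps use precisely the relations $P^F_{j-1} = \tau_F(P^F_j)$, including the pseudoperiod-$1$ relation $P^F_0 = \tau_F(P^F_1)$ that holds because $F$ is period $1$.

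With the formula in hand, I would verify the two hypotheses of Proposition \ref{period1} for $G$. Pseudoperiod $1$ is immediate: $P^G_1 = E_1(P^F_0)$ depends on no variable of index $0$, so $\tau_G(P^G_1)$ is its downshift $E_0(P^F_0) = G = P^G_0$. For the condition $\widehat{P}^G_0 = P^G_0$, I would use the criterion extracted from the proof of Lemma \ref{hat}, namely that $\widehat{P}_0 = P_0$ is equivalent to $P_0|_{x_\ell = 0}$ being non-divisible by $P_\ell$ for every $\ell \neq 0$. When $\ell$ is not a multiple of $k$, $G|_{x_\ell=0} = G$ is supported on variables of index $\equiv 0$, while $P^G_\ell$ is supported on index $\equiv s \not\equiv 0$, so no nontrivial divisibility is possible. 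When $\ell = ik$, the statement that $G|_{x_{ik}=0} = E_0(P^F_0|_{x_i=0})$ is not divisible by $P^G_{ik} = E_0(P^F_i)$ follows by applying the isomorphism $E_0$ to the corresponding non-divisibility for the $F$-seed, which holds because $\widehat{P}^F_0 = P^F_0$. Thus $G$ generates a period $1$ seed.

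The main obstacle is the careful verification of the commutation identity $\tau_G \circ E_0 = E_{k-1} \circ \tau_F$: one must check that each of the gcd-extraction, division, and monomial-normalization steps in the definition of $\tau$ is genuinely transported by the relabeling $E_0$, and keep exact track of the single downshift and of the fresh auxiliary variable $x_n \mapsto x_{nk}$ so that the block index decrements and the residue wraps from $0$ to $k-1$ correctly. Everything else is bookkeeping with indices modulo $k$.
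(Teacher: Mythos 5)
Your proof is correct and takes essentially the same route as the paper's: the paper likewise defines the candidate seed for $G$ by interleaving $k$ relabeled copies of the $F$-seed (its $G_{k(i-1)+j}=F_i(x_j,\ldots,x_{k(n-1)+j})$ is exactly your $E_s(P^F_j)$), observes that within each block $\tau$ acts as a bare downshift, and transports the relations $\tau_F(F_i)=F_{i-1}$ across block boundaries via the relabeling. Your explicit commutation identity $\tau_G\circ E_0=E_{k-1}\circ\tau_F$ and your verification of $\widehat{P}_0=P_0$ via the divisibility criterion from Lemma \ref{hat} are carried out more carefully than in the paper, which asserts the boundary step directly and leaves the $\widehat{P}_0$ condition implicit.
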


\begin{lem}\label{reflemma} {\bf (Reflection Lemma)}
If $F = F(x_1,x_2,\ldots, x_{n-1})$ generates a period 1 seed,
then so does $G = G(x_1,x_2,\ldots,x_{n-1}) = F(x_{n-1},x_{n-2},\ldots,x_1)$.
\end{lem}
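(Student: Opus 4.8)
The plan is to exhibit the period 1 seed generated by $G$ as the image, under a reflection automorphism, of the period 1 seed generated by $F$, exploiting the fact that reflecting the indices reverses the direction of mutation and therefore interchanges the maps $\tau$ and $\kappa$. Throughout, let $\{P_0, \ldots, P_{n-1}\}$ denote the period 1 seed generated by $F$, so that $P_0 = F$, $P_{n-1}$ is the downshift of $F$, and (since the seed is period 1) both $P_i = \tau_F(P_{i+1})$ and $P_{i+1} = \kappa_F(P_i)$ hold for all valid $i$.

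First I would introduce the $\Q$-algebra automorphism $\rho$ of the Laurent ring in the variables $\{x_j\}_{j\in\Z}$ defined by $\rho(x_j) = x_{n-1-j}$. It is an involution, it carries monomials to monomials, it preserves the number of terms, irreducibility, gcd's and divisibility, and it interchanges the two auxiliary variables of the constructions, $\rho(x_n) = x_{-1}$ and $\rho(x_{-1}) = x_n$. A one-line check shows that $\rho$ intertwines the shift operators, $\rho\circ(\text{upshift}) = (\text{downshift})\circ\rho$. Directly from the definition of $G$ one gets $\rho(P_{n-1}) = G$ and $\rho(P_0) = \text{downshift of } G$, and $\rho$ sends a polynomial independent of $x_{i}$ to one independent of $x_{n-1-i}$, so that $\rho(P_{n-1-j})\in\PP_j$ is a valid seed entry.

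The technical heart is the intertwining relation
$$\tau_G \circ \rho = \rho \circ \kappa_F,$$
which I would prove by comparing the three defining steps of $\tau_G$ and $\kappa_F$ term by term. The substitution steps agree exactly: applying $\rho$ to the substitution $x_{n-1}\leftarrow (F'|_{x_i=0})/x_{-1}$ that defines $\kappa_F$ produces $x_0 \leftarrow (G|_{x_{n-1-i}=0})/x_n$, since $\rho(F') = G$, and this is precisely the substitution defining $\tau_G$ on a polynomial in $\PP_{n-1-i}$. Because $\rho$ is a ring automorphism preserving gcd's and divisibility, the removal of the common factor with $F'|_{x_i=0}$ (respectively $G|_{x_{n-1-i}=0}$) and the monomial normalization commute with $\rho$; finally the upshift in $\kappa_F$ turns into the downshift in $\tau_G$ by the intertwining of shifts. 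Propositions \ref{nodepend} and \ref{irreducible} guarantee all intermediate objects are legitimate.

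With the intertwining in hand I set $Q_j := \rho(P_{n-1-j})$. Then $Q_0 = G$, $Q_{n-1}$ is the downshift of $G$, and using $\kappa_F(P_{n-2-j}) = P_{n-1-j}$ the relation gives $\tau_G(Q_{j+1}) = \rho(\kappa_F(P_{n-2-j})) = \rho(P_{n-1-j}) = Q_j$ for all $0\leq j\leq n-2$. Hence the $Q_j$ are exactly the polynomials produced by the generation procedure for $G$, and in particular they satisfy the pseudoperiod 1 relation $Q_0 = \tau_G(Q_1)$. It then remains only to check $\widehat{Q}_0 = Q_0$, after which Proposition \ref{period1} concludes. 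Since $\rho$ is an automorphism, the non-divisibility condition that defines $\widehat{Q}_0 = Q_0$ transforms into the statement $\widehat{P}_{n-1} = P_{n-1}$ for the $F$-seed; this holds because a period 1 seed satisfies the hypotheses of Lemma \ref{hat} at every index (condition (2) is index-free, while condition (1) transfers through the rotational symmetry of the seed). I expect the main obstacle to be the careful bookkeeping in the intertwining relation — confirming that the auxiliary variables and the monomial-normalization step behave correctly under $\rho$ — together with this final transfer of the hat condition from index $0$ to index $n-1$, which relies on the symmetry of a period 1 seed rather than on its definition at index $0$ alone.
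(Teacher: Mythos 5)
Your proposal is correct and follows essentially the same route as the paper: the paper likewise defines $G_i = F_{n-1-i}(x_{n-1},\ldots,x_1)$ and verifies $\tau_G(G_i) = G_{i-1}$ by observing that the substitution $x_j \mapsto x_{n-1-j}$ carries the defining expression for $\kappa_F(F_{n-i})$ into that for $\tau_G(G_i)$, which is exactly your intertwining relation $\tau_G\circ\rho = \rho\circ\kappa_F$. Your added discussion of transferring the condition $\widehat{Q}_0 = Q_0$ is a point the paper handles separately via Lemma \ref{hat} rather than inside this proof, but it does not change the substance of the argument.
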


\begin{rem}
Observe that the reflection lemma, applied to the families of polynomials in Theorem \ref{families}, always gives another member of the same family.
\end{rem}

\section{Polynomials arising from double quivers}\label{doublequiversec}

\subsection{Binomial Seeds and Double Quivers}
In this section we find all period 1 binomials with a mild mutuality condition. To do this, we first introduce a new representation for binomial seeds, which we call a double quiver. The main constraint of a normal quiver that our double quiver removes is that binomial seeds represented by a quiver have to be mutual, i.e., if $x_i$ appears in $P_j$, then $x_j$ appears in $P_i$ with the same degree.
\begin{df}
A {\em double quiver} $Q$ is a finite set of vertices with directed half-edges between vertices. Between each pair of vertices $i$ and $j$, there can be edges between them attached at $i$, as well as edges between them attached at $j$. We allow multiple half-edges at each vertex, but not $2$-cycles, i.e., there cannot be edges from $i$ to $j$ as well as edges from $j$ to $i$ all attached at $i$. We also do not allow self-loops.

The {\em B-matrix} $B = (b_{i,j})_{n\times n}$ of a double quiver with $n$ vertices is defined as follows. The magnitude $|b_{i,j}|$ is the number of half-edges between vertex $i$ and vertex $j$ that are attached at $i$. If the edges are outgoing from vertex $i$, then $b_{i,j} > 0$; if the edges are incoming to $i$, then $b_{i,j} < 0$. Conversely, each $n\times n$ integer matrix with $0$'s in its diagonal corresponds to a double quiver. For convenience, we will index the rows and columns of $B$ from $0$ to $n-1$. The (LP algebra) seed corresponding to a B-matrix $B$ is $\bf (x, P)$, where $\mathbf{x} = \{x_0, \ldots, x_{n-1}\}$ and the intermediate polynomials are, for all $i$:
\[ P_i = \prod_{j : b_{i, j} > 0}{x_j^{b_{i, j}}} + \prod_{j: b_{i, j} < 0}{x_j^{-b_{i, j}}}. \]
\end{df}

\begin{exam}
Figure \ref{DQ1} shows a double quiver with $3$ vertices. There is a half-edge from $x_1$ to $x_0$ attached at $x_0$, a half-edge from $x_1$ to $x_0$ attached at $x_1$, two half-edges from $x_0$ to $x_2$ attached at $x_0$, a half-edge from $x_0$ to $x_2$ attached at $x_2$, three half-edges from $x_2$ to $x_1$ attached at $x_1$ and no half-edges from $x_1$ to $x_2$ attached at $x_2$.
\end{exam}

\begin{df}
A vertex $i$ of a double quiver is {\em{mutable}} if, whenever there are half-edges between $i$ and $j$ attached at $j$, then there are also half-edges between $i$ and $j$ attached at $i$. In terms of the B-matrix, vertex $i$ is mutable if for all other vertices $j$,
$b_{j,i} \neq 0$ implies $b_{i,j} \neq 0$.
\end{df}

\begin{exam}
In the double quiver of figure \ref{DQ1}, $x_0$ and $x_1$ are mutable, but $x_2$ is not mutable since there are half-edges from $x_2$ to $x_1$ attached at $x_1$, but no half-edges between $x_1$ and $x_2$ attached at $x_2$.
\end{exam}
\setlength{\unitlength}{0.5cm}
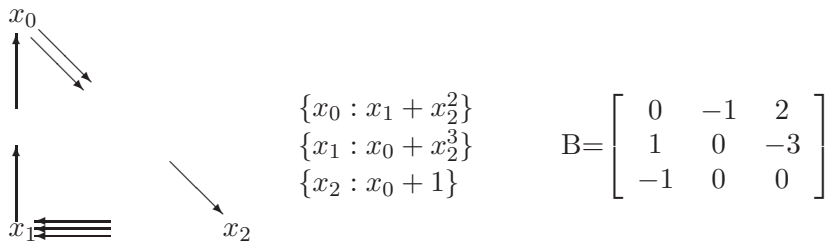
\begin{figure}[H]
\begin{center}
\begin{picture}(20,6)
\linethickness{0.5pt}

\put(-0.7,-0.2){$x_1$}
\put(-0.7,5.5){$x_0$}
\put(5,-0.2){$x_2$}

\put(2,0){\vector(-1,0){2}}
\put(2,0.2){\vector(-1,0){2}}
\put(2,-0.2){\vector(-1,0){2}}
\multiput(-0.5,0.2)(0,3){2}{\vector(0,1){2}}
\put(0.1,5.3){\vector(1,-1){1.4}}
\put(3.6,1.8){\vector(1,-1){1.4}}
\put(-0.1,5.1){\vector(1,-1){1.4}}

\put(7,3){$\{x_0: x_1+x_2^2\}$}
\put(7,2){$\{x_1: x_0+x_2^3\}$}
\put(7,1){$\{x_2: x_0+1\}$}

\put(14,2){B=$\left[\begin{array}{ccc}
  0 & -1 & 2\\
  1 & 0  & -3\\
  -1 & 0 & 0\end{array}\right]$}
\end{picture}
\end{center}
\caption{Example of a Double Quiver}\label{DQ1}
\end{figure}

\begin{df}
We define {\em mutation} at a mutable vertex $k$ of a double quiver $Q$ with vertices $\{0, 1, \ldots ,n-1\}$ to be the application of the map $\tau_k$ that takes $Q$ to a new double quiver $\tau_k(Q)$ via the following steps:
\begin{enumerate}
\item Add a half-edge $i \rightarrow j$ attached at $i$, for each pair of half-edges $i\rightarrow k$ attached at $i$ and $k\rightarrow j$ attached at $k$. Also add a half-edge $j \rightarrow i$ attached at $i$, for each pair of half-edges $j \rightarrow k$ attached at $k$ and $k\rightarrow i$ attached at $i$.
\item Reverse the direction of half-edges between vertex $k$ and node $i$, for all $i\neq k$.
\item Successively pick $2$-cycles and remove both half-edges until no $2$-cycles remain.
\end{enumerate}

The mutation of a double quiver corresponds to the mutation of the corresponding LP algebra seed. Let $\bf (x, P)$ be the LP algebra seed associated to the double quiver $Q$ and $\bf (x', P')$ the LP algebra seed associated to $Q'$, the double quiver resulting from mutating $Q$ at $k$. Then the intermediate polynomials $P_j'$ are the intermediate polynomials of the seed $\mu_k(\mathbf{x, P})$, where $\mu_k$ is seed mutation as defined in Section 2. We will be able to find all period 1 binomials $P$ that satisfy some mild conditions regarding their corresponding double quiver $Q$. Observe that a period 1 seed whose exchange polynomials are all binomials has a corresponding period 1 double quiver $Q$. However, it will be easier to work with period 1 B-matrices; next, we give the corresponding definition of mutation for B-matrices.

Denote by $\mathbf{1}_S$ the indicator variable of $S$.
Mutation at vertex $k$ corresponds to a {\it mutation of the B-matrix} of the double quiver that maps it to $\tau_k(B) = \tilde{B} = (\tilde{b}_{i,j})_{n\times n}$, such that
\begin{align}\label{bmatrixmutation}
\tilde{b}_{i,j}= \left\{ \begin{array}{lc}
-b_{i,j}&i=k \text{ or } j=k\\
b_{i,j}+b_{i,k} \cdot |b_{k,j}| \cdot \textbf{1}_{\{b_{k,i}b_{k,j}<0\}}& \text{otherwise}\end{array} \right.
\end{align}
\end{df}
\begin{exam}
If we mutate the double quiver in Figure (\ref{DQ1}) at $x_0$, we obtain the following double quiver

\setlength{\unitlength}{0.5cm}
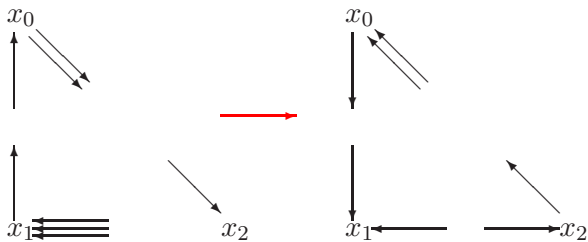
\begin{figure}[H]
\begin{center}
\begin{picture}(10,6)
\linethickness{0.5pt}

\put(-0.7,-0.2){$x_1$}
\put(-0.7,5.5){$x_0$}
\put(5,-0.2){$x_2$}

\put(2,0){\vector(-1,0){2}}
\put(2,0.2){\vector(-1,0){2}}
\put(2,-0.2){\vector(-1,0){2}}
\multiput(-0.5,0.2)(0,3){2}{\vector(0,1){2}}
\put(0.1,5.3){\vector(1,-1){1.4}}
\put(3.6,1.8){\vector(1,-1){1.4}}
\put(-0.1,5.1){\vector(1,-1){1.4}}

{\color{red}
\put(5,3){\vector(2,0){2}}}

\put(8.3,-0.2){$x_1$}
\put(8.3,5.5){$x_0$}
\put(14,-0.2){$x_2$}

\put(11,0){\vector(-1,0){2}}
\put(12,0){\vector(1,0){2}}
\multiput(8.5,2.2)(0,3){2}{\vector(0,-1){2}}
\put(10.5,3.9){\vector(-1,1){1.4}}
\put(14,0.4){\vector(-1,1){1.4}}
\put(10.3,3.7){\vector(-1,1){1.4}}

\end{picture}
\end{center}
\caption{Double Quiver Mutation at $x_0$}\label{DQ2}
\end{figure}

\end{exam}

\begin{rem}\label{quivervsdouble}
Double quivers are generalizations of (normal) quivers in the following sense:
\begin{enumerate}
\item A quiver Q can be regarded as an example of a double quiver. Split each edge $i \rightarrow j$ into two half-edges. Then attach one of them to $i$ and the other to $j$. The mutation rules for double quivers and for quivers agree with each other.
\item The cluster algebra $\mathcal{A}$ defined by any skew-symmetrizable matrix $B$ can be realized as a double quiver. In fact, $B$ is associated to a double quiver $Q$ and to a seed $t$ that gives rise to a LP algebra $\mathcal{A}(t)$ that is identical to $\mathcal{A}$ and the mutation rules agree. Furthermore, if $v$ is a vertex in the double quiver $\tilde{Q}$, that is the result of mutating $Q$ at $v$, then $v$ is mutable in $\tilde{Q}$.
\item Fomin and Zelevinsky defined cluster algebras in their foundamental paper \cite{fz2} by sign-skew-symmetric matrices. In this definition, it was required that any sequence of mutations yields another sign-skew-symmetric matrix. Our double quivers can be regarded as a direct generalization of cluster algebras defined by sign-skew-symmetric matrices. For one thing, we do not require the matrix $B$ to be sign-skew-symmetric. For another, we have fewer restrictions on the mutation rules; we define mutability at a vertex, so that double quivers where some mutation sequences are invalid but others are not can still be considered.  
\end{enumerate}
\end{rem}

\subsection{1 Periodicity}
In this section, we examine more precisely the notion of a period 1 double quiver. We also prove a weaker version of Theorem \ref{binomialthm}.

Let $Q$ be a double quiver and $B$ be the matrix (not necessarily skew-symmetric) determined by $Q$. We say that {\it $Q$ has period 1} if mutating at $0$ and relabeling the vertices $(0,1,2,\ldots n-1) \rightarrow (n-1,0,1,\ldots n-2)$ gives back the original double quiver $Q$. In particular, if $Q$ has period 1, then its vertex $0$ is mutable, meaning in terms of B-matrices that $b_{k,0}\neq 0 \Longrightarrow b_{0,k}\neq 0$. Mutating at vertex $0$ yields the nwe B-matrix $\tilde{B}$ given by:
\begin{align*}
\tilde{b}_{i,j}= \left\{ \begin{array}{lc}
-b_{i,j}&i=0 \text{ or } j=0\\
b_{i,j}+b_{i,0} \cdot |b_{0,j}| \cdot \textbf{1}_{\{b_{0,i}b_{0,j}<0\}}& \text{otherwise}\end{array} \right.
\end{align*}

The B-matrix of the mutated quiver $\tau(Q)$ is
\begin{align*}
\tau(B)=\left( \begin{array}{ccccc}
0 & -b_{0,1} & -b_{0,2} & \ldots  &-b_{0,n-1}\\
-b_{1,0} & 0 & b_{1,2}+\epsilon_{1,2} & \ldots &b_{1,n-1}+\epsilon_{1,n-1}\\
-b_{2,0} & b_{2,1}+\epsilon_{2,1} & 0 & \ldots &b_{2,n-1}+\epsilon_{2,n-1}\\
\vdots   &\vdots & \vdots & \ddots &\vdots\\
-b_{n-1,0} & b_{n-1,1}+\epsilon_{n-1,1} & b_{n-1,2}+\epsilon_{n-1,2} & \ldots &0\\
\end{array} \right)
\end{align*}
where $\epsilon_{i,j}=b_{i,j}+b_{i,0} \cdot |b_{0,j}| \cdot \textbf{1}_{\{b_{0,i}b_{0,j}<0\}}$. The double quiver $Q$ has period 1 if $\tau(B)$ and $\mu B \mu^{-1}$ represent the same binomial seed, where $\mu$ is the permutation matrix such that $\mu B \mu^{-1}$ corresponds to the seed after the relabeling $(0,1,2,\ldots n-1) \rightarrow (n-1,0,1,\ldots n-2)$,
\begin{align*}
\mu B \mu^{-1}=\left( \begin{array}{ccccc}
0 & b_{n-1,0} & b_{n-1,1} & \ldots  &b_{n-1,n-2}\\
b_{0,n-1} & 0 & b_{0,1} & \ldots &b_{0,n-2}\\
b_{1,n-1} & b_{2,1}  & 0 & \ldots &b_{2,n-2}\\
\vdots   &\vdots & \vdots & \ddots &\vdots\\
b_{n-2,n-1} & b_{n-2,0} & b_{n-2,1} & \ldots &0\\
\end{array} \right)
\end{align*}
Therefore $Q$ is a period 1 double quiver if
\begin{align}
\label{cri}\tau(B)=\mu B\mu^{-1}.
\end{align}
Equivalently, $Q$ is a period 1 double quiver if
\begin{align}
\label{ex-3}b_{n-1,i}&=-b_{0,i+1}, &0\leq i\leq n-2,&\\
\label{ex-2}b_{i,n-1}&=-b_{i+1,0}, &0\leq i\leq n-2,& \text{ and}\\
\label{ex-1}b_{i,j}&=b_{i+1,j+1}+\epsilon_{i+1,j+1}, &0\leq i,j\leq n-2.&
\end{align}
Solving these equations leads to the following equations
\begin{eqnarray}
\notag -b_{i+1,0}=b_{i,n-1} &=& b_{i-1,n-2}-\epsilon_{i,n-1}\\
\notag &=& b_{i-2,n-3}-\epsilon_{i-1,n-2}-\epsilon_{i,n-1}\\
\notag &\vdots&\\
\label{ex1} &=& b_{0,n-i-1}-\epsilon_{1,n-i}-\epsilon_{2,n-i+1}-\ldots-\epsilon_{i,n-1}.\\
\notag -b_{0,i+1}=b_{n-1,i}&=& b_{n-2,i-1}-\epsilon_{n-1,i}\\
\notag  &=& b_{n-3,i-2}-\epsilon_{n-2,i-1}-\epsilon_{n-1,i}\\
\notag  &\vdots&\\
\label{ex2} &=& b_{n-i-1,0}-\epsilon_{n-i,1}-\epsilon_{n-i+1,2}-\ldots-\epsilon_{n-1,i}.
\end{eqnarray}

Using the same terminology as \cite{fm} for quivers, vertex $i$ of the double quiver $Q$ is said to be a {\it sink} if all the half-edges incident to $i$ are directed inwards. A double quiver is said to be a period 1 {\em sink-type} double quiver if vertex $0$ is sink, and the double quiver has period 1. From above, we can obtain the following theorem classifying all period 1 sink-type double quivers:

\begin{thm}\label{sinktypethm}
Let $B$ be the matrix of a sink-type double quiver $Q$. Then $Q$ is a period 1 double quiver if and only if the following conditions hold:
\begin{enumerate}
\item $b_{0,i}$ and $b_{0,n-i}$ are either both negative, or both zero, for $i=1,2\ldots n-1$.
\item $b_{i,0}=-b_{0,n-i}$, for $i=1,2,\ldots n-1$.
\item $b_{i,j}=b_{0,j-i}$ if $0<i<j\leq n-1$ and $b_{i,j}=-b_{0,n-i+j}$ if $0<j<i\leq n-1$.
\end{enumerate}
\begin{proof}
Since $Q$ is of sink type, then $b_{0,i}\leq 0$ for all $i$. Therefore $\epsilon_{i,j}=0$ for all $0<i,j\leq n-1$.

If all three conditions above are satisfied, then (\ref{ex-3}), (\ref{ex-2}) and (\ref{ex-1}) are trivially satisfied.

Conversely, let us assume $Q$ has period 1, so (\ref{ex-3}), (\ref{ex-2}) and (\ref{ex-1}) are satisfied. From (\ref{ex-1}), we have
\begin{align*}
b_{i,j}=b_{0,j-i}, \text{ if } 0 < i < j \leq n-1,\\
b_{i,j}=b_{i-j,0}, \text{ if } 0 < j < i \leq n-1.
\end{align*}
Combining with (\ref{ex-3}) and (\ref{ex-2}), we have,
\begin{align*}
b_{i,n-1}=b_{0,n-i-1}=-b_{i+1,0},\\
b_{n-1,i}=-b_{0,i+1}=b_{n-i-1,0}.
\end{align*}
It follows that $b_{0,i+1}=0 \Longrightarrow b_{n-i-1,0}=0$. Since $0$ is a mutable vertex, we have $b_{0,n-i-1}=0$ for all $i$.
\end{proof}
\end{thm}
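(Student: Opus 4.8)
The plan is to reduce the period-$1$ criterion (\ref{cri})---equivalently the system (\ref{ex-3})--(\ref{ex-1})---to a set of purely \emph{linear} constraints by exploiting the sink-type hypothesis. Since vertex $0$ is a sink, $b_{0,i}\leq 0$ for every $i$, so every product $b_{0,i}b_{0,j}$ is nonnegative and each indicator $\textbf{1}_{\{b_{0,i}b_{0,j}<0\}}$ vanishes. Consequently $\epsilon_{i,j}=0$ for all $0<i,j\leq n-1$, and the otherwise quadratic matching condition (\ref{ex-1}) collapses to the diagonal-shift relation $b_{i,j}=b_{i+1,j+1}$ for $0\leq i,j\leq n-2$. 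This is the crucial simplification; for the forward direction one then checks by direct substitution, using the explicit formulas for $b_{i,j}$ in conditions (2) and (3) (and splitting off the boundary cases $i=0$ and $j=0$), that (\ref{ex-3}), (\ref{ex-2}) and (\ref{ex-1}) all hold.

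For the converse, assume $Q$ has period $1$. I would solve the recurrence $b_{i,j}=b_{i+1,j+1}$ by iterating until one index reaches $0$, obtaining $b_{i,j}=b_{0,j-i}$ when $0<i<j\leq n-1$ and $b_{i,j}=b_{i-j,0}$ when $0<j<i\leq n-1$. The first equality is already the first half of condition (3). To obtain condition (2) and to link the two halves, I would feed these solved forms into the boundary equations: writing $b_{i,n-1}=b_{0,n-i-1}$ and combining with (\ref{ex-2}), namely $b_{i,n-1}=-b_{i+1,0}$, gives $b_{i+1,0}=-b_{0,n-i-1}$, i.e. $b_{j,0}=-b_{0,n-j}$, which is condition (2). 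Substituting this back into $b_{i,j}=b_{i-j,0}$ then yields $b_{i,j}=-b_{0,n-i+j}$, completing condition (3).

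The subtlest step, and the main obstacle, is the symmetry condition (1): that $b_{0,i}$ and $b_{0,n-i}$ are simultaneously zero or simultaneously negative. Sink-type already forces both to be $\leq 0$, so only the ``both zero'' equivalence is at stake. One implication follows from the linear relations alone: combining (\ref{ex-3}), $b_{n-1,i}=-b_{0,i+1}$, with the solved form $b_{n-1,i}=b_{n-i-1,0}$ gives $b_{0,i+1}=0\Rightarrow b_{n-i-1,0}=0$. The reverse implication does \emph{not} follow from these equations and must be supplied by the fact that vertex $0$ is mutable, which is automatically part of the period-$1$ hypothesis: mutability reads $b_{k,0}\neq 0\Rightarrow b_{0,k}\neq 0$, whose contrapositive, together with condition (2) in the form $b_{i,0}=-b_{0,n-i}$, forces $b_{0,i}=0\Leftrightarrow b_{0,n-i}=0$. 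I would take care to record that no assumption beyond period-$1$ and sink-type is invoked here, and that the $\leq 0$ bound then upgrades this equivalence to the full condition (1).
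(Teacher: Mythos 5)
Your proposal is correct and follows essentially the same route as the paper's proof: the sink-type hypothesis kills all the $\epsilon_{i,j}$, reducing (\ref{ex-1}) to the diagonal-shift relation, which you solve and then combine with the boundary equations (\ref{ex-3}) and (\ref{ex-2}), invoking mutability of vertex $0$ exactly where the paper does to close the loop on condition (1). If anything, your write-up of the last step (linking $b_{0,k}=0$ to $b_{0,n-k}=0$ via $b_{n-k,0}$ and the mutability contrapositive) is more explicit than the paper's rather terse final sentence.
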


The seed $t = \bf (x, p)$ corresponding to a sink-type double quiver $Q$ is such that $p_0$ is of the form $\prod_{j}{x_j^{a_j}} + 1$. If $Q$ has period 1, then $a_i = 0 \Longleftrightarrow a_{n-i} = 0$ follows from (1) in the theorem above. Conversely, any polynomial of this form generates a period 1 seed as item (2) of Theorem \ref{families} shows. Thus Theorem \ref{sinktypethm} can be restated as:

\begin{thm}\label{actualsinktypethm}
The only period 1 binomials such that the quiver corresponding to $\bf (x, P)$ is of sink-type and has period 1 are those of the form $\displaystyle P = x_1^{a_1}x_2^{a_2}\cdots x_{n-1}^{a_{n-1}} + 1$, where $a_i = 0 \Longleftrightarrow a_{n-i} = 0$, for all $i$.
\end{thm}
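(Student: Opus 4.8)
The plan is to deduce Theorem \ref{actualsinktypethm} as a dictionary-translation of Theorem \ref{sinktypethm}, which has already been proved, rather than re-running any mutation computation. First I would set up the correspondence between a sink-type double quiver $Q$ and its associated binomial seed $t = \mathbf{(x, P)}$. By the definition of the seed attached to a B-matrix, the exchange polynomial at vertex $0$ is $P_0 = \prod_{j : b_{0,j} > 0} x_j^{b_{0,j}} + \prod_{j : b_{0,j} < 0} x_j^{-b_{0,j}}$. The sink-type hypothesis says every half-edge incident to $0$ points inward, i.e. $b_{0,j} \leq 0$ for all $j$, so the first product is empty and $P_0 = 1 + \prod_{j} x_j^{-b_{0,j}}$. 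Writing $a_j := -b_{0,j} \geq 0$, this is exactly $P_0 = x_1^{a_1} \cdots x_{n-1}^{a_{n-1}} + 1$ (note $b_{0,0} = 0$ so $x_0$ does not appear, consistent with (LP2)). This establishes that sink-type double quivers correspond precisely to binomials of the stated monomial-plus-one shape.

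Next I would translate the period-1 condition on $Q$ into the exponent condition on $P$. The forward direction is the substantive one: if $Q$ additionally has period 1, then condition (1) of Theorem \ref{sinktypethm} states that $b_{0,i}$ and $b_{0,n-i}$ are either both negative or both zero, for each $i = 1, \ldots, n-1$. In terms of the exponents $a_i = -b_{0,i}$, this says $a_i = 0 \iff a_{n-i} = 0$, which is exactly the desired symmetry constraint. Conversely, I would invoke item (2) of Theorem \ref{families} (the sink-type binomial family), which asserts that every polynomial of the form $x_1^{a_1} \cdots x_{n-1}^{a_{n-1}} + 1$ with $a_i = 0 \iff a_{n-i} = 0$ is $1$-periodic, hence generates a period 1 seed; the double quiver recording this seed is sink-type (again because $P$ has no term other than the leading monomial and the constant $1$) and has period 1. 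Thus the two families coincide.

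The bookkeeping I would want to be careful about is the matching of \emph{seeds} with \emph{double quivers and their period-1 notions}: Theorem \ref{sinktypethm} is phrased about period 1 of the double quiver $Q$ (defined via the B-matrix relabeling \eqref{cri}), while Theorem \ref{actualsinktypethm} is phrased about period 1 of the seed $\mathbf{(x,P)}$. The passage between them relies on the remark, established when double quiver mutation was defined, that mutation of a double quiver corresponds to mutation of the associated LP seed; so period 1 of $Q$ is equivalent to period 1 of $t$ (with the same cyclic relabeling), provided all relevant vertices are mutable. For sink-type $Q$ the only mutation performed is at vertex $0$, whose mutability for period 1 seeds was already noted, so this equivalence applies cleanly.

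The main obstacle, such as it is, is not a hard estimate but ensuring the restatement is genuinely an \emph{iff} at the level of seeds: one must check both that every period 1 sink-type binomial seed arises from a period 1 sink-type double quiver (so that Theorem \ref{sinktypethm}'s classification applies and forces $a_i = 0 \iff a_{n-i} = 0$), and that the classification's output binomials really do generate period 1 seeds (supplied by Theorem \ref{families}(2) rather than by the double quiver combinatorics alone). Once those two inclusions are in place, the theorem follows immediately by substituting $a_i = -b_{0,i}$ into condition (1) of Theorem \ref{sinktypethm}.
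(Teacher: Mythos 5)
Your proposal is correct and follows essentially the same route as the paper: the paper likewise observes that a sink-type double quiver forces $p_0 = \prod_j x_j^{a_j} + 1$, derives the symmetry $a_i = 0 \Longleftrightarrow a_{n-i} = 0$ from condition (1) of Theorem \ref{sinktypethm}, and supplies the converse by citing item (2) of Theorem \ref{families}. Your extra care about matching period 1 of the double quiver with period 1 of the seed is a reasonable elaboration of what the paper leaves implicit, but it does not change the argument.
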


\subsection{Mutual Double Quiver}
In general, given a binomial seed (all the exchange polynomials are binomials), the corresponding double quiver is not unique. For example, if we reverse all the half-edges attached at a certain vertex, the new double quiver represents the same seed. However, there is a canonical choice, which coincides with usual quivers (if we regard a quiver as a double quiver, as it was done in (1) of Remark \ref{quivervsdouble}). From the mutability of vertex $0$, if there are half-edges between $0$ and $i$ attached to $i$, then there are half-edges between $0$ and $i$ attached to $0$. We make all these half-edges point in the same direction by reversing all the half-edges attached at $i$, if necessary. In terms of the B-matrix, the resulting canonical quiver is such that $b_{0,i}$ and $b_{i,0}$ are of opposite sign. A double quiver with this condition is said to be {\it mutual} at vertex $0$. Such double quiver is said to be the {\it canonical double quiver} associated to the seed. In this subsection, we prove Theorem \ref{mutualdoublequiver} regarding period 1 mutual (at $0$) double quivers. By translating this into the language of period 1 polynomials, this is equivalent to Theorem \ref{binomialthm} in Section \ref{results}.

\begin{thm}\label{mutualdoublequiver}
Let $B$ be the matrix associated to a canonical mutual double quiver $Q$. Then $Q$ has period 1 if and only if the following conditions hold:
\begin{enumerate}
\item $b_{i,0}$ and $b_{0,i}$ are of opposite signs, or both zero, for $0 < i\leq n-1$.
\item $b_{i,0}=-b_{0,n-i}$, for $i=1,2,\ldots n-1$.
\item $b_{i,j}=-\sum_{k=0}^{i}\epsilon_{i-k,j-k}+b_{0,j-i}$ if $0<i<j\leq n-1$.
\item $b_{i,j}=-\sum_{k=0}^{j}\epsilon_{i-k,j-k}-b_{0,n-i+j}$ if $0<j<i\leq n-1$.
\end{enumerate}
\end{thm}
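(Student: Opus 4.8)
The plan is to prove Theorem \ref{mutualdoublequiver} by unwinding the period 1 criterion \eqref{cri} into the explicit component equations \eqref{ex-3}, \eqref{ex-2}, \eqref{ex-1} that have already been set up, and then showing these are equivalent to the four stated conditions. The overall strategy mirrors the proof of Theorem \ref{sinktypethm}, but now we cannot assume the $\epsilon$-terms vanish, since vertex $0$ is no longer a sink; this is precisely where the difficulty lies. First I would establish the forward direction: assume $Q$ has period 1, so equations \eqref{ex-3}, \eqref{ex-2}, \eqref{ex-1} hold. Condition (1) is immediate from the mutuality hypothesis (the double quiver is canonical and mutual at $0$), and condition (2) is just a restatement of \eqref{ex-2} evaluated along the last column, namely $b_{i,n-1} = -b_{i+1,0}$ combined with the telescoping in \eqref{ex1}.

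The heart of the argument is deriving conditions (3) and (4) from the recursion \eqref{ex-1}, which reads $b_{i,j} = b_{i+1,j+1} + \epsilon_{i+1,j+1}$. The idea is to iterate this recursion along diagonals. Starting from an interior entry $b_{i,j}$ with $0 < i < j \leq n-1$, I would repeatedly apply \eqref{ex-1} to decrease both indices simultaneously, producing a telescoping sum
\[
b_{i,j} = b_{0, j-i} + \sum_{k} \epsilon_{\,\cdot\,,\,\cdot\,},
\]
where the accumulated $\epsilon$-terms run along the diagonal from $(i,j)$ down to the boundary entry $b_{0,j-i}$. Reindexing the sum so that it matches $\sum_{k=0}^{i}\epsilon_{i-k,j-k}$ and tracking the sign carefully yields condition (3). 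For condition (4), with $0 < j < i$, the same diagonal descent instead terminates at a first-column entry $b_{i-j,0}$, which I then convert to a $b_{0,\cdot}$ entry using condition (2); this introduces the $-b_{0,n-i+j}$ term and the sum $\sum_{k=0}^{j}\epsilon_{i-k,j-k}$.

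Conversely, for the backward direction I would assume conditions (1)--(4) and verify that \eqref{ex-3}, \eqref{ex-2}, \eqref{ex-1} all hold, which by \eqref{cri} establishes period 1. Equations \eqref{ex-2} and \eqref{ex-3} follow directly from (2) together with the boundary cases of (3) and (4); equation \eqref{ex-1} is recovered by subtracting consecutive diagonal telescoping sums from (3) and (4) and checking that exactly one $\epsilon$-term survives. The main obstacle I anticipate is the bookkeeping of the $\epsilon_{i,j}$ terms: because $\epsilon_{i,j} = b_{i,j} + b_{i,0}\cdot|b_{0,j}|\cdot\mathbf{1}_{\{b_{0,i}b_{0,j}<0\}}$ depends on the very entries $b_{i,j}$ we are solving for, the telescoping is genuinely implicit rather than a clean closed-form substitution. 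I would handle this by treating the four conditions as a consistent simultaneous system and verifying self-consistency along each diagonal, rather than attempting to solve for the entries in one pass; the indicator function must also be checked to be compatible with the sign pattern imposed by condition (1), so that the $\epsilon$-terms appearing in (3) and (4) are well-defined in terms of the boundary data $b_{0,\cdot}$ and $b_{\cdot,0}$.
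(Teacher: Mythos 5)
Your overall architecture matches the paper's: unwind \eqref{cri} into \eqref{ex-3}--\eqref{ex-1}, telescope \eqref{ex-1} along diagonals to get conditions (3) and (4), and convert first-column entries to first-row entries via condition (2). However, there is a genuine gap at the step you describe as the easiest one. You assert that condition (2) is ``just a restatement of \eqref{ex-2} evaluated along the last column \ldots combined with the telescoping in \eqref{ex1}.'' It is not: the telescoped identity \eqref{ex1} reads
\begin{equation*}
-b_{i+1,0} \;=\; b_{0,n-i-1}-\epsilon_{1,n-i}-\epsilon_{2,n-i+1}-\cdots-\epsilon_{i,n-1},
\end{equation*}
so to conclude $b_{i+1,0}=-b_{0,n-i-1}$ you must prove that the accumulated $\epsilon$-sum vanishes. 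This is the actual content of the theorem and the only place where the mutuality hypothesis does real work. The paper proves it by induction on $i$: it symmetrizes the sum by averaging it with its index-reversal (pairing the $j$-th term with the $(k-j+1)$-th), then uses the inductive hypothesis $b_{j,0}=-b_{0,n-j}$ together with mutuality to show that the two indicator functions in each pair agree, and that when both equal $1$ the relevant entries $b_{0,n-j}$ and $b_{0,n-k+j-1}$ have opposite signs, so each pair cancels. Your proposal never supplies this cancellation mechanism; your closing suggestion to ``treat the four conditions as a consistent simultaneous system and verify self-consistency along each diagonal'' names the difficulty but does not resolve it, since without condition (2) in hand the boundary terms $b_{0,j-i}$ and $-b_{0,n-i+j}$ appearing in (3) and (4) cannot be produced. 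Once condition (2) is established, the rest of your outline (diagonal descent for (3), descent to the first column plus condition (2) for (4), and the routine verification of the converse) does go through as in the paper.
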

\begin{proof}
We assume $Q$ is a canonical mutual double quiver. If all four conditions above are satisfied, then (\ref{ex-3}), (\ref{ex-2}) and (\ref{ex-1}) are trivially satisfied.

Conversely, let us assume $Q$ has period 1, so (\ref{ex-3}), (\ref{ex-2}) and (\ref{ex-1}) are satisfied. We first prove by induction
\begin{align}
\label{ex3}b_{0,i}=-b_{n-i,0} \text{ and } b_{i,0}=-b_{0,n-i}.
\end{align}
Setting $i=0$ in (\ref{ex1}) and (\ref{ex2}) gives the base cases $b_{0,1}=-b_{n-1,0}$ and $b_{1,0}=-b_{0,n-1}$. Now, assume that (\ref{ex3}) holds for $i=0,1,2\ldots k$;  we prove it for $i=k+1$. Note that,
\begin{eqnarray}
\notag-b_{k+1,0} &=& b_{0,n-k-1}-\sum_{j=1}^{k}\epsilon_{j,n-k+j-1}\\
\notag &=& b_{0,n-k-1}-\sum_{j=1}^{k}b_{j,0} \cdot |b_{0,n-k+j-1}| \cdot \textbf{1}_{\{b_{0,j}b_{0,n-k+j-1}<0\}}\\
\label{ex3.5} &=& b_{0,n-k-1}-\frac{1}{2}\cdot\big(\sum_{j=1}^{k}b_{j,0} \cdot |b_{0,n-k+j-1}| \cdot \textbf{1}_{\{b_{0,j}b_{0,n-k+j-1}<0\}}\\
\notag &+& \sum_{j=1}^{k}b_{k-j+1,0} \cdot |b_{0,n-j}| \cdot \textbf{1}_{\{b_{0,k-j+1}b_{0,n-j}<0\}}\big)
\end{eqnarray}
From the inductive hypothesis, $b_{j,0}=b_{0,n-j}$ and $b_{k-j+1,0}=b_{0,n-k+j-1}$, so,
\begin{eqnarray}
\notag b_{j,0} \cdot |b_{0,n-k+j-1}| \cdot \textbf{1}_{\{b_{0,j}b_{0,n-k+j-1}<0\}}
+b_{k-j+1,0} \cdot |b_{0,n-j}| \cdot \textbf{1}_{\{b_{0,k-j+1}b_{0,n-j}<0\}}\\
\label{ex4}= b_{0,n-j} \cdot |b_{0,n-k+j-1}| \cdot \textbf{1}_{\{b_{0,j}b_{k-j+1,0}<0\}}
+b_{0,n-k+j-1} \cdot |b_{0,n-j}| \cdot \textbf{1}_{\{b_{0,k-j+1}b_{j,0}<0\}}
\end{eqnarray}
From the mutuality assumption, $\textbf{1}_{\{b_{0,k-j+1}b_{j,0}<0\}}=\textbf{1}_{\{b_{0,j}b_{k-j+1,0}<0\}}$. If both of them are $0$, then (\ref{ex4}) is zero. If both of them are $1$, then  $b_{0,n-k+j-1}$ and $b_{0,n-j}$ are of opposite sign, therefore (\ref{ex4}) is zero. Substituting back into (\ref{ex3.5}), we have $-b_{k+1,0}=b_{0,n-k-1}$. From (\ref{ex2}), by a similar argument, we obtain $-b_{0,k+1} = b_{n-k-1,0}$.

Finally, conditions (3) and (4) follow from equations (\ref{ex3}) and (\ref{ex-3}).
\end{proof}

\begin{rem}
Theorem \ref{binomialthm} generalizes the main theorem of \cite{fm}.
For example, the polynomial $P = x_1^ax_2^b + 1$, with $a\neq b$, produces Laurent phenomenon sequences; it is predicted by Theorem \ref{binomialthm}, but not by \cite[Theorem 6.6]{fm}.
\end{rem}

\begin{rem}
A double quiver with a skew-symmetrizable matrix B-matrix is mutual at each vertex. Hence, restricting $B$ to be skew-symmetric, Theorem \ref{mutualdoublequiver} provides a classification of period 1 cluster algebras over the coefficient ring $\Z$.
\end{rem}

\section{Classification of period 1 Seeds for Small $n$}\label{smallnsec}

In this section we prove the classifications stated in Section 3 of all period 1 seeds when $n=2$ and $n=3$.

\subsection{Proof of Theorem \ref{n2thm}}
Let $P = P(x_1) = \sum_{i = 0}^d{a_ix_1^i}$ be an (irreducible) polynomial of degree $d>0$ that generates a period 1 seed. Since $P$ is not divisible by $x_1$, we know $a_0 \neq 0$ and $P|_{x_1 = 0} = a_0 \neq 0$. From the definition of $\tau$, $P$ generates a period 1 seed if and only if
\begin{align}\label{ntwo} P(x_1) = a_0^{-1}x_1^d\cdot\sum_{i = 0}^d{a_i\big( \frac{a_0}{x_1} \big)^i} = \sum_{i = 0}^d{(a_ia_0^{i-1})x_1^{d - i}}.\end{align}

For each $0 \leq i \leq d$, by equating the coefficient of $x_1^i$ on both sides of (\ref{ntwo}), we see that
\begin{align} \label{equate} a_i = a_{d-i}a_0^{d-i-1} \textrm{ for all $i$}. \end{align}
In particular, when $i = d$, we obtain $a_d = 1$, so $P$ has to be monic.

If $d > 2$, we have that $a_i = a_{d-i}a_0^{d-i-1} = (a_ia_0^{i-1})a_0^{d-i-1} = a_ia_0^{d-2}$ for all $i$. Setting $i=d$ gives that $a_0 = \pm 1$. If $a_0 = 1$ then (\ref{equate}) implies $a_i = a_{d - i}$ for all $i$, or equivalently, $P$ is palindromic, and all such polynomials satisfy (\ref{ntwo}). If $a_0 = -1$, then (\ref{equate}) implies $a_i = (-1)^{d-i-1}a_{d - i}$ for all $i$. When $d$ is odd, these relations when $i=i_0, d - i_0$ imply $a_{i_0} = 0$ for all $1 \leq i_0 \leq d-1$. But then $P = x_1^d - 1$ is not irreducible. When $d$ is even, we find that $a_i = - a_{d-i}$ for all $i$, or equivalently, $P$ is antipalindromic.

If $d = 2$, (\ref{equate}) is trivially satisfied for any $a_1, a_2$.

If $d = 1$, (\ref{equate}) with $i = 0$ gives $a_0 = a_1 = 1$.

\subsection{Proof of Theorem \ref{n3thm}}
\subsubsection{Bounding Degrees}
Assume the following is a period 1 seed
\begin{align*}
\big{\{}x_0, P(x_1,x_2)\big{\}},\\
\big{\{}x_1, Q(x_0,x_2)\big{\}},\\
\big{\{}x_2, P(x_0,x_1)\big{\}},
\end{align*}
where $P$ is a two-variable irreducible polynomial not divisible by $x_1$ or $x_2$. By Proposition \ref{irreducible}, $Q(x_0,x_2)$ is also irreducible and not divisible by $x_0$ or $x_2$. Then $P(x_0,0)$ and $P(0,x_1)$ are not $0$. It is not hard to see that there do not exist period 1 polynomials $P$ that do not depend on $x_0$ or $x_1$. Thus assume that $P$ depends on both of these variables.
Let $m$ be the degree of $x_0$ in $P(x_0,x_1)$; we can write
\begin{align}
\label{DP}P(x_0,x_1)=\sum_{k=0}^{m}f_{k}(x_1)x_0^k,
\end{align}
where the $f_k$ are single variable polynomials for $k=0,1,\ldots m$. Let $\tilde{Q}$ be the intermediate polynomial $G$ at step (2) of applying $\tau$ to $Q$. Then
\begin{align*}
\tilde{Q}(x_0,x_2)=P\left(\frac{P(x_0,0)}{x_2},x_0\right)=\sum_{k=0}^{m}f_k(x_0)\cdot\frac{P^{k}(x_0,0)}{x_2^k}.
\end{align*}
Let $d(x_0)$ be the maximal factor of $\tilde{Q}(x_0,x_2)$, which is in the form $x_0^{s_1}\cdot p(x_0)$, where $f(x_0)$ is a factor of $P(x_0,0)^K$ for some $K$. From the rules for computing $\tau$, we have
\begin{align}
\label{Q}Q(x_0,x_2)=\sum_{k=0}^{m}\frac{f_k(x_0)P^{k}(x_0,0)}{d(x_0)}x_2^{m-k}.
\end{align}
In view of (\ref{Q}), the coefficient of $x_2^m$ in $Q$ is \[ \frac{f_0(x_0)}{d(x_0)} = \frac{P(0, x_0)}{d(x_0)}, \] which is a non-vanishing polynomial. Therefore $d(x_0)$ divides $f_0(x_0)=P(x_0,0)$, and $x_2$ is of degree $m$ in $Q(x_0,x_2)$.

We can similarly obtain $P(x_1, x_2)$ from $Q(x_0, x_2)$. Let $n$ be the degree of $x_0$ in $Q(x_0,x_2)$; we can write
\begin{align}
\label{P}P(x_1,x_2)=\sum_{k=0}^{n}\frac{g_k(x_1)P^{k}(0,x_1)}{t(x_1)}x_2^{n-k}.
\end{align}
The coefficient of $x_2^n$ in $P$ is \[ \frac{g_0(x_1)}{t(x_1)} = \frac{P(x_1, 0)}{t(x_1)}, \] which is a nonzero polynomial. Therefore $t(x_1)$ divides $g_0(x_1)=Q(0,x_1)$ and the degree of $x_2$ in $P(x_1,x_2)$ is $n$.

If we let $x_2=0$ in (\ref{Q}), then
\begin{align}
\label{LQ}Q(x_0,0)=\frac{f_m(x_0)P^{m}(x_0,0)}{d(x_0)}
\end{align}
and similarly
\begin{align}
\label{LP}P(x_0,0)=\frac{g_n(x_0)P^{n}(0,x_0)}{t(x_0)}.
\end{align}
Comparing the degree of both sides of (\ref{LQ}) and (\ref{LP}) and recalling the divisibility relations $d(x_0) \mid P(0,x_0)$, $t(x_0) \mid Q(0,x_0)$, we arrive at the inequalities
\begin{align}
\label{bound1}\deg f_m(x_0)+m\deg P(x_0,0)=&\deg Q(x_0,0)+\deg d(x_0)\leq \deg Q(x_0,0)+\deg P(0,x_0)\\
\label{bound2}\deg g_n(x_0)+n\deg P(0,x_0)=&\deg P(x_0,0)+\deg t(x_0)\leq \deg P(x_0,0)+\deg Q(0,x_0)
\end{align}
Summing (\ref{bound1}) and (\ref{bound2}), and noticing that $\deg Q(0,x_0)\leq m$, $\deg Q(x_0,0)\leq n$, we obtain the following inequality:
\begin{align}\label{bound}
2\geq \deg g_n(x_0)+\deg f_m(x_0)+(m-1)\big{(}\deg P(x_0,0)-1\big{)}+(n-1)\big{(}\deg P(0,x_0)-1\big{)}.
\end{align}

From this inequality, the classification of period 1 polynomials is decomposed into the following five cases:
\begin{enumerate}

\item $\deg P(x_0,0)=2$ and $\deg P(0,x_0)=1$
\item $\deg P(x_0,0)=1$ and $\deg P(0,x_0)=2$
\item $\deg P(x_0,0)=\deg P(0,x_0)=1$
\item $\deg P(x_0,0)=\deg P(0,x_0)=2$
\item Either $\deg P(x_0,0)=0$ or $\deg P(0,x_0)=0$
\end{enumerate}

\begin{lem}\label{lemma1}
If $P(0,0) \neq 0$, then the bound (\ref{bound2}) can be refined to
\begin{align}
\label{bound3}\deg g_n(x_0)+n\deg P(0,x_0)=&\deg P(x_0,0)+\deg t(x_0)\leq 2\deg P(x_0,0).
\end{align}
\end{lem}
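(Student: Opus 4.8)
The plan is to notice that the equality in \eqref{bound2}, namely $\deg g_n(x_0)+n\deg P(0,x_0)=\deg P(x_0,0)+\deg t(x_0)$, is an exact identity read off from \eqref{LP}; it is the \emph{upper} estimate that must be sharpened. The coarse bound \eqref{bound2} used only $\deg t(x_0)\le\deg Q(0,x_0)$, so the entire lemma reduces to proving the single inequality $\deg t(x_0)\le\deg P(x_0,0)$.

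To bound $\deg t(x_0)$ I would exploit the two defining features of $t$ (the shared factor $d$ in the application of $\tau$ that produces $P$ from $Q$). First, $t$ is a factor of a power of $P|_{x_1=0}=P(0,x_0)$, so every irreducible factor of $t$ divides $P(0,x_0)$; since $P(0,0)\neq 0$ we have $P(0,x_0)|_{x_0=0}=P(0,0)\neq 0$, hence $x_0\nmid P(0,x_0)$ and consequently $t(0)\neq 0$. Second, as recorded just after \eqref{P}, the coefficient of $x_2^{n}$ in $P(x_1,x_2)$ is $g_0/t=Q(0,x_0)/t(x_0)$ and is a genuine polynomial, so $t(x_0)\mid Q(0,x_0)$. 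The task is therefore to factor $Q(0,x_0)$ and argue that $t$, being coprime to $x_0$, captures only a bounded part of it.

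For this I would specialize \eqref{Q} at $x_0=0$:
\[ Q(0,x_0)=\frac{1}{d(0)}\sum_{k=0}^{m} f_k(0)\,P(0,0)^{k}\,x_0^{\,m-k}, \]
where $d(0)\neq 0$ for the same reason as above. Because the $f_k(0)$ are precisely the coefficients of $P(x_0,0)$, setting $m'=\deg P(x_0,0)$ the highest surviving term is $k=0$ and the lowest is $k=m'$, so $Q(0,x_0)=x_0^{\,m-m'}R(x_0)$ with $\deg R=m'$ and $R(0)=f_{m'}(0)P(0,0)^{m'}/d(0)\neq 0$. Since $t(0)\neq 0$, $t$ is coprime to $x_0^{\,m-m'}$; combined with $t\mid Q(0,x_0)$ this forces $t\mid R$, whence $\deg t(x_0)\le\deg R=m'=\deg P(x_0,0)$. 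Substituting into the identity gives \eqref{bound3}.

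The crux---and the only place the hypothesis enters---is the case $\deg P(x_0,0)<m$, where $Q(0,x_0)$ carries the spurious factor $x_0^{\,m-m'}$ and the crude bound $\deg t\le\deg Q(0,x_0)$ overshoots by exactly $m-m'$. The assumption $P(0,0)\neq 0$ guarantees $x_0\nmid P(0,x_0)$, so $t$ cannot absorb any of that extraneous power, and the count tightens as needed. The bookkeeping that remains---verifying $d(0)\neq 0$ and tracking the downshift relabelings sending $x_2$ to $x_0$---is routine.
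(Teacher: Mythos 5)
Your argument is, in outline, exactly the paper's: reduce the lemma to $\deg t(x_0)\le \deg P(x_0,0)$, use $t\mid Q(0,x_0)$, specialize (\ref{Q}) at $x_0=0$ to write $Q(0,x_0)=x_0^{\,m-\deg P(x_0,0)}R(x_0)$ with $\deg R=\deg P(x_0,0)$, and conclude from the coprimality of $t$ with $x_0$. The one step that is not fully justified as written is $t(0)\neq 0$. The $t(x_1)$ appearing in (\ref{P}) is the \emph{entire} denominator produced by $\tau$: it is the gcd with a power of $P(0,x_1)$ times whatever power of $x_1$ is divided out in the final normalization step (the Laurent monomial $M$ that makes the output not divisible by any variable). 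That monomial contribution need not divide any power of $P(0,x_1)$, so your assertion that ``every irreducible factor of $t$ divides $P(0,x_0)$'' is essentially the claim $x_0\nmid t$ that you are trying to establish, together with the gcd property. The paper closes this with a short argument you should add: if $x_1\mid t(x_1)$, then since each coefficient $g_k(x_1)P^k(0,x_1)/t(x_1)$ in (\ref{P}) is a polynomial and $x_1\nmid P(0,x_1)$ (this is where $P(0,0)\neq 0$ enters), one would get $x_1\mid g_k(x_1)$ for every $k$, hence $Q(x_0,x_2)=\sum_k g_k(x_2)x_0^k$ would be divisible by $x_2$, contradicting that $Q$ is irreducible and not divisible by any variable. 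With that patch, the rest of your write-up (including $d(0)\neq 0$, which follows correctly from $d\mid P(0,x_0)$ and $x_0\nmid P(0,x_0)$, and the identification of the extremal surviving terms $k=0$ and $k=\deg P(x_0,0)$ in $Q(0,x_0)$) coincides with the paper's proof.
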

\begin{proof}
Since $d(x_0)  \mid  P(0,x_0)$ and $x_0 \nmid P(0,x_0)$, we see that $x_0 \nmid d(x_0)$.

We moreover claim that $x_1 \nmid t(x_1)$. Assume otherwise that $x_1  \mid  t(x_1)$. From (\ref{P}), $t(x_1)$ divides $g_k(x_1)P^{k}(0,x_1)$ for all $k=0,1,2\ldots n$ and $x_1 \nmid P(0,x_1)$. Therefore $x_1  \mid  g_k(x_1)$ for all $k=0,1,\ldots n$. This implies that $x_1  \mid  Q(x_0,x_1)$, which contradicts the irreducibility of $Q(x_0,x_1)$.

Since $P(x_0,0)=\sum_{k=0}^{m}f_k(0)x_0^k$, then $f_k(0)=0$ if $k>\deg p(x_0,0)$. Hence
\begin{align*}
t(x_1) \mid Q(0,x_1)=&\sum_{k=0}^{m}\frac{f_k(0)P^{k}(0,0)}{d(0)}x_1^{m-k}\\
               =&\left(\sum_{k=0}^{\deg P(x_0,0)}\frac{f_k(0)P^{k}(0,0)}{d(0)}x_1^{\deg P(x_0,0)-k}\right)\cdot x_1^{m-\deg P(x_0,0)}.
\end{align*}
Therefore $\deg t(x_1)\leq \deg P(x_0,0)$ and (\ref{bound2}) leads to the desired inequality
\begin{align*}
\deg g_n(x_0)+n\deg P(0,x_0)=&\deg P(x_0,0)+\deg t(x_0)\leq 2\deg P(x_0,0).
\end{align*}
\end{proof}

\begin{lem}\label{lemma2}
If $f_m(x_1)$ has a nonzero constant term, then $m = \deg P(x_0,0)$.
\end{lem}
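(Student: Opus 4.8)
The plan is to unwind the definitions of $m$ and of $\deg P(x_0,0)$ and compare the leading coefficients in $x_0$ directly; this statement is essentially immediate once one isolates the top $x_0$-coefficient. First I would recall from (\ref{DP}) that, writing $P(x_0,x_1)=\sum_{k=0}^{m}f_k(x_1)x_0^k$, the integer $m$ is by definition the degree of $x_0$ in $P(x_0,x_1)$, so that $f_m(x_1)$ is a nonzero polynomial, namely the leading coefficient of $P$ regarded as a polynomial in $x_0$ with coefficients in $\Z[x_1]$.

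Next I would specialize $x_1 = 0$ in this expression to obtain
\[ P(x_0,0)=\sum_{k=0}^{m}f_k(0)\,x_0^k. \]
This is obtained from a polynomial of $x_0$-degree $m$ by substituting a value for $x_1$, and such a substitution can never raise the degree in the remaining variable, so $\deg P(x_0,0)\leq m$. Moreover the coefficient of $x_0^m$ in $P(x_0,0)$ is precisely $f_m(0)$, the constant term of $f_m(x_1)$. The hypothesis that $f_m(x_1)$ has a nonzero constant term is exactly the assertion $f_m(0)\neq 0$, so the top coefficient of $P(x_0,0)$ does not vanish and therefore $\deg P(x_0,0)=m$, as desired.

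I do not anticipate a genuine obstacle in this argument: it is a direct consequence of examining the leading $x_0$-coefficient, and it uses neither the irreducibility of $P$ and $Q$ nor the period $1$ hypothesis. The only point to state carefully is the elementary inequality $\deg P(x_0,0)\leq m$, so that the nonvanishing of $f_m(0)$ forces equality rather than merely a lower bound.
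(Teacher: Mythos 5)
Your proof is correct and follows essentially the same argument as the paper: both isolate the coefficient $f_m(0)$ of $x_0^m$ in $P(x_0,0)$, observe it is nonzero by hypothesis, and combine the resulting lower bound $\deg P(x_0,0)\geq m$ with the trivial upper bound $\deg P(x_0,0)\leq m$. No gaps; your explicit mention that specialization cannot raise the degree is the same observation the paper makes implicitly.
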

\begin{proof}
In view of (\ref{P}), $P(x_0,x_1)$ contains the term $f_m(x_1)x_0^m$. Since $f_m(x_1)$ contains a nonzero constant term, then $f_m(0)\neq 0$. Thus, $P(x_0,0)$ contains the term $f_m(0)x_0^m$. This implies $\deg P(x_0,0)\geq m$ and since $m$ is the degree of $x_0$ in $P(x_0,x_1)$, we have $\deg P(x_0,0)\leq m$. Therefore $m=\deg P(x_0,0)$.
\end{proof}
\subsubsection{Analysis of Cases}

\begin{enumerate}[{Case} 1]
\item $\deg P(x_0, 0) = 2$ and $\deg P(0, x_0) = 1$.

From (\ref{bound}), we either have $m=2$ or $m=3$.

If $m=3$, we obtain $\deg f_3(x_1)=0$ from (\ref{bound}). However, Lemma \ref{lemma2} says that $P(x_0,0) = 3$, thus implying $f_3(x_1) \neq 0$, a contradiction. 

If $m=2$, (\ref{bound1}) tells us that
$\deg f_2(x_0) + 4 = \ deg Q(x_0,0) + \deg d(x_0)\leq n+1$, thus $n\geq 3$. From (\ref{DP}), we have
\begin{align*}
P(x_0,x_1)=f_2(x_1)x_0^2+f_1(x_1)x_0+f_0(x_1).
\end{align*}
Since the degree of $x_1$ in $P(x_0,x_1)$ is $n\geq 3$, $\deg f_2(x_1)\leq 1$ and $\deg f_0(x_1)=2$, we must have $\deg f_1(x_1)=n$. In view of (\ref{P}),
\begin{align*}
Q(x_0,x_2)=&\frac{f_0(x_0)}{d(x_0)}x_2^2+\frac{f_1(x_0)P(x_0,0)}{d(x_0)}x_2+\frac{f_2(x_0)P^2(x_0,0)}{d(x_0)}.
\end{align*}
As the degree of $x_0$ in $Q(x_0, x_2)$ is $n$, we have \[ n \geq \deg_{x_0}\left(\frac{f_1(x_0)P(x_0,0)}{d(x_0)}\right. \] Then
\begin{align*}
n\geq \deg f_1(x_0)+\deg P(x_0,0)-\deg d(x_0)\geq n+2-1=n+1.
\end{align*}
Therefore there are no period 1 polynomials in this case.

\item $\deg P(x_0, 0) = 1$ and $\deg P(0, x_0) = 2$.

From (\ref{bound}), we either have $n=2$ or $n=3$.

If $n=3$, we obtain $\deg f_m(x_1)=0$ from (\ref{bound}). From Lemma (\ref{lemma2}), we have $m=\deg P(x_0,0)=1$. Hence in (\ref{DP}),
\begin{align*}
P(x_0,x_1)=f_1(x_1)x_0+f_0(x_1).
\end{align*}
However, $\deg f_1(x_1)=0$ and $\deg f_0(x_1)=2$, so the degree of $x_1$ in $P(x_0,x_1)$ is $2 \neq n$, a contradiction.

If $n=2$, inequalities (\ref{bound1}) and (\ref{bound2}) yield
\begin{align*}
\deg f_m(x_0)+m\leq 4,\\
\deg g_n(x_0)+3\leq m.
\end{align*}
Thus $m\geq 3>1=\deg P(x_0,0)$. From Lemma (\ref{lemma2}), we have $\deg f_m(x_1)\geq 1$. These inequalities yield $m=3$, $\deg f_3(x_1)=1$ and $\deg g_n(x_0) = 0$.
Moreover, from (\ref{bound2}), $\deg t(x_0)=3$. Since these values do not satisfy (\ref{bound3}), Lemma (\ref{lemma1}) tells us that $P(0,0) = 0$, i.e., $P(x_0, x_1)$ does not have a constant term.
Taking $m=3$ in (\ref{Q}) yields
\begin{align*}
Q(x_0,x_2)=\frac{f_0(x_0)}{d(x_0)}x_2^3+\frac{f_1(x_0)P(x_0,0)}{d(x_0)}x_2^2+\frac{f_2(x_0)P^2(x_0,0)}{d(x_0)}x_2^2+\frac{f_3(x_0)P^3(x_0,0)}{d(x_0)}.
\end{align*}
From (\ref{bound1}) and the values already found, we also have $\deg d(x_0) = 2$. Since $f_3(x_0)$ and $P(x_0,0)$ are both linear polynomials without constant terms, $f_3(x_0)P^3(x_0,0)=ax_0^4$ for some $a\in\Z$. Since $d$ has degree $2$ and divides $(x_0) \mid f_3(x_0)P^3(x_0,0)$, then $d(x_0)=bx_0^2$ for some $b\in\Z, b\neq 0$. From $d(x_0) \mid f_0(x_0)$ and $d(x_0) \mid f_1(x_0)P(x_0,0)$, we have $x_0 \mid f_0(x_0)$ and $x_0 \mid f_1(x_0)$. From equation (\ref{DP}), we have
\begin{align*}
P(x_0,0)=f_3(0)x_0^3+f_2(0)x_0^2+f_1(0)x_0+f_0(0)=f_3(0)x_0^3+f_2(0)x_0^2.
\end{align*}
Since in this case, $\deg P(x_0,0)=1$, then $P(x_0,0)=0$. This contradicts the fact that $P(x_0,x_1)$ is not divisible by $x_1$.

\item $\deg P(x_0, 0) = \deg P(0, x_0) = 1$.

If $P(x_0,x_1)$ contains no constant term, then we can write
\begin{align*}
P(x_1,x_2) = ax_1+bx_2+x_1x_2R(x_1,x_2),
\end{align*}
where $a, b\in\Z$ are both nonzero and $R(x_1,x_2)$ is a polynomial of degree $m-1$ in $x_1$ and degree $n-1$ in $x_2$. We next obtain $\tau_P(P(x_0, x_1))$ by replacing $x_0$ with $\frac{ax_1}{x_3}$, downshifting and then multiplying by a monomial $M$. This monomial has to be such that the resulting $Q$ is a Laurent polynomial, not divisible by any $x_i$ and its coefficients have greatest common divisor $1$. Assume $\tilde{M}$ is $M$, but with coefficient $1$ and let $\tilde{Q}$ be the resulting polynomial. Thus $\tilde{Q} = cQ$ for some constant $c$. We can write $\tilde{Q}$ as
\begin{align*}
\tilde{Q}(x_0,x_2)=a^2x_2^{m-1}+bx_2^m+ax_0R(\frac{ax_0}{x_2},x_0)x_2^{m-1}
\end{align*}
From this polynomial, we analogously obtain $\tilde{P}$ by omitting a constant factor for the {\it adjusting monomial}
\begin{align*}
\tilde{P}(x_1,x_2)=a^2x_2^{k+1}+bx_1x_2^{k+1}+abx_1R(\frac{ab}{x_2},\frac{bx_1}{x_2})x_2^k,
\end{align*}
where $k$ is the least integer for which $abx_1R(\frac{ab}{x_2},\frac{bx_1}{x_2})x_2^k$ is a polynomial. In view of the above equation, $k+1=\deg \tilde{P}(0,x_2)=P(0,x_2)=1$. Therefore $k=0$ and so $R(x_0,x_1)$ is a contant; write $R = R(x_0,x_1)$. Then
$\displaystyle\tilde{P}(x_1,x_2) = abRx_1+a^2x_2+bx_1x_2$
must be equal to $\lambda P(x_1,x_2)=\lambda ax_1+\lambda bx_2+\lambda Rx_1x_2$, where $\lambda$ is a nonzero integer. After equating coefficients, we obtain the polynomials in items (1) and (2) of Theorem \ref{n3thm} whose generated seeds are
\begin{eqnarray*}
\{x_0, ax_1+ax_2+x_1x_2\}, \{x_1, a+x_0+x_2\}, \{x_2, ax_0+ax_1+x_0x_1\}
\end{eqnarray*}
and
\begin{eqnarray*}
\{x_0, ax_1-ax_2+x_1x_2\}, \{x_1, a+x_0-x_2\}, \{x_2, ax_0-ax_1+x_0x_1\}.
\end{eqnarray*}

If $P(x_0,x_1)$ has a nonzero constant term, we can write
\begin{align*}
P(x_0,x_1)=c+ax_0+bx_1+x_0x_1R(x_0, x_1).
\end{align*}
From Lemma \ref{lemma1}, we have $n\leq 2$. Substituting into (\ref{bound1}),
\begin{align*}
\deg f_m(x_0)+m\leq \deg d(x_0) +2.
\end{align*}
If $a\neq b$, then $\gcd (P(x_0,0),P(0,x_0))=1$, so $\deg d(x_0)=0$. We can then write constant $d(x_0)$ as $d$. By the same argument as in Lemma \ref{lemma2}, we have $m=1$. Then (\ref{Q}) reads
\begin{align*}
Q(x_0,x_2)=\frac{f_1(x_0)P(x_0,0)}{d}+\frac{f_0(x_0)}{d}x_2,
\end{align*}
and since the degree of $x_0$ in $Q(x_0,x_2)$ is $n\leq 2$, we must have $\deg f_1(x_0) \leq 1$. Then we can write $P(x_0,x_1)=c+ax_0+bx_1+Rx_0x_1$ for some $R\in\Z$ and where $a\neq b$. By the same argument as above, we obtain the period 1 polynomials in items (3) and (4) of Theorem \ref{n3thm} whose generated seeds are
\begin{eqnarray*}
\{x_0, x_1-x_2-1\}, \{x_1, -x_0x_2+x_0-x_2-1\}, \{x_2, x_0-x_1-1\}
\end{eqnarray*}
and
\begin{eqnarray*}
\{x_0, -x_1+x_2-1\big{\}}, \{x_1, x_0x_2+x_0-x_2+1\}, \{x_2, -x_0+x_1-1\}.
\end{eqnarray*}
If $a=b$ then $m=1$ or $2$. If $m=1$, from (\ref{P}) and (\ref{Q}),
\begin{align*}
Q(x_0,x_2)=f_1(x_0)+x_2,\qquad P(x_1,x_2)=\frac{x_2^n}{t(x_1)}\left(f_1(\frac{f_0(x_1)}{x_2})+x_1\right).
\end{align*}
Since $f_1(x_0)$ has constant term $a$,  $P(x_1,x_2)$ contains the term $x_2^n$. Therefore $n=1=\deg f_1(x_0)$. We can then write $P(x_0,x_1) = c + ax_0 + ax_1 + Rx_0x_1$. The same argument as above yields the period 1 polynomials in (5) of Theorem \ref{n3thm}, whose associated seed is
\begin{align*}
\big{\{}x_0&, x_1x_2+ax_1+ax_2+c\big{\}},\\
\big{\{}x_1&, x_0+x_2+a\big{\}},\\
\big{\{}x_2&, x_0x_1+ax_0+ax_1+c\big{\}}.
\end{align*}
If $m=2$, then $m\neq \deg P(x_0,0)$. From Lemma \ref{lemma1}, $f_{m}$ does not contain constant term, so $\deg f_{m}\geq 1$. Since $\deg d\leq \deg P(0,x_0)=1$, plug them into (\ref{bound1}), we get $n=2$ and $\deg f_m=1$.  Morover, from Lemma \ref{lemma2}, $\deg t\leq 1$. Plug them into (\ref{bound2}), we get $\deg g_n=0$. From (\ref{DP})
\begin{align*}
P(x_0,x_1)=f_2(x_1)x_0^2+f_1(x_1)x_0+f_0(x_1),
\end{align*}
and the degree of $x_1$ is $2$, it must be that $\deg f_1=2$. Mutating at $x_0$ to obtain $Q(x_0,x_2)$ gives (notice $P(x_0,0)=P(0,x_0)$),
\begin{align*}
Q(x_0,x_2)=f_2(x_0)P(x_2,0)+f_1(x_0)x_2+x_2^2.
\end{align*}
This contains the term $x_0^2x_2$, which contradicts that $\deg g_2=0$.

\item $\deg P(x_0, 0) = \deg P(0, x_0) = 2$.

From (\ref{bound3}), we see that $m=n=\deg P(x_0,0)=\deg P(0,x_0)=2$ and $\deg g_n(x_0)=\deg f_m(x_0)=0$. Hence,
\begin{align}
\notag P(x_0,x_1)=&f_2(x_1)x_0^2+f_1(x_1)x_0+f_0(x_1),\\
\label{Q2}Q(x_0,x_2)=&\frac{f_2(x_0)P^2(x_0,0)}{d(x_0)}+\frac{f_1(x_0)P(x_0,0)}{d(x_0)}x_2+\frac{f_0(x_0)}{d(x_0)}x_2^2.
\end{align}
The first equation gives $\deg f_0(x_1) = \deg P(0, x_1) = 2$.
From the second, by looking at the coefficient of $x_2^2$, we have $\deg d(x_0) \leq \deg f_0(x_0) = 2$. Moreover, remember we had $Q(x_0, x_2) = g_2(x_2)x_0^2 + g_1(x_2)x_0 + g_0(x_2)$. Since $\deg g_2(x_0) = 0$, $Q$ does not contain terms divisible by $x_2x_0^2$. By looking at the coefficient of $x_2$ in $Q$ in equation (\ref{Q2}), we have that $\deg f_1(x_0) + \deg P(x_0, 0) - \deg d(x_0) \leq 1$, from which $\deg f_1(x_0) \leq 1$.
Since $\deg f_2(x_0)=0$, from (\ref{Q2}) we have that $d(x_0) \mid P^{2}(x_0,0)$.

If $d(x_0)\nmid P(x_0,0)$, then $d(x_0)=cr^2(x_0)$ for some monic linear factor $r(x_0)$ and constant $c\in\Z$. We will omit the constant $c$ as it will factor later, so simply write $d(x_0) = r^2(x_0)$. In particular, we have $\deg d(x_0) = 2$, from which $\deg f_0(x_0)  = 2$. Moreover, since $d(x_0)  \mid  f_1(x_0)P(x_0, 0)$, then $\deg f_1(x_0) = 1$. From the divisibility relations, we can write
\begin{align*}
&P(x_0,0)=t(x_0)r(x_0),\qquad  f_0(x_0)=Ar^2(x_0),\\ &f_1(x_0)=Br(x_0), \qquad f_2(x_0)=C,
\end{align*}
for some polynomial $t$ and constants $A, B, C\in\Z$.
Expression (\ref{Q2}) can then be simplified:
\begin{align*}
Q(x_0,x_2)=Ax_2^2 + Bt(x_0)x_2 + Ct^2(x_0)
\end{align*}
We also have
\begin{align*}
Ar^2(x_2) = P(0,x_2) = Ax_2^2 + Bt(0)x_2 + Ct^2(0).
\end{align*}
From both equations, we have
\begin{align*}
t^2(0)Q(x_0,x_2) = Ar^2 \left(\frac{x_2t(0)}{t(x_0)} \right)t^2(x_0).
\end{align*}
Since $Q(x_0,x_2)$ is irreducible, then $t(0)=0$. From above, we have $Ar^2(x_2) = Ax_2^2$ and so $d(x_0) = r^2(x_0) = x_0^2$. Moreover, $t(0) = 0$ implies $x_0  \mid  t(x_0)$ and so $d(x_0) =x_0^2  \mid  t(x_0)r(x_0) = P(x_0, 0)$. This is a contradiction with our initial assumption.

Now assume $d(x_0)  \mid  P(x_0, 0)$.

If $P(x_0,x_1)$ has a nonzero constant term, then since $d(x_0) \mid P(x_0,0)$ and $d(x_0) \mid P(0,x_0)$, we see $P(x_0,0)=P(0,x_0)$. $P(x_0,x_1)$ must be of the form
$\displaystyle P(x_0,x_1)=ax_0^2+ax_1^2+bx_0x_1+cx_0+cx_1+d$.
In this case, we obtain the period 1 polynomials in item (6) of Theorem \ref{n3thm}, whose generated seeds are
\begin{align*}
\{x_0&, x_1^2+x_2^2+ax_1x_2+bx_1+bx_2+c\},\\
\{x_1&, x_0^2+x_2^2+ax_0x_2+bx_0+bx_2+c\},\\
\{x_2&, x_0^2+x_1^2+ax_0x_1+bx_0+bx_1+c\},
\end{align*}
and
\begin{eqnarray*}
\{x_0, -x_1^2-x_2^2+ax_1x_2+c\}, \{x_1, x_0^2+x_2^2+ax_0x_2-c\}, \{x_2, -x_0^2-x_1^2+ax_0x_1+c\}.
\end{eqnarray*}
If $P(x_0,x_1)$ does not have a constant term, then $d(x_0)=ax_0^2+bx_0$ and $P(x_0,x_1)$ is of form $P(x_0,x_1) = d_1(ax_0^2+bx_0)+d_2(ax_1^2+bx_1)+cx_0x_1$.
In this case, we obtain a special case of (6) and the general polynomial in (7) of Theorem \ref{n3thm}. Their generated seeds are
\begin{align*}
\{x_0&, x_1^2+x_2^2+ax_1x_2+bx_1+bx_2\},\\
\{x_1&, x_0^2+x_2^2+ax_0x_2+bx_0+bx_2\},\\
\{x_2&, x_0^2+x_1^2+ax_0x_1+bx_0+bx_1\}
\end{align*}
and
\begin{eqnarray*}
\{x_0, -x_1^2-x_2^2+ax_1x_2\}, \{x_1, x_0^2+x_2^2-ax_0x_2\},
\{x_2, -x_0^2-x_1^2+ax_0x_1\}.
\end{eqnarray*}

\item Either $\deg P(x_0, 0) = 0$ or $\deg P(0, x_0) = 0$.

From Lemma (\ref{lemma1}), $\deg P(x_0,0)=0 \Longrightarrow \deg P(0,x_0)=0$. Thus, we only consider the case where $\deg P(0,x_0)=0$, i.e., $P(0, x_0)$ is a nonzero constant $a\in\Z$. Observe that $d(x_0) \mid P(0,x_0)$ implies that $d(x_0)$ is a constant $d$. Equation (\ref{P}) can then be simplified to
\begin{align}
\label{case5.1}P(x_1,x_2)=\sum_{k=0}^{n}\frac{g_k(x_1)a^k}{t(x_1)}x_2^{n-k},
\end{align}
from which $t(x_1) \mid g_k(x_1)$ for all $k$. Therefore $t(x_1) \mid Q(x_0,x_1)$. Since $Q(x_0,x_1)$ is irreducible, $t(x_1)$ is a constant $t$. Equation (\ref{case5.1}) with $x_2 = 0$ and $x_1 = 0$ yield
\begin{eqnarray*}
P(0,x_2) &=& a = \sum_{k=0}^{n}\frac{g_k(0)a^k}{t(x_1)}x_2^{n-k},\\
P(x_1,0) &=&\frac{a^{n}g_{n}(x_1)}{t}= \frac{ag_n(x_1)}{g_n(0)}.
\end{eqnarray*}

From the first one, we have $t=g_n(0)a^{n-1}$ and $x_1 \mid g_k(x_1)$ for $0 \leq k\leq n-1$. Now mutating $P(x_1,x_2)$ at $x_0$ gives,
\begin{align}
\notag Q(x_0,x_2)=&\frac{x_2^m}{d}P(\frac{P(x_0,0)}{x_2},x_0)\\
\label{done1}          =&\frac{x_2^m}{td}\sum_{k=0}^{n}g_k(\frac{ag_n(x_0)}{g_n(0)x_2})a^kx_0^{n-k}\\
\label{done2}          =&\sum_{k=0}^{n}g_k(x_2)x_0^k\\
\notag          =&\sum_{l=0}^{m}h_k(x_0)x_2^l.
\end{align}
Next we compute $h_m$. Since $x_2 \mid g_k(x_2)$ for $k=0,1,2\ldots, n-1$, in (\ref{done1}), only the term $\frac{x_2^m}{td}g_n(\frac{ag_n(x_0)}{g_n(0)x_2})a^n$ contains term $x_2^m$. Indeed,
\begin{align*}
h_m(x_0)x_2^m=\frac{g_n(0)a^n}{td}x_2^m.
\end{align*}
Therefore $h_m(x_0)$ is a constant, so only $g_0(x_2)$ contains the term $x_2^m$, say $g_0(x_2)=bx_2^m+\ldots$. Since $\deg g_k<m$ for $k=1,2,\ldots, n-1$, 
\begin{align*}
x_2 \mid \frac{x_2^m}{td}g_k(\frac{ag_n(x_0)}{g_n(0)x_2})a^kx_0^{n-k},\quad k=1,2,\ldots n-1.
\end{align*}
Setting $x_2=0$, the above expressions all vanish, so
\begin{align*}
Q(x_0,0)=h_0(x_0)=\frac{b}{td}\Big{(}\frac{ag_n(x_0)}{g_n(0)}\big{)}^{m}x_0^n.
\end{align*}
Since $\deg Q(x_0,0)\leq n$, $\deg g_n(x_0)=0$ (or $m=0$, then $P(x_0,x_1)$ only depends on $x_0$). (\ref{done1}) is simplified as
\begin{align*}
\frac{x_2^m}{td}\sum_{k=0}^{n}g_k(\frac{a}{x_2})a^kx_0^{n-k}
=\sum_{k=0}^{n}g_{n-k}(x_2)x_0^{n-k},
\end{align*}
Comparing coefficients on both sides, we see,
\begin{align*}
\frac{x_2^m}{td}g_k(\frac{a}{x_2})a^k
=g_{n-k}(x_2), \qquad k=0,1,2\ldots n.
\end{align*}
Taking $k=0$ in above equation, and noticing that $t=g_{n}(0)a^{n-1}$, we have $n=1$ or $a=\pm1$. We thus obtain the period 1 polynomials in items (8), (9) and (10) of Theorem \ref{n3thm}.
Their generated seeds are
\begin{eqnarray*}
\{x_0, \pm x_1x_2+a\}, \{x_1, \pm x_0+x_2\}, \{x_2,\pm x_0x_1+a\}
\end{eqnarray*}
and
\begin{align*}
&\{x_0, 1+x_1^mx_2^n+\sum_{0<i<m\atop 0<j<n}C_{i,j}(x_1^ix_2^j+x_1^{m-i}x_2^{n-j})\},\\
&\{x_1, x_0^n+x_2^m+\sum_{0<i<m\atop 0<j<n}C_{i,j}(x_0^j x_2^{m-i}+x_0^{n-j}x_2^{i})\},\\
&\{x_2, 1+x_0^mx_1^n+\sum_{0<i<m\atop 0<j<n}C_{i,j}(x_0^ix_1^j+x_0^{m-i}x_1^{n-j})\}.
\end{align*}
When $m\equiv n\mod 2$,
\begin{align*}
&\{x_0, -1+(-1)^{m+1}x_1^mx_2^n+\sum_{0<i<m\atop 0<j<n}C_{i,j}(x_1^ix_2^j+(-1)^{m+j+i}x_1^{m-i}x_2^{n-j})\},\\
&\{x_1, -x_0^n-x_2^m+\sum_{0<i<m\atop 0<j<n}C_{i,j}((-1)^jx_0^j x_2^{m-i}+(-1)^{i}x_0^{n-j}x_2^{i})\},\\
&\{x_2, -1+(-1)^{m+1}x_0^mx_1^n+\sum_{0<i<m\atop 0<j<n}C_{i,j}(x_0^ix_1^j+(-1)^{m+j+i}x_0^{m-i}x_1^{n-j})\}.
\end{align*}

\end{enumerate}

\section{Examples of period 1 polynomials and seeds} \label{zoo}

In this section, we prove that several families of polynomials $P = P(x_1, \ldots, x_{n-1})$ are 1 periodic. In our first subsection, we prove the Expansion and Reflection Lemmas (Lemmas \ref{explemma} and \ref{reflemma}), which can be applied to period 1 polynomials to generate more period 1 polynomials. In the second subsection, we prove Theorem \ref{families}.

The proof that $P$ is a period 1 polynomial for each item in Theorem \ref{families} will simply consist of writing down the intermediate polynomials $P_i$. In general, it is easy to verify that $P_{i-1}=\tau_P(P_i)$ for all $i$ (and $P_{n-1} = \tau_P(P_0)$), showing the seed is a period 1 seed.

The importance of period 1 polynomials stems from Theorem \ref{period1} that says that if $\widehat{P}_0 = P_0$, then $P$ generates a Laurent phenomenon sequence. Theorem \ref{hat} gives sufficient conditions for $\widehat{P}_0 = P_0$ to be satisfied. In most of the seeds given below, the reader can easily verify that the intermediate polynomials $P_i$ that depend on $x_0$ are the ones for which $P = P_0$ depends on $x_i$, and so condition (1) of Theorem \ref{hat} is satisfied. The only exceptions will be the families in Subsections \ref{zooex9} and \ref{zooex10}, but these families satisfy condition (2) of Theorem \ref{hat} instead. Hence, the truth of Corollary \ref{laurentproperty} will follow from the seeds for the polynomials in Theorem \ref{families} that we give below.

\subsection{Proofs of the Expansion and Reflection Lemmas}

\subsubsection{Proof of Lemma \ref{explemma}}

Let $t = \bf (x,F)$ be the period 1 seed generated by $F$ and let $k\in\N$ be any positive integer. We prove that $G(x_1, x_2, \ldots, x_{nk-1}) = F(x_k, x_{2k}, \ldots, x_{(n-1)k})$ generates a period 1 seed.
Let $\textbf{G} = (G_1, \ldots, G_{kn})$, where $G_{k(i-1)+j}(x_1, \ldots, \widehat{x}_{k(i-1)+j}, \ldots, x_{kn}) = F_i(x_j, \ldots, \widehat{x}_{k(i-1) + j}, \ldots, x_{k(n-1) + j})$ for all $1\leq i\leq n$, $1\leq j\leq k$.
It is clear that $t' = \bf (y, G)$ is a seed and $G_{kn} = G$. It will then suffice to show that $t' = \bf (y, G)$ has period 1.
Observe that $G_1(x_2, \ldots, x_{kn}) = F_1(x_{k+1}, \ldots,  x_{k(n-1)+1})$ and $G_{kn}(x_1, \ldots, x_{kn-1}) = F_n(x_k, \ldots, x_{k(n-1)})$, so $G_{kn}$ is the downshift of $G_1$. We need to verify $G_s = \tau_{x_1, G_1}(G_{s+1})$ for all $1\leq s\leq nk-1$.

If $s \neq 0 \pmod{k}$, then $s = k(i-1) + j$ for some $i$ and $1 \leq j < k$. In this case, observe that $G_s$ is the downshift of $G_{s+1}$ by definition of $\textbf{G}$.
Moreover, the polynomial $G_{s+1}$ (and also ) only depends on the variables
$x_{j+1}, \ldots, \widehat{x}_{k(i-1)+j+1}, \ldots, x_{k(n-1)+j+1}$, and in particular, not $x_1$.
Hence, $\tau(G_{s+1})$ is the downshift of $G_{s+1}$, which is $G_s$ as remarked above.

If $s = 0 \pmod{k}$, then $s = k(i-1) + k$ for some positive integer $i$, and so $s+1 = ki + 1$. In this case, $G_s = F_i(x_k, \ldots, \widehat{x}_s, \ldots, x_{k(n-1)+1})$ and $G_{s+1} = F_{i+1}(x_1, \ldots, \widehat{x}_{s+1}, \ldots, x_{k(n-1) + 1})$. Since $(\bf{x}, \bf{F})$ is a period 1 seed, we have that $\tau_{x_1, F_1}(F_i) = F_{i-1}$.
Hence, $\tau_{x_1, G_1}(G_{s+1}) = F_{i-1}(x_k, \ldots, \widehat{x}_s, \ldots, x_n) = G_s$.

\subsubsection{Proof of Lemma \ref{reflemma}}
Let $\bf (x, F)$ be the period 1 seed whose intermediate polynomials are $F_i$, $0\leq i\leq n-1$. Define $G_i(x_0,\ldots, \widehat{x}_{i},\ldots,x_{n-1}) = F_{n-i-1}(x_{n-1},\ldots,\widehat{x}_{i},\ldots,x_1)$ for all $i$, and $\textbf{G} = (G_1, \ldots, G_{n})$.
We show that $\bf (x, G)$ is also a period 1 seed generated by $G$.

From the relation $\kappa_F(F_{n-i}) = F_{n-i+1}$, we have that replacing $x_n$ with $\displaystyle\frac{F_{n-1}|_{x_{n-i} = 0}}{x_{-1}}$ in $F_{n-i}$ and upshifting yields
\begin{align}
\label{eq1}F_{n-i} \left(x_1,\ldots,\widehat{x}_{n-i+1},\ldots,x_{n-1}, \frac{F_{n-1}(x_1,\ldots,x_{n-1})|_{x_{n-i+1}=0}}{x_0} \right).
\end{align}
Then, $F_{n-i+1}$ comes from dividing (\ref{eq1}) by the largest power of $F_{n-1}(x_1, \ldots, x_{n-1})|_{x_{n-i+1}=0}$ that divides it, and adjusting by a monomial factor.

We show that $\bf (x, G)$ is a period 1 seed by verifying that $\tau_G(G_i) = G_{i-1}$ for all $i$.
Similar to before, replacing $x_0$ with $\displaystyle\frac{G_0|_{x_i=0}}{x_n}$ in $G_i$ and downshifting yields
\begin{align}
G_i\big(\frac{G_0(x_0,x_1,\ldots, x_{n-2})|_{x_{i-1}=0}}{x_{n-1}}, x_0,x_1,\ldots,\widehat{x}_{i-1},\ldots, x_{n-2}\big)\nonumber\\
\label{eq2}= F_{n-i-1}(x_{n-2},\ldots,\widehat{x}_{i-1},\ldots,x_1,x_0,
\frac{F_{n-1}(x_{n-2},\ldots, x_0)|_{x_{i-1}=0}}{x_{n-1}}).
\end{align}
Then, $\tau_G(G_i)$ comes from dividing (\ref{eq2}) by the largest power of $G_0(x_0, \ldots, x_{n-2})|_{x_{i-1}=0}$
that divides it, and adjusting by a monomial factor.

Notice that replacing $x_j$ by $x_{n-j-1}$ for all $j$ in (\ref{eq1}) gives (\ref{eq2}).
Therefore,
\begin{align*}
\tau_G(G_i) = F_{n-i+1}(x_{n-1}, \ldots, \widehat{x}_{i-1}, \ldots, x_1) = G_{i-1}(x_0, \ldots, \widehat{x}_{i-1}, \ldots, x_{n-1}) = G_{i-1}.\end{align*}

\subsection{Period 1 polynomials and their generated seeds}

As remarked at the beginning of the section, we will show the seeds generated by the polynomials $P$ in Theorem \ref{families}.
In all cases, we obviously have $P_0 = P$ and $P_{n-1}$ be the downshift of $P$, so it will suffice to show the intermediate polynomials $P_i$ for $0 < i < n -1$.

\subsubsection{Gale-Robinson seed.}\label{zooex0}
We begin with the {\it Gale-Robinson polynomial} $P = Ax_px_{n-p} + Bx_qx_{n-q} + Cx_rx_{n-r}$, $p < q < r$ and $p + q + r = n$. There are many cases to consider when writing the seed for $P$. As the Laurent property for this polynomial is already well-known, we only write the seed in the case that $r < n/2$ (the other cases $q < n/2 < r$, $p < n/2 < r$, $n/2 < p$ and where there are some equalities among some of these quantities, are similar).

We first give the intermediate polynomials $P_i$ for $i\in\{p, q, r, n-p, n-q, n-r\}$:
\begin{itemize}
\item $P_p = ABx_qx_{2p}x_{p+r} + ACx_rx_{2p}x_{p+q} + Cx_0x_{p+r}x_{n+p-r} + Bx_0x_{p+q}x_{n+p-q}$.
\item $P_q = ABx_{q-p}x_px_{2q}x_{q+r} + ABx_0x_{2q-p}x_{p+q}x_{q+r} + ACx_0x_{r+q-p}x_{p+q}x_{2q} +$\\
$BCx_{q-p}x_rx_{p+q}x_{2q} + Cx_0x_{q-p}x_{q+r}x_{n+q-r}$.
\item $P_r = ABx_0x_{r-p}x_{p+r-q}x_{q+r}x_{2r} + ACx_{r-p}x_px_{r-q}x_{q+r}x_{2r} + ABx_0x_{q+r-p}x_{r-q}x_{p+r}x_{2r} +$\\
$BCx_{r-p}x_qx_{r-q}x_{p+r}x_{2r} + ACx_0x_{r-q}x_{2r-p}x_{p+r}x_{q+r} + BCx_0x_{r-p}x_{2r-q}x_{p+r}x_{q+r}$.
\item $P_{n-r} = ABx_qx_{p+q-r}x_{2p}x_{n+q-r} + ACx_qx_{2p+q-r}x_px_{n+q-r} + Cx_0x_{p+q-r}x_{n+p-r}x_{n+q-r} +$\\
$ABx_{p+q-r}x_{2q}x_px_{n+p-r} + BCx_qx_{n+q-2r}x_px_{n+p-r}$.
\item $P_{n-q} = ABx_rx_px_{n+p-2q} + ACx_rx_{2p}x_{p+r-q} + Bx_0x_{p+r-q}x_{n+p-q} + Cx_px_{r-q}x_{n+p-q}$.
\item $P_{n-p} = Ax_0x_{n-2p} + Bx_{q-p}x_r + Cx_{r-p}x_q$.
\end{itemize}
For the remaining polynomials $P_j$, pick the largest $i < j$ in the set $\{0, p, q, r, n-p, n-q, n-r\}$, and let $P_j$ be $P_i$ after upshifting $j-i$ times.

\subsubsection{Symmetric with second powers seed.}\label{zooex1}
If $P$ is of the form (1) in Theorem \ref{families}, the intermediate polynomials are
$\displaystyle P_i = P(x_0, \ldots, \widehat{x}_i, \ldots, x_{n-1})$ for all $0 < i < n-1$.

\begin{exam}\label{zooexample1}
When $n=3$, these family of polynomials accounts for the family (6) in Theorem \ref{n3thm}.
\end{exam}

\subsubsection{Sink-type binomial seed.}\label{zooex2}
If $P$ is of the form (2) in Theorem \ref{families}, the intermediate polynomials are
$\displaystyle P_i = \prod_{j=0}^{i-1}{x_j^{a_{n-i-j}}} + \prod_{j=1}^{n-i-1}{x_{i+j}^{a_j}}$ for all $0 < i < n-1$.

\begin{exam}\label{zooexample2}
When $n=6$, the polynomial $P = x_1^2x_3^3x_5 + 1$ generates the period 1 seed
\begin{eqnarray*}
\{x_0, x_1^2x_3^3x_5 + 1\}, \{x_1, x_2^2x_4^3 + x_0\}, \{x_2, x_3^2x_5^3 + x_1\}, \{x_3, x_0^3x_2 + x_4^2\}, \{x_4, x_1^3x_3 + x_5^2\}, \{x_5, x_0^2x_2^3x_4 + 1\}.
\end{eqnarray*}
\end{exam}

\subsubsection{Extreme seed.}\label{zooex3}
If $P$ is of the form (3) in Theorem \ref{families}, the intermediate polynomials are
$P_i = x_{i-1} + x_{i+1} + A$ for all $0 < i < n-1$.

\begin{exam}\label{zooexample3}
When $n=4$, the polynomial $P = x_1x_3 + 3(x_1 + x_2 + x_3) + 2$ generates the period 1 seed
\begin{align*}
\{x_0, x_1x_3 + 3(x_1 + x_2 + x_3) + 2\}, \{x_1, x_0 + x_2 + 3\},
\{x_2, x_1 + x_3 + 3\}, \{x_3, x_0x_2 + 3(x_0 + x_1 + x_2) + 2\}.
\end{align*}
\end{exam}

\subsubsection{Singleton seed.}\label{zooex4}
If $P$ is a single variable polynomial of the form (4) in Theorem \ref{families}, the intermediate polynomials are $\displaystyle P_i = P\left(x_{i + \frac{n}{2}\pmod{n}}\right)$.

\begin{exam}\label{zooexample4}
When $n=4$, the polynomial $P = x_2^2 + 2x_2 - 7$ generates the period 1 seed
\begin{eqnarray*}
\{x_0, x_2^2 + 2x_2 - 7\}, \{x_1, x_3^2 + 2x_3 - 7\}, \{x_2, x_0^2 + 2x_0 - 7\}, \{x_3, x_1^2 + 2x_1 - 7\}, \{x_4, x_3^2 + 2x_3 - 7\}.
\end{eqnarray*}
\end{exam}

\begin{rem}
These polynomials correspond to $(\frac{n}{2})$-expansions of the period 1 polynomials found in Theorem \ref{n2thm}.
\end{rem}

\subsubsection{Chain seed.}\label{zooex5}
If $P$ is of the form (5) in Theorem \ref{families}, the intermediate polynomials are
\begin{eqnarray*}
P_{2j-1} &=& x_0 + x_{n-1} + A \textrm{, for all } 0 < j \leq (n-1)/2\\
P_{2j} &=& F(x_0, \ldots, \widehat{x}_{2j}, \ldots, x_{n-1}) \textrm{, for all } 0 < j < (n-1)/2.
\end{eqnarray*}

\subsubsection{Multilinear symmetric seed.}\label{zooex6}
If $P$ is of the form (6) in Theorem \ref{families}, the intermediate polynomials are
\begin{eqnarray*}
P_{2j-1} &=& E_1(x_0, \ldots, \widehat{x}_{2j-1}, \ldots, x_{n-1}) + A \textrm{, for all } 0 < j \leq (n-1)/2\\
P_{2j} &=& F(x_0, \ldots, \widehat{x}_{2j}, \ldots, x_{n-1}) \textrm{, for all } 0 < j < (n-1)/2.
\end{eqnarray*}

\subsubsection{$r$-Jumping seed.}\label{zooex7}
Let $r, n, A\in\N$ be constants and $P$ a polynomial in the setup of (7) of Theorem \ref{families}.
For any $a\geq 0$ such that $a + r < n$, define \[ F_a =
\sum_{k = 0}^{\lfloor \frac{n - a}{r} \rfloor - 1}{x_{a + rk}\cdot x_{a + rk + r - 1}} + A. \]
With this definition, notice that $P = F_1$.
The intermediate polynomials are:
\begin{itemize}
	\item $\displaystyle P_j = \big(\sum_{i=1}^{j+1}{a_i^j\cdot F_{j+2-i}}\big)\big|_{x_j = 0}$ and
	$\displaystyle P_{n - j - 1} = \big(b_0^j\cdot F_0 + \sum_{i=1}^j{b_i^j\cdot F_{r-i}}\big)\big|_{x_{n-j-1} = 0}$ for all $1\leq j\leq r - 2$, where $\displaystyle a_i^j = \prod_{k=0}^{j-i}{x_k}\cdot\prod_{k=0}^{i-2}{x_{n-r+j-k}}$ and
	$\displaystyle b_i^j = \prod_{k=0}^{j-i-1}{x_{r-j+k}}\cdot\prod_{k=1}^i{x_{n-k}}$.
	\item $\displaystyle P_j = (\sum_{i=1}^r{a_i\cdot F_{r-i+1}})\big|_{x_j = 0}$ for all $r-1\leq j\leq n-r$, where $\displaystyle a_i = \prod_{k=0}^{r-1-i}{x_k}\cdot\prod_{k=0}^{i-2}{x_{n-k-1}}$.
\end{itemize}

\begin{exam}\label{zooexample7}
When $n=7$, the polynomial $P = x_1x_3 + x_4x_6$ generates the period 1 seed
\begin{align*}
\{x_0,\quad& x_1x_3 + x_4x_6\},\\
\{x_1,\quad& x_0x_2x_4 + x_4x_5x_6\},\\
\{x_2,\quad& x_0x_1x_3x_5 + x_1x_3x_5x_6 + x_4x_5x_6^2\},\\
\{x_3,\quad& x_0^2x_1x_2 + x_0x_2x_4x_6 + x_4x_5x_6^2\},\\
\{x_4,\quad& x_0^2x_1x_2 + x_0x_1x_3x_5 + x_1x_3x_5x_6\},\\
\{x_5,\quad& x_0x_1x_2 + x_2x_4x_6\},\\
\{x_6,\quad& x_0x_2 + x_3x_5\}
\end{align*}
Observe that this is an example of a binomial that generates a period 1 seed whose intermediate polynomials are not all binomials. Jumping polynomials are not classified by Theorem \ref{binomialthm}.
\end{exam}

\subsubsection{$r$-Hopping seed.}\label{zooex8}
Let $r, n, A, B\in\N$ be constants and $P$ a polynomial in the setup of (8) of Theorem \ref{families}.
For any $0\leq a < n-r$, define \[ F_a = \sum_{k=0}^{\lfloor \frac{n - a}{r} \rfloor - 1}{x_{a + rk}\cdot x_{a + rk + r - 1}} + A\sum_{k=0}^{\lfloor\frac{n-a}{r}\rfloor - 2}{x_{a + rk + r - 1}\cdot x_{a + rk + r}} + B. \]
With this definition, notice that $P = F_1$.
The intermediate polynomials are:

\begin{itemize}
	\item $\displaystyle P_j = \big(\sum_{i=1}^{j+1}{a_i^j\cdot F_{j+2-i}}\big)\big|_{x_j = 0}$ and
	$\displaystyle P_{n-j-1} = \big(b_0^j\cdot F_0 + \sum_{i=1}^j{b_i^j\cdot F_{r-i}}\big)\big|_{x_{n-j-1} = 0}$ for all $1\leq j\leq r - 2$, where
$\displaystyle a_i^j = \prod_{k=0}^{j-i}{x_k}\cdot\prod_{k=0}^{i-2}{x_{n-r+j-k}}$ and $\displaystyle b_i^j = \prod_{k=0}^{j-i-1}{x_{r-j+k}}\cdot\prod_{k=1}^i{x_{n-k}}$.
	\item $\displaystyle P_j = (\sum_{i=1}^r{a_i\cdot F_{r-i+1}})\big|_{x_j = 0}$ for all $r-1\leq j\leq n-r$, where $\displaystyle a_i = \prod_{k=0}^{r-1-i}{x_k}\cdot\prod_{k=0}^{i-2}{x_{n-k-1}}$.
\end{itemize}

\begin{rem}
In the definitions of $a_i^j, b_i^j$ and $a_i$ in the jumping and hopping seeds, a product $\displaystyle\prod_{k=L}^M{X_k}$ is defined to be $1$ if $M < L$.
\end{rem}

\subsubsection{Flip-symmetric binomial seed.}\label{zooex9}
These are the seeds discussed on Section 4. We give an explicit description here for consistency.

Let $L, R \subset [n-1]$ be disjoint subsets, $a: L\cup R\rightarrow\N$ a map and $P$ a polynomial in the setup of (9) of Theorem \ref{families}.

Let $\bf a$ = $(a_0, a_1, \ldots, a_{n-1}) \in \Z_{\geq 0}^n$ be the vector with nonnegative entries such that $a_i = a(i+1)$ if $i+1\in L$, $a_j = -a(j+1)$ if $j+1\in R$ and $a_k = 0$ for the remaining indices $k$. Define the vectors $\bf b^{(1)}, \ldots, b^{(n-1)}$ recursively as follows. Let $\bf b^{(i)}$ = $(b_0^{(i)}, \ldots, b_{n-1}^{(i)})$, $\bf c^{(i)}$ = $(c_0^{(i)}, \ldots, c_{n-1}^{(i)})$ for all $i$ and begin defining $\bf b^{(1)} = a$. Then let $c_0^{(i)} = -b_0^{(i-1)}$ and $c_j^{(i)} = b_j^{(i-1)} + b_0^{(i-1)} \cdot |a_j| \cdot \mathbf{1}_{\{a_{n-i}a_j < 0\}}$ for all $0\leq j\leq n-1$ and $i>1$ (the indicator function $\mathbf{1}_{\{a_{n-i}a_j < 0\}}$ is $1$ if $a_{n-i}a_j < 0$ and is $0$ otherwise). Finally, let $\bf b^{(i)}$ be the vector that comes from permuting $\bf c^{(i)}$ with the permutation $(0, 1, 2, \ldots, n-1) \rightarrow (1, 2, \ldots, n-1, 0)$.
We can now show the intermediate polynomials $P_i$ for $0 < i < n-1$. Polynomial $P_i$ is derived from vector $\bf b^{(n-i)}$ as follows:
let $L_i$ (resp. $R_i$) be the set of indices $0\leq k\leq n-1$ such that $b_k^{(n-i)} > 0$ (resp. $b_k^{(n-i)} < 0$). Then $\displaystyle P_i = \prod_{k\in L_i}{x_i^{b_k^{(n-i)}}} + \prod_{k\in R_i}{x_i^{-b_k^{(n-i)}}}$.

From Theorem \ref{mutualdoublequiver} and the definition of B-matrix mutation in (\ref{bmatrixmutation}), the resulting seed $\bf (x, P)$ is a period 1 seed.

\begin{exam}\label{zooexample9}
When $n = 8$, the polynomial $P = x_1^3x_7^2 + x_4^3x_2x_6$ generates the period 1 seed
\begin{align*}
\{x_0, x_1^3x_7 + x_2x_4^3x_6\},
\{x_1, x_0^2x_3x_5^3x_7 + x_2^5x_4^6x_6^2\},
\{x_2, x_0x_3^5x_5^6 + x_1^5x_4x_6^3\},
\{x_3, x_1x_4^5x_6^6 + x_2^5x_5x_7^3\},\\
\{x_4, x_0^3x_2x_5^5 + x_1^9x_3^5x_6\},
\{x_5, x_1^3x_3x_6^5 + x_2^9x_4^5x_7\},
\{x_6, x_0x_2^3x_4x_7^3 + x_1^3x_3^9x_5^5\},
\{x_7, x_0^3x_6 + x_1x_3^3x_5\}
\end{align*}
We now demonstrate how to obtain $\bf b^{(5)}$ from $\bf b^{(4)}$. We have $\bf a$ = $(0, 3, -1, 0, -3, 0, -1, 2)$ (corresponding to polynomial $P$) and $\bf b^{(4)}$ = $(3, -9, 1, -5, 0, 5, -1, 0)$ (corresponding to polynomial $P_4$).

Notice that $a_5 = -3$ implies $a_5a_j < 0$ if and only if $a_j > 0$.
The only indices $j$ for which $a_j > 0$ are $1$ and $7$.
Then $c_1^{(0)} = -b_1^{(4)}$, $c_1^{(5)} = b_1^{(4)} + b_0^{(4)} \cdot |a_1| = -9 + (3)(3) = 0$, $c_7^{(5)} = b_7^{(4)} + b_0^{(4)} \cdot |a_7| = 0 + (3)(2) = 6$ and $c_j^{(5)} = b_j^{(4)}$ for the remaining indices $j$. Thus $\bf c^{(5)}$ = $(-3, 0, 1, -5, 0, 5, -1, 6)$ and $\bf b^{(5)}$ = $(0, 1, -5, 0, 5, -1, 6, -3)$. This corresponds to $P_3 = x_1x_4^5x_6^6 + x_2^5x_5x_7^3$.
\end{exam}

\subsubsection{Balanced seed.}\label{zooex10}
Let $L, R \subset [n-1]$ be disjoint subsets, $a: L\cup R\rightarrow\N$ a map, $M_1, M_2$ monomials, $m>1$ an integer and $P$ a polynomial in the setup of (10) of Theorem \ref{families}.

Notice that $P' = M_1^m + M_2^m$ is a binomial with flip-symmetry seed, so it generates a period 1 seed.
Let $P_j'$ be the intermediate polynomials of this seed.
From the analysis in \ref{zooex9}, we see that each $P_i'$ is a binomial of the form $M_1^{(i)} + M_2^{(i)}$, where each of the monomials $M_1^{(i)}, M_2^{(i)}$ is an $m$-th power of a monomial. Say that $(N_1^{(i)})^m = M_1^{(i)}$ and $(N_2^{(i)})^m = M_2^{(i)}$, then the intermediate polynomials are $\displaystyle P_i = (N_1^{(i)})^m + (N_2^{(i)})^m + \sum_{k=1}^{\lfloor m/2 \rfloor}{(A_i\cdot (N_1^{(i)})^k (N_2^{(i)})^{m-k} + A_i\cdot (N_1^{(i)})^{m-k}(N_2^{(i)})^k)}$.

\subsubsection{Vector sum seed.}\label{zooex11}
Let $a_1, \ldots, a_{n-1}\in\N$ be constants, $B$ a finite set of vectors $\N^{n-1}$ and $P$ a polynomial in the setup of (11) in Theorem \ref{families}. For ease of notation, let $B' = B\cup\{(0, \ldots, 0)\}$ and $C_{(0, \ldots, 0)} = 1$. The intermediate polynomials are
\begin{eqnarray*}
P_i &=& \sum_{b\in B'}{(C_b\cdot x_0^{b_{n-i}}\ldots x_{i-1}^{b_{n-1}}x_{i+1}^{a_1 - b_1}\ldots x_{n-1}^{a_{n-1-i} - b_{n-1-i}})}\\
&+& \sum_{b\in B'}{(C_b\cdot x_0^{a_{n-i} - b_{b_{n-i}}}\ldots x_{i-1}^{a_{n-1}-b_{n-1}}x_{i+1}^{b_1}\ldots x_{n-1}^{b_{n-1-i}})} \textrm{, for all $0 < i < n-1$}.
\end{eqnarray*}

\begin{exam}\label{zooexample11}
The polynomial $P = 1 + x_1^3x_2^2x_3^4x_4^2 + 2x_1x_2x_3^2x_4 + 2x_1^2x_2x_3^2x_4$ generates the period 1 seed
\begin{align*}
\{x_0,\quad& 1 + x_1^3x_2^2x_3^4x_4^2 + 2x_1x_2x_3^2x_4 + 2x_1^2x_2x_3^2x_4\},\\
\{x_1,\quad& x_0^2 + x_2^3x_3^2x_4^4 + 2x_0x_2^2x_3x_4^2 + 2x_0x_2x_3x_4^2\},\\
\{x_2,\quad& x_3^3x_4^2 + x_0^4x_1^2 + 2x_0^2x_1x_3^2x_4 + 2x_0^2x_1x_3x_4\},\\
\{x_3,\quad& x_4^3 + x_0^2x_1^4x_2^2 + 2x_0x_1^2x_2x_4^2 + 2x_0x_1^2x_2x_4
\},\\
\{x_4,\quad& 1 + x_0^3x_1^2x_2^4x_3^2 + 2x_0x_1x_2^2x_3 + 2x_0^2x_1x_2^2x_3\}
\end{align*}
\end{exam}

\subsubsection{Little Pi Seed.}\label{zooex12}
Let $k, n\in\N$ be constants and $P$ a polynomial in the setup of (12) of Theorem \ref{families}. We show the intermediate polynomials $P_j$ for $j \in\{k, 2k, n-2k, n-k\}$. For the general $P_i$, if $j$ is the largest integer with $j < i$ and $j\in J$, then $P_i$ comes from $i - j$ upshifts to $P_j$.

{\bf Case 1:} If $n > 4k$, so that $k < 2k < n-2k < n-k$, then
\begin{itemize}
\item $P_k = Ax_0x_{2k} + Ax_{2k}x_{n-2k} + x_0x_{3k}x_{n-k} + A^2x_{n-k}$
\item $P_{2k} = x_0x_{3k} + x_k x_{4k} + A^2$
\item $P_{n-2k} = Ax_kx_{n-3k} + Ax_{n-3k}x_{n-k} + x_0x_{n-4k}x_{n-k} + A^2x_0$
\item $P_{n-k} = Ax_0 + Ax_{n-2k} + x_kx_{n-3k}.$
\end{itemize}

{\bf Case 2:} If $n = 4k$, so that $k < 2k = n-2k = 2k < n-k = 3k$, then
\begin{itemize}
\item $P_k = Ax_0x_{2k} + Ax_{2k}^2 + x_0x_{3k}^2 + A^2x_{3k}$
\item $P_{2k} = Ax_k^2 + Ax_kx_{3k} + x_0^2x_{3k} + A^2x_0$
\item $P_{3k} = Ax_0 + Ax_{2k} + x_k^2.$
\end{itemize}

{\bf Case 3:} If $4k > n > 3k$, so that $k < n-2k < 2k < n-k$, then
\begin{itemize}
\item $P_k = Ax_0x_{2k} + Ax_{2k}x_{n-2k} + x_0x_{3k}x_{n-k} + A^2x_{n-k}$
\item $P_{n-2k} = x_0x_{n-3k}x_{n-k} + x_0x_{2n-5k}x_{n-k} + x_{n-3k}x_kx_{2n-4k} + x_{n-3k}x_{n-k}x_{2n-4k} + Ax_0x_{2n-4k}$
\item $P_{2k} = Ax_kx_{5k-n} + Ax_kx_{3k} + x_0x_kx_{3k} + A^2x_{4k-n}$
\item $P_{n-k} = Ax_0 + Ax_{n-2k} + x_kx_{n-3k}.$
\end{itemize}

{\bf Case 4:} If $3k > n > 2k$, so that $n-2k < k < n-k < 2k$, then
\begin{itemize}
\item $P_{n-2k} = x_{2n-4k}x_k + x_{2n-4k}x_{n-k} + x_0x_{2k} + x_0x_{2n-3k}$
\item $P_k = x_0x_{n-k}x_{4k-n} + x_0x_{n-k}x_{2k} + x_0x_{3k-n}x_{2k} +    x_{n-2k}x_{3k-n}x_{2k} + Ax_{3k-n}x_{n-k}$
\item $P_{n-k} = x_{n-2k}x_k + x_{2n-3k}x_{n-2k} + x_0x_{2n-3k} + x_kx_{2n-4k}$
\item $P_{2k} = Ax_{3k-n} + Ax_{k} + x_0x_{4k-n}.$
\end{itemize}

\subsection{Pi Seed.}\label{zooex13}
Let $k, n, a, b\in\N$ be constants and $P$ a polynomial in the setup of (13) in Theorem \ref{families}. We show the intermediate polynomials $P_j$ for $j\in\{k, 2k, n-2k, n-k\}$. We obtain the remaining intermediate polynomials $P_i$ as before. Without loss of generality, assume $a_2\geq a_1$ and $b_1\geq b_2$.

{\bf Case 1:} If $n > 4k$, so that $k < 2k < n-2k < n-k$, then
\begin{itemize}
    \item $P_k = Ax_{2k}^{a_2+b_2}x_{n-2k}^{b_1} + Bx_{2k}^{a_2+b_2}x_{n-2k}^{b_2}x_0^{b_1-b_2} + x_0^{b_1}x_{3k}x_{n-k}$.
    \item $P_{2k} = x_0x_{3k}^{a_2+b_2} + x_k^{a_1+b_1}x_{4k}$.
    \item $P_{n-2k} = Ax_k^{a_1}x_{n-3k}^{a_1+b_1}x_{n-k}^{a_2-a_1} + Bx_k^{a_2}x_{n-3k}^{a_2+b_2}+ x_0x_{n-4k}x_{n-k}^{a_2}$.
    \item $P_{n-k} = Ax_0^{a_1}x_{n-2k}^{b_1} + Bx_0^{a_2}x_{n-2k}^{b_2} + x_kx_{n-3k}$.
\end{itemize}

{\bf Case 2:} If $n = 4k$, so that $k < 2k = n-2k < n-k = 3k$, then

\begin{itemize}
    \item $P_k = Ax_{2k}^{a_2+b_1+b_2}+ Bx_{2k}^{a_2+2b_2}x_0^{a_2-a_1} + x_0^{b_1}x_{3k}^2$.
    \item $P_{2k} = Ax_k^{2a_1+b_1}x_{3k}^{a_2-a_1} + Bx_k^{2a_2 + b_2} + x_0^2x_{3k}^{a_2}$.
    \item $P_{3k} = Ax_0^{a_1}x_{2k}^{b_1} + Bx_0^{a_2}x_{2k}^{b_2} + x_k^2$.
\end{itemize}

{\bf Case 3:} If $4k > n > 3k$, so that $k < n-2k < 2k < n-k$, then

\begin{itemize}
    \item $P_k = Ax_{2k}^{a_2+b_2}x_{n-2k}^{b_1} + Bx_{0}^{a_2-a_1}x_{2k}^{a_2+b_2}x_{n-2k}^{b_2} + x_0^{b_1}x_{3k}x_{n-k}$.
    \item $P_{n-2k} = Ax_0x_{n-3k}^{a_1}x_{2n-5k}^{b_1}x_{n-k}^{a_2} + Bx_0x_{2n-5k}^{b_2}x_{n-3k}^{a_2}x_{n-k}^{a_2} + Ax_{n-3k}^{b_1}x_k^{a_1}x_{2n-4k}x_{n-k}^{a_2-a_1}+ Bx_{n-3k}^{b_1}x_k^{a_2}x_{2n-4k}$.
	\item $P_{2k} = Ax_{3k}^{a_2-a_1}x_{5k-n}^{a_1}x_k^{a_1+b_1} + Bx_k^{a_2+b_2}x_{5k-n}^{a_2} + x_0x_{4k-n}x_{3k}^{a_2}$.
    \item $P_{n-k} = Ax_0^{a_1}x_{n-2k}^{b_1} + Bx_0^{a_2}x_{n-2k}^{b_2} + x_kx_{n-3k}$.
\end{itemize}

{\bf Case 4:} If $3k > n > 2k$, so that $n-2k < k < n-k < 2k$. Two cases will arise; for simplicity, let us only do the case $a_1 \leq b_2$.
\begin{itemize}
    \item $P_{n-2k} = Ax_{2n-4k}x_k^{a_1}x_{n-k}^{b_1-a_1} + Bx_{2n-4k}x_k^{a_2}x_{n-k}^{b_1-a_2} + Ax_0x_{2n-3k}^{b_1} + Bx_0x_{n-k}^{a_2-a_1}x_{2n-3k}^{b_2}$.
	\item $P_k = Ax_0^{b_1}x_{2k}^{a_2-a_1}x_{4k-n}^{a_1}x_{n-k} + Bx_0^{b_1}x_{n-k}x_{4k-n}^{a_2} + Ax_{3k-n}x_{n-2k}^{b_1}x_{2k}^{a_2} + Bx_0^{a_2-a_1}x_{n-2k}^{b_2}x_{2k}^{a_2}x_{3k-n}$.
    \item $P_{n-k} = Ax_0^{a_1}x_{n-2k}^{b_1-a_1}x_{2n-3k} + Bx_0^{a_2}x_{2n-3k}x_{n-2k}^{b_2-a_1} + Ax_k x_{2n-4k}^{b_1} + Bx_{n-2k}^{a_2-a_1}x_{2n-4k}^{b_2}x_{k}$.
    \item $P_{2k} = Ax_{3k-n}^{a_1}x_k^{b_1} + Bx_{3k-n}^{a_2}x_k^{b_2} + x_0x_{4k-n}$.
\end{itemize}

\begin{exam}\label{zooexample13}
When $n=8$, $k=2$ (Case $n = 4k$), $a = 3$ and $b = 2$, the polynomial $P = -2x_2^3x_6^2 + 3x_2^2x_6^3 + x_4^2$ generates the period 1 seed
\begin{align*}
&\{x_0, -2x_2^3x_6^2 + 3x_2^2x_6^3 + x_4^2\},
\{x_1, -2x_3^3x_7^2 + 3x_3^2x_7^3 + x_5^2\},
\{x_2, -2x_0x_4^7 + 3x_4^8 + x_0^3x_6^2\},\\
&\{x_3, -2x_1x_5^7 + 3x_5^8 + x_1^3x_7^2\},
\{x_4, -2x_2^8 + 3x_2^7x_6 + x_0^2x_6^3\},
\{x_5, -2x_3^8 + 3x_3^7x_7 + x_1^2x_7^3\},\\
&\{x_6, -2x_0^3x_4^2 + 3x_0^2x_4^3 + x_2^2\},
\{x_7, -2x_1^3x_5^2 + 3x_1^2x_5^3 + x_3^2\}.
\end{align*}
\end{exam}

\section{Conserved quantities and k-invariants}\label{conserved}

In this section, we examine the integrablity of the sequences generated by some period 1 polynomials. Our general approach, for each sequence, is to find a {\it conserved quantity}, which we will denote by $J$, of the recurrence. A conserved quantity is a rational polynomial function depending on any $n$ consecutive terms of the sequence, i.e., $J_{m, n} = J(x_m, x_{m+1}, \ldots, x_{m+n-1})$ is independent of $m$; in other words $J_{m+1, n} = J_{m, n} = J$. Using this conserved quantity, we {\it multilinearize} the recurrence $\displaystyle x_{m+n} = P(x_{m+1}, \ldots, x_{m+n-1})/x_m$ by writing it in the equivalent form
\[x_{m+n}=L(x_m,x_{m+1},\ldots,x_{m+n-1}),\]
where $L$ is a multilinear polynomial with coefficients in $\Q[x_0,\ldots,x_{n-1}]$.
If $L$ is linear, we say that the recurrence has been {\it linearized}. Notice that when the coefficients of $L$ are all Laurent polynomials in $x_0,\ldots,x_{n-1}$, this multilinearization provides an alternate proof of the Laurent phenomenon for these sequences. In some cases, we find a {\it k-invariant} instead of a conserved quantity. This is a rational polynomial function $J$ such that $J_{m, n} = J(x_m, x_{m+1}, \ldots, x_{m+n-1})$ depends on the residue of $m$ modulo $k$; in other words $J_{m+k}=J_{m,n}$.

Since proving the integrablity of a sequence is technical and involves detailed discussion for each sequence, we will only provide an integrability test analysis for the first sequence discussed and leave the rest to the reader. For the relevant discussion on integrability of sequences, including cluster algebras and poisson geometry, refer to \cite{fo},\cite{ho2} and \cite{gsv}.

\subsection{Special case of symmetric with second powers polynomial}
We obtain a conserved quantity for the recurrence defined by \[ P = \sum_{i=1}^{n-1}{x_i^2} + A\sum_{i=1}^{n-1}{x_i} + B. \]
The recurrence at indices $m+n$ and $m+n+1$ are:
\begin{eqnarray*}
x_{m+n}x_{n} &=& \sum_{i=1}^{n-1}{x_{m+i}^2} + A\sum_{i=1}^{n-1}{x_{m+i}} + B\\
x_{m+n+1}x_{n+1} &=& \sum_{i=1}^{n-1}{x_{m+i+1}^2} + A\sum_{i=1}^{n-1}{x_{m+i+1}} + B
\end{eqnarray*}
After subtracting the former from the latter and rearranging, we obtain
\begin{align*}
\frac{x_{m+1} + x_{m+n+1} + A}{\prod_{i=1}^{n-1}{x_{m+i+1}}}=\frac{x_m + x_{m+n} + A}{\prod_{i=1}^{n-1}{x_{m+i}}}.
\end{align*}
Therefore $\displaystyle\frac{x_m + x_{m+n} + A}{\prod_{i=1}^{n-1}{x_{m+i}}}$ is a conserved quantity for our recurrence that we will write as $J_{m,n}$. By multiplying the numerator and denominator by $x_m$, and using $x_{m+n}x_m = A\sum_{i=1}^{n-1}{x_i} + B$ in the numerator, we see that $J_{m, n}$ can be written as \[ J_{m, n} = \frac{\sum_{i=0}^{n-1}{x_{m+i}^2} + A\cdot\sum_{i=0}^{n-1}{x_{m+i}} + B}{\prod_{i=0}^{n-1}{x_{m+i}}}. \] Then we have
$$ J_{m+1,n} = J_{m,n} = J = \frac{\sum_{i=0}^{n-1}{x_{i}^2} + A\cdot\sum_{i=0}^{n-1}{x_{i}} + B}{\prod_{i=0}^{n-1}{x_{i}}},$$
and the mutilinear recurrence
\begin{eqnarray}\label{linrec1}
x_{m+n}=J\cdot\prod_{i=1}^{n-1}x_{m+i}-x_{m}-A, \qquad m=0,1,2\ldots
\end{eqnarray}
We show that the recurrence generated by $P$ passes the singularity confinement test described in \cite{ho}. Assume we had a singularity at $x_{m+n}$, i.e., $x_{m+n} = \epsilon \rightarrow 0$. Then we have $\epsilon x_m = P(x_{m+1},x_{m+2},\ldots x_{m+n-1})=O(\epsilon)$. From (\ref{linrec1}), $x_{m+n}=J\cdot\prod_{i=1}^{n-1}x_{m+i}-x_m-A$, we can show inductively that $x_{n+m+i}=-x_{n+i}-A+O(\epsilon)$ for $i=1,\ldots, m-1$. It is therefore clear that $x_{n+2m}=O(1)$, that is, the singularity is confined. It is interesting to observe that, even though the sequence passes this singularity confinement test, it is not Diophantine integrable, as shown in \cite{ho}.

Another interesting fact is that the quadratic Diophantine equation
$$\sum_{i=0}^{n-1}{x_i^2} + A\cdot\sum_{i=0}^{n-1}{x_i} + B = \left(n(1+A) + B\right)\cdot\prod_{i=0}^{n-1}{x_i}$$
has infinitely many integer solutions $(x_0, x_1, \ldots, x_{n-1})$ that consist of the $n$-tuples $(y_m, y_{m+1}, \ldots, y_{m+n-1})$, where the sequence $(y_m)_{m=0}^{\infty}$ is defined as $y_0 = y_1 = \ldots y_{n-1} = 1$ and $y_{m+n} = F(y_{m+1}, \ldots, y_{m+n-1})/y_m$ for all $m\geq 0$.

\subsection{$r$-Jumping polynomial}
A conserved quantity for the $r$-Jumping polynomial
\[ P = \sum_{i = 0}^{\frac{n-1}{r} - 1}{x_{ri + 1}\cdot x_{ri + r}} + A, \] when $n \geq 2r+1$ and $n\equiv 1 \pmod{r}$, is
$$J_{m, n} = \frac{x_{m+1} + x_{m+n+r}}{\prod_{i=r+1}^n{x_{m+i}}}.$$
We then have
\begin{align*}
J_{m+1,n}=J_{m,n}=J=\frac{x_1+x_{n+r}}{\prod_{i=r+1}^{n}x_i},
\end{align*}
as well as the multilinear recurrence
\begin{align*}
x_{m+n+r}=J\cdot\prod_{i=r+1}^{n}x_{m+i}-x_{m+1},\qquad m=0,1,2\ldots
\end{align*}

\subsection{Special case of sink-type binomial}
This is the first example of polynomial for which we find a k-invariant instead of a conserved quantity. For the polynomial $\displaystyle P = x_kx_{n-k} + 1$,
where $0 < k < n$, there is a $(n-k)$-invariant which is
$$J_{m, n} = \frac{x_m + x_{m+2k}}{x_{m+k}}.$$
We then have
$$J_{m+n-k,n} = J_{m,n}.$$
The quantity $J_{m, n}$ will depend on the residue of $m$ modulo $n-k$; more specifically:
$$J_{m, n} = J_i = \frac{x_i + x_{i+2k}}{x_{i+k}} \textrm{ if } m\equiv i \pmod{n-k} \textrm{ and } 0 \leq i < n-k; $$
moreover, we obtain the linear recurrence
$$x_{m+2k}=J_{(m \textrm{ mod } n-k)}x_{m+k} - x_m,\qquad m = 0,1,2\ldots$$
This recurrence is thoroughly discussed in \cite{fm}, where it is shown that the sequence is given by its initial values and a recurrence $x_{m+n} = G(x_m, x_{m+1}, \ldots, x_{m+n-1})$ for a linear function $G$. Moreover, it is shown there that the sequence is {\it complete integrable}.

\subsection{Extreme polynomial}
A $(n-1)$-invariant for \[ P = x_1x_{n-1}+A\cdot\sum_{i=1}^{n-1} +B \] is
$$ J_{m,n}=\frac{x_{m+2}+x_{m}+A}{x_{m+1}}.$$
We then have
$$ J_{m+n-1,n}=J_{m,n}. $$
The quantity $J_{m, n}$ will depend on the residue of $m$ modulo $n-1$; more specifically:
$$J_{m, n} = J_{i}=\frac{x_{i+2}+x_i+A}{x_{i+1}} \textrm{ if } m\equiv i\pmod{n-1} \textrm{ and } 0 \leq i < n-1;$$
moreover, we obtain the linear recurrence
$$ x_{m+2} = J_{(m \textrm{ mod }n-1)}x_{m+1} - x_m - A,\qquad m= 0,1,2\ldots$$

\subsection{Chain polynomial}
A $2$-invariant for \[ P = \sum_{i=1}^{n-2}x_ix_{i+1} + A\cdot\sum_{i=1}^{n-1}x_i +B, \] when $n$ is odd, is
$$ J_{m,n}=\frac{x_{m+n-1}+x_{m}+A}{\prod_{i=0}^{\frac{n-3}{2}}x_{m+2i+1}}.$$
We then have
\begin{align*} J_{m+2,n}&=J_{m,n}=J_0=\frac{x_{n-1}+x_{0}+A}{\prod_{i=0}^{\frac{n-3}{2}}x_{2i+1}},\qquad 2\mid m\\
J_{m+2,n}&=J_{m,n}=
J_1=\frac{\sum_{i=1}^{n-2}x_ix_{i+1}+A\sum_{i=1}^{n-1}x_i+x_{1}+A+B}
{\prod_{i=0}^{\frac{n-3}{2}}x_{2i+2}},\qquad 2\nmid m
\end{align*}
and the mutilinear recurrence
\begin{align*}
x_{m+n-1}=J_{\langle m \text{ mod } 2\rangle}\cdot\prod_{i=0}^{\frac{n-3}{2}}x_{m+2i+1}-x_m-A,\qquad m=0,1,2\ldots
\end{align*}

\subsection{Multilinear symmetric polynomial}
A $2$-invariant for \[ P = \sum_{1\leq i<j\leq n-1}x_ix_{j}+A\sum_{i=1}^{n-1}x_i +B, \] when $n$ is odd, is
$$J_{m, n} = \frac{\sum_{i=0}^{n-1}{x_{m+i}} + A}{\prod_{i=0}^{\frac{n-3}{2}}{x_{m+2i+1}}}.$$
We then have
\begin{align*}
J_{m+2,n}&=J_{m,n}=J_0=\frac{\sum_{i=0}^{n-1}{x_i} + A}{\prod_{i=0}^{\frac{n-3}{2}}{x_{2i+1}}}, \qquad 2\mid m \\
J_{m+2,n}&=J_{m,n}=J_1=\frac{\sum_{0\leq i<j\leq n-1}x_ix_j + A\sum_{i=0}^{n-1}x_i + B}{\prod_{i=1}^{\frac{n-1}{2}}{x_{2i}}},\qquad 2\nmid m.
\end{align*}
and the mutilinear recurrence
\begin{align*}
x_{m+n-1}=J_{\langle m \text{ mod } 2\rangle}\cdot\prod_{i=0}^{\frac{n-3}{2}}x_{m+2i+1}-\sum_{i=0}^{n-2}x_m-A,\qquad m=0,1,2,3\ldots
\end{align*}

\section{Acknowledgements}
This research was conducted at the 2013 summer REU (Research Experience for Undergraduates) program at the University of Minnesota, Twin Cities, and was supported by NSF grants DMS-1067183 and DMS-1148634. We would like to thank Professors Dennis Stanton, Gregg Musiker, Joel Lewis, and especially our mentor Pavlo Pylyavskyy, who directed the program, for their advice and support throughout this project. We would also like to thank Al Garver, Andrew Hone, and other REU participants for their help in editing this paper.

\bibliographystyle{plain}

\end{document}